\title
{Sesqui-type branching processes}
\author{Svante Janson%
\thanks{Department of Mathematics, Uppsala University, 
PO Box 480, SE-751~06 Uppsala, Sweden. 
E-mail: {\tt svante.janson@math.uu.se}. 
Partly supported by the Knut and Alice Wallenberg Foundation.}
\ and Oliver Riordan%
\thanks{
Mathematical Institute, University of Oxford, Radcliffe Observatory Quarter, Woodstock Road, Oxford OX2\thinspace6GG, UK.
E-mail: {\tt riordan@maths.ox.ac.uk}.}
\ and Lutz Warnke%
\thanks{
School of Mathematics, Georgia Institute of Technology, Atlanta GA~30332, USA;  
{\em and\/}
Peterhouse, Cambridge CB2\thinspace1RD, UK.
E-mail: {\tt warnke@math.gatech.edu}.}}
\date{June 24, 2017}
\numberwithin{equation}{section}
\renewcommand\le{\leqslant}
\renewcommand\ge{\geqslant}
\theoremstyle{plain}
\newtheorem{theorem}{Theorem}[section]
\newtheorem{lemma}[theorem]{Lemma}
\theoremstyle{definition}
\newtheorem{definition}[theorem]{Definition}
\newtheorem{remark}[theorem]{Remark}
\theoremstyle{remark}
\newenvironment{romenumerate}[1][-10pt]{
\addtolength{\leftmargini}{#1}\begin{enumerate}
 }{\end{enumerate}}
\newcounter{oldenumi}
{\setcounter{oldenumi}{\value{enumi}}
\begin{romenumerate} \setcounter{enumi}{\value{oldenumi}}}
{\end{romenumerate}}
\newcounter{thmenumerate}
\newcounter{xenumerate}   
\newcommand\pfitemx[1]{\par#1:}
\newcommand\pfitemref[1]{\pfitemx{\ref{#1}}}
\newcommand{\refT}[1]{Theorem~\ref{#1}}
\newcommand{\refL}[1]{Lemma~\ref{#1}}
\newcommand{\refS}[1]{Section~\ref{#1}}
\newcommand{\refD}[1]{Definition~\ref{#1}}
\xdef\klockan{\the\count1.0\the\count255}
\xdef\klockan{\the\count1.\the\count255}\fi
\newcommand{\sumno}{\sum_{n=0}^\infty}
\newcommand\set[1]{\ensuremath{\{#1\}}}
\newcommand\xpar[1]{(#1)}
\newcommand\bigpar[1]{\bigl(#1\bigr)}
\newcommand\Bigpar[1]{\Bigl(#1\Bigr)}
\newcommand\biggpar[1]{\biggl(#1\biggr)}
\newcommand\lrpar[1]{\left(#1\right)}
\newcommand\xcpar[1]{\{#1\}}
\newcommand\abs[1]{|#1|}
\newcommand\bigabs[1]{\bigl|#1\bigr|}
\newcommand\Bigabs[1]{\Bigl|#1\Bigr|}
\def\rompar(#1){\textup(#1\textup)}    
\newcommand\parfrac[2]{\lrpar{\frac{#1}{#2}}}
\def\xexp(#1){e^{#1}}
\newcommand\ntoo{\ensuremath{{n\to\infty}}}
\newcommand\norm[1]{\|#1\|}
\newcommand\bignorm[1]{\bigl\|#1\bigr\|}
\newcommand\Bignorm[1]{\Bigl\|#1\Bigr\|}
\newcommand\downto{\searrow}
\newcommand\punkt{.\spacefactor=1000}    
\newcommand\ie{i.e\punkt}
\newcommand\eg{e.g\punkt}
\newcommand\cf{cf\punkt}
\newcommand\ii{\mathrm{i}}
\newcommand\bbR{\mathbb R}
\newcommand\bbC{\mathbb C}
\newcommand\bbN{\mathbb N}
\newcommand\bbZ{\mathbb Z}
\newcounter{CC}
\newcommand{\CC}{\stepcounter{CC}\CCx} 
\newcommand{\CCx}{C_{\arabic{CC}}}     
\newcommand{\CCdef}[1]{\xdef#1{\CCx}}     
\newcommand{\CCname}[1]{\CC\CCdef{#1}}    
\newcounter{cc}
\newcommand{\cc}{\stepcounter{cc}\ccx} 
\newcommand{\ccx}{c_{\arabic{cc}}}     
\newcommand{\ccdef}[1]{\xdef#1{\ccx}}     
\newcommand{\ccname}[1]{\cc\ccdef{#1}}    
\renewcommand\Re{\operatorname{Re}}
\newcommand\E{\operatorname{\mathbb E{}}}
\renewcommand\P{\operatorname{\mathbb P{}}}
\newcommand\Det{\operatorname{Det}}
\newcommand\sign{\operatorname{sign}} 
\newcommand\tr{\mathrm{tr}}
\newcommand\ga{\alpha}
\newcommand\gb{\beta}
\newcommand\gd{\delta}
\newcommand\gf{\varphi}
\newcommand\gam{\gamma}
\newcommand\gl{\lambda}
\newcommand\eps{\varepsilon}
\renewcommand\phi{\xxx}  
\newcommand\cD{\mathcal D}
\newcommand\cK{\mathcal K}
\newcommand\cN{\mathcal N}
\newcommand\cR{{\mathcal R}}
\newcommand\qw{^{-1}}
\newcommand\qww{^{-2}}
\newcommand\qqw{^{-1/2}}
\newcommand\dd{\,\mathrm{d}}
\newcommand\ddx{\mathrm{d}}
\newcommand\pddd[1]{\frac{\partial}{\partial#1}}
\newcommand{\pgf}{probability generating function}
\newcommand{\mgf}{moment generating function}
\newcommand{\chf}{characteristic function}
\newcommand\rhs{right-hand side}
\newcommand\bp{\mathfrak X}
\newcommand\bpil{\bp^{1,L}}
\newcommand\bpis{\bp^{1,S}}
\newcommand\xs{^S}
\newcommand\xl{^L}
\newcommand\tg{\tilde g}
\newcommand\tf{\tilde f}
\newcommand\intpipi{\int_{-\pi}^\pi}
\newcommand\kk{\cK}
\newcommand\kko{\kk^0}
\newcommand\kki{\kk^1}
\newcommand\bm{m}
\newcommand\an{n}
\newcommand\xm{^{(m)}}
\newcommand\gabuv{\ga+\ii u,\gb+\ii v}
\newcommand\nx{n^{-0.4}}
\newcommand\Nd{d}
\newcommand\xx{x^*}
\newcommand\yy{y^*}
\newcommand\xgagb{{(\ga,\gb)}}
\newcommand\xxuv{[u,v]}
\newcommand\Psix{\widehat\Psi}
\newcommand\qN{\P(|\bp|=N)}
\newcommand\ddy{\frac{\partial}{\partial y}}
\newcommand\pddu{\frac{\partial}{\partial u}}
\newcommand\ddt{\frac{\ddx}{\ddx t}}
\newcommand\ddu{\frac{\ddx}{\ddx u}}
\newcommand\SX{T}
\newcommand\bY{\bar Y}
\newcommand\bZ{\bar Z}
\newcommand\tc{t_{\mathrm{c}}}
\newcommand\bb[1]{\bigl(#1\bigr)}
\newcommand\bg{\bar g}
\newcommand\rhox{\sigma} 
\newcommand\YYZZ{{Y,Z,Y^0,Z^0}}
\newcommand\YY{{Y,Y^0}}
\newcommand\YZ{{Y,Z}}
\newcommand\trho{\tilde\rho}
\newcommand\hrho{\hat\rho}
\newcommand\rhoY{\rho_Y}
\newcommand\rhoYt{\rho_{Y_t}}
\newcommand\rhoYY{\rho_{\YY}}
\newcommand\rhoYYt{\rho\YYt}
\newcommand\Yu{_{Y_u}}
\newcommand\Yt{_{Y_t}}
\newcommand\YYu{_{Y_u,Y^0_u}}
\newcommand\YYt{_{Y_t,Y^0_t}}
\newcommand\hG{\widehat G}
\newcommand\CM{M}
\newcommand{\indic}[1]{\mathbbm{1}_{\{{#1}\}}}
\newcommand\uvrow{(u\ v)}
\newcommand\uvcol{\begin{pmatrix}u\\v\end{pmatrix}}
\newcommand\Bd{B^d}
\newcommand\Bt{B^2}
\let\OLDthebibliography\thebibliography
\renewcommand\thebibliography[1]{
  \OLDthebibliography{#1}
  \setlength{\parskip}{0pt}
  \setlength{\itemsep}{0pt plus 0.3ex}
}
\begin{document}

\renewcommand{\thefootnote}{\fnsymbol{footnote}}
\footnotetext{\hspace{-0.5em}AMS 2010 Mathematics Subject Classification: 60J80}
\renewcommand{\thefootnote}{\arabic{footnote}} 

\maketitle

\begin{abstract} 
We consider branching processes consisting of particles (individuals) of two types (type~$L$ and type~$S$)
in which only particles of type~$L$ have offspring, proving estimates for the survival probability
and the (tail of) the distribution of the total number of particles. Such processes are in some
sense closer to single- than to multi-type branching processes. Nonetheless, the second, barren,
type complicates the analysis significantly. The results proved here 
(about point and survival probabilities) are a key ingredient
in the analysis of bounded-size Achlioptas processes in a recent paper by the last two authors. 
\end{abstract}

\section{Introduction}

Throughout the paper we consider branching processes in which every particle is of one of two \emph{types},
called (for compatibility with the notation in~\cite{RW}), `type $L$' and `type $S$'. Particles of type $S$ may
be thought of as barren: they have no children. Each particle of type $L$ will have some random number 
of children of each type; as usual, we have independence between the children of different particles,
but the numbers $Y$ and $Z$ of type-$L$ and type-$S$ children of one particle need \emph{not} be independent.
The formal definition is as follows.

\begin{definition}\label{def:bp}
Let $(Y,Z)$ and $(Y^0,Z^0)$ be probability distributions on $\bbN^2$. We write
$\bp^1=\bp^1_{Y,Z}$  
for the Galton--Watson branching process started with a single particle of type~$L$, in which each particle of type~$L$ has $Y$ children of type~$L$ and $Z$
of type~$S$. Particles of type~$S$ have no children, and the children of different particles are independent. 
We write
$\bp=\bp_{Y,Z,Y^0,Z^0}$ for the branching process defined as follows: 
start in generation one with $Y^0$ particles of type~$L$ and $Z^0$ of type~$S$.
Those of type~$L$ have children according to~$\bp^1_{Y,Z}$, independently of each other
and of the first generation. Those of type~$S$ have no children.
We write~$|\bp|$ ($|\bp^1|$) for the total number particles in~$\bp$ $(\bp^1)$. 
\end{definition}

These branching processes are in some sense
essentially single-type: one could first generate the tree of type-$L$ particles as a classical single-type
Galton--Watson process, and then consider particles of type $S$. However, since the numbers of type-$S$ and type-$L$
children are not necessarily independent, this two-stage description does not seem particularly easy to work with.

The motivation for considering such processes (and in particular for allowing a different rule for the first generation)
comes from the application to studying the phase transition in Achlioptas processes in~\cite{RW}. 
Achlioptas processes are
evolving random graph models that have received considerable attention 
(see, e.g., \cite{DRS,RWep,BK,SW,JS,RWapcont,KPS,BBW12b,RWapsubcr} and the references therein). 
We shall say nothing further about 
 these random graph processes here,
aiming to keep the paper self-contained, and purely
about branching processes.

We shall prove two main results. 
Firstly, in Section~\ref{Smain}, we consider an individual branching process
of the type above, giving an asymptotic formula
for the point probability $p_N=\P(|\bp|=N)$ under certain conditions on the distributions $(Y,Z)$ and $(Y^0,Z^0)$.
This formula is proved in Sections~\ref{Sint}--\ref{Ssum}, which are the heart of the paper.
Then, in Section~\ref{Svary}, we consider families of processes where the offspring distribution
varies analytically in an additional parameter $t$. Roughly speaking, we show that the key quantities
in the formula in Section~\ref{Smain} then vary analytically in $t$. This result (which in particular
implies properties of the near-critical case) is needed in~\cite{RW}.
Finally, in Section~\ref{Ssurvival}, we prove corresponding results for the survival probability~$\P(|\bp|=\infty)$.
Here the barren type plays no role, so the results effectively concern single-type processes
and are much simpler.  

\begin{remark}
Although the definition of sesqui-type branching processes is adapted to the
application in~\cite{RW}, the results here are applicable, at least in principle, to a more
general class of branching processes. Consider a finite-type Galton--Watson process in
which there is one special type (type $L$), and all other types are `doomed' (lead to
finite trees of descendants a.s.). Such a process may be transformed into a sesqui-type
process in a natural way: for each type-$L$ particle replace its children of all doomed
types, and their (necessarily doomed) descendents, by type-$S$ children (keeping the same
total number of particles). For our results
to apply to the transformed process we need further conditions, roughly speaking that the
`doomed' subtrees are not too close to critical; but in outline, all processes with (at most)
one type that can potentially survive are covered. 
Branching processes of this type (with one doomed type) have been studied 
by several authors, giving various results different from ours; see for 
example~\cite{Sevastyanov, Vjugin, DrmotaVatutin}. 
\end{remark}

\subsection{Some notation and conventions}\label{Snot}

Throughout we write $\bbN:=\set{0,1,2,\dots}$ for the non-negative integers.

Given a two-dimensional random variable $(Y,Z)$ taking values in $\bbN^2$, we
denote its bivariate \pgf{} by
\begin{equation}
  g_{Y,Z}(y,z):=\E (y^Yz^Z) = \sum_{k,l\ge0}\P(Y=k,Z=l)y^kz^l,
\end{equation}
for all complex $y$ and $z$ such that the expectation (or sum) converges
absolutely. We will also consider the bivariate \mgf{}
\begin{equation}
f_{Y,Z}(y,z):=g_{Y,Z}\bigpar{e^y,e^z} = \E\bigpar{e^{yY+zZ}} .
\end{equation}
When considering a particular branching process as in Definition~\ref{def:bp}, 
we often write $g=g_{Y,Z}$ and $f=f_{Y,Z}$ for brevity.

We denote the coefficient of $y^kz^l$ in a power series $G(y,z)$ by
$[y^kz^l]G(y,z)$. 

We say that a function $f$ defined on $I\subseteq \bbC$ is analytic if for every $x_0\in I$ 
there is an $r>0$ and a power series $g(x)=\sum_{j\ge 0} a_j (x-x_0)^j$ with radius of convergence at least $r$ such
that~$f$ and~$g$ coincide on $(x_0-r,x_0+r)\cap I$. 
A function~$f$ defined on some domain including~$I$ is analytic on~$I$ if $f|_I$ is analytic. 
The definitions for functions of several 
real or complex variables are analogous.

If $f$ is an analytic function of $\Nd$ variables, defined in an open set 
$U\subseteq \bbC^\Nd$, we denote its derivative by~$Df$, and
its $m$th derivative by~$D^mf$. Note that~$D^m f$ is an analytic function
from $U$ to the linear space of all (symmetric)
$m$-linear forms $\bbC^\Nd\to\bbC$.
In particular, for each $z\in U$, 
$Df(z)$ is a linear form, which can also be regarded
as a vector (the usual gradient);
we write $D_if:=\frac{\partial}{\partial x_i}f$, so
$Df(z)=\bigpar{D_1f(z),\dots,D_\Nd f(z)}$.
Similarly,
$D^2f(z)$ is a bilinear form, which may be
regarded as a $\Nd\times \Nd$ matrix
with entries $D_{ij}f(z)$, where~$D_{ij}=D_iD_j$.
We denote its determinant by $\Det(D^2f(z))$.
(This is known as the Hessian of $f$.)

For a vector $x\in\bbC^\Nd$, let $D^mf(z)[x]$ denote $D^mf(z)(x,\dots,x)$, where the
vector $x$ is repeated $m$ times. 
When using coordinates $x=(u,v)$ in the case $d=2$, we  write $[u,v]$ for
$[(u,v)]$,
so, regarding $D^2f$ as a matrix and $x$ as a (column) vector, we have
\begin{equation}\label{Dmf}
 D^2f(z)[u,v] =  \uvrow \: D^2f(z) \: \uvcol .
\end{equation}

We denote the usual Euclidean norm of vectors by $|\cdot|$. For operators
and the multilinear forms $D^mf$ we use $\norm\cdot$ for the usual norm (any
other norm would do as well).

For real symmetric matrices, $A\le B$ means that $B-A$ is 
positive definite, i.e., that $v^\tr(B-A)v\ge 0$ for all real vectors $v$.
In particular, if $A$ is a $d\times d$ symmetric matrix and $c\in\bbR$, then
\begin{equation}\label{rM}
  A\ge cI \iff
v^\tr Av\ge c|v|^2 \text{ for all $v\in\bbR^\Nd$}.
\end{equation}

\begin{remark}
We adopt the following notational convention regarding constants. $c$ and $C$ are used `locally' 
(within a single proof), while numbered constants $c_1$, $C_1$ etc retain their meaning throughout 
the paper. The constants $c_i$, which are numbered in the order they are introduced, obey the inequalities 
\[
 c_1   \le c_8   \le c_7 \le c_6 \le c_5  \quad\text{and}\quad c_4\le c_2
\]
We write $y$, $z$, $w$ for complex variables, and $u$, $v$, $\alpha$, $\beta$ for real variables.
All constants $c_i$, $C_i$ etc are positive.
\end{remark}

\section{Point probabilities of a single branching process}\label{Smain}

In this section we study the point probabilities $\qN$ of the branching process $\bp=\bp_{Y,Z,Y^0,Z^0}$ from \refD{def:bp}. 
To formulate our main result we need some further definitions (which encapsulate fairly mild and natural conditions for the offspring distributions). 

\begin{definition}\label{def:kk}
Suppose that $R>1$, $\CM<\infty$, $k_1,k_2\in\bbN$ and $\delta>0$. 
  \begin{romenumerate}
  \item   
Let $\kko = \kko(R,\CM,\delta)$ be the set of probability distributions $\nu$ on $\bbN^2$
such that if $(Y,Z)\sim\nu$, then
\begin{align}
\label{RYZ}
 \E R^{Y+Z} & \le \CM, \\
\label{ey0}
\E Y & \ge\gd.  
\end{align}
\item 
Let 
 $\kki = \kki(k_1,k_2,\delta)$ be the set of probability distributions
 $\nu=(\pi_{i,j})_{i,j\ge0}$ on $\bbN^2$ 
such that 
\begin{equation}
  \label{a2}
\pi_{k_1,k_2}\ge\delta,\qquad 
\pi_{k_1+1,k_2}\ge\delta, \qquad
\pi_{k_1,k_2+1}\ge\delta.
\end{equation}
\item 
  Let $\kk = \kk(R,\CM,k_1,k_2,\delta):=\kko(R,\CM,\delta)\cap\kki(k_1,k_2,\delta)$.
  \end{romenumerate}
\end{definition}

We write $(Y,Z)\in\kko$ if the distribution of $(Y,Z)$ is in $\kko$, and
similarly for $\kki$ and $\kk$. 
The key condition here is the (uniform) bound \eqref{RYZ} on the 
probability generating functions. 
The condition~\eqref{a2} is needed, roughly speaking,
to ensure that $(Y,Z)$ is not essentially supported on a sublattice of $\bbN^2$. 
Note that $(Y,Z)\in \kki$ trivially implies
\begin{equation}\label{ez0}
 \E Z \ge \P(Z=k_2+1) \ge \gd,
\end{equation}
and similarly $\E Y \ge \gd$. 

The following theorem gives the qualitative behaviour of the size--$N$ point probabilities 
of the branching process $\bp=\bp_{Y,Z,Y^0,Z^0}$ from \refD{def:bp}. 
The statement of \refT{T1} is not self contained since the parameters $\Psi$, $\Phi$ and $\xx$ are defined 
(in a rather involved way) from the generating functions of $(Y,Z)$ and $(Y^0,Z^0)$, 
see \eqref{Psi}--\eqref{Phi} and \refL{LF} in Section~\ref{Ssum}. 
A key feature of the result is that the estimates and error-terms are uniform over all 
distributions $(Y^0,Z^0) \in \kko $ and $(Y,Z) \in \kk$, i.e., the explicit and implicit 
constants depend only on $R,\CM,k_1,k_2$ and $\delta$. 
Note that, from \eqref{xix} below, 
$\xi=0$ if and only if $\E Y = 1$, 
and that $\qN$ decays exponentially in~$\Theta(\eps^2 N)$
in the near-critical case $\E Y = 1 \pm \eps$.

\begin{theorem}[Point probabilities of $\bp$]\label{T1}
Suppose that $R>1$, $\CM<\infty$, $k_1,k_2 \in \bbN$, and $\delta>0$. 
Writing $\kko = \kko(R,\CM,\delta)$ and $\kk = \kk(R,\CM,k_1,k_2,\delta)$, 
there exists a constant $\ccname\ccT>0$ such that if
$(Y^0,Z^0)\in \kko$, $(Y,Z)\in\kk$, and $|\E Y-1|\le\ccT$, then 
for all $N \ge 1$ we have 
\begin{equation}\label{t1}
  \qN = N^{-3/2} e^{-N\xi}\bigpar{\theta+ O(N\qw)},
\end{equation}  
where, defining $\Psi$ and $\Phi$ as in \eqref{Psi}--\eqref{Phi} and $\xx$ as in \refL{LF}, we have 
\begin{gather}\label{xi}
  \xi=\xi_{Y,Z} :=-\Psi(\xx)\ge0, \\
\label{theta}
  \theta=\theta_{Y^0,Z^0,Y,Z} :=\sqrt{2\pi / |\Psi''(\xx)|}\ \Phi(\xx) = \Theta(1),
\end{gather}
and
\begin{equation}
\label{xix}
  \xi = \Theta(|\E Y-1|^2).
\end{equation}
Moreover, the implicit constants in~\eqref{t1}--\eqref{xix} depend only on $R,\CM,k_1,k_2$ and $\delta$. 
\end{theorem}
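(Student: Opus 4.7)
The strategy is two-stage saddle-point/Laplace analysis. In the first stage (\refS{Sint}), I would start from a cycle-lemma decomposition: conditioning on there being exactly $m$ type-$L$ particles in $\bp$ (and applying the cycle lemma to the sequence of $(Y_i, Z_i)$ along a depth-first exploration of the type-$L$ forest), one obtains
\begin{equation*}
p_N = \sum_{m=1}^{N} \frac{1}{m}\E\Bigsqpar{Y^0 \cdot \P\bigpar{\textstyle\sum_{i=1}^m Y_i = m - Y^0,\ \sum_{i=1}^m Z_i = N - Z^0 - m \mid Y^0, Z^0}},
\end{equation*}
plus a boundary term from $Y^0 = 0$ (exponentially small by \eqref{RYZ}). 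Each bivariate point probability inside is then estimated by 2-dimensional Fourier inversion of $g_{Y,Z}^m$ on the torus, followed by a shift of contour to a real saddle $(\alpha, \beta)$ of $\log f_{Y,Z}$ and Laplace expansion in the Fourier variables near the origin. The moment bound \eqref{RYZ} justifies the contour shift by giving analyticity of $g_{Y,Z}$, while the aperiodicity condition \eqref{a2} provides uniform exponential decay of the integrand away from the saddle. The outcome is an estimate of the form
\begin{equation*}
\P\bigpar{\cdots} = \frac{A_m}{m}\cdot e^{m r_m}\cdot \bigpar{1 + O(m\qw)}
\end{equation*}
with a smooth prefactor $A_m$ and exponential rate $r_m$, uniformly over the allowed parameters.

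In the second stage (\refS{Ssum}), the sum over $m$ is treated by Laplace's method. Writing $m = tN$ and collecting the exponential rate as a function of $t$, the dominant contribution comes from the one-dimensional saddle $t = \xx$ identified in \refL{LF}, at which the combined rate function $\Psi$ attains its maximum. The function $\Phi$ collects the $(Y^0, Z^0)$-generating-function value at the saddle together with the prefactor $A_m$ from the Fourier step. Laplace's method yields an additional factor $\sqrt{2\pi/(N|\Psi''(\xx)|)}$; combined with the $1/m$ from the cycle-lemma step this produces the overall $N^{-3/2}$ order. The constants in the theorem are then $\xi = -\Psi(\xx)$ and $\theta = \sqrt{2\pi/|\Psi''(\xx)|}\,\Phi(\xx)$, with the stated error $O(N\qw)$ coming from both the Fourier Laplace remainder and the one-dimensional Laplace remainder.

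For the near-critical estimate $\xi = \Theta(|\E Y - 1|^2)$: at criticality $\E Y = 1$ the saddle lies at a point where $\Psi(\xx) = 0$ and the saddle condition $\Psi'(\xx) = 0$ holds. Perturbing the offspring distribution by $\eps = \E Y - 1$, the first-order change in $\Psi(\xx)$ vanishes by the envelope/saddle condition, while the second-order coefficient is uniformly nonzero by non-degeneracy of $\Psi''$ (guaranteed by \eqref{a2} and \eqref{ey0}), yielding $\Psi(\xx) = -\Theta(\eps^2)$ hence the claim.

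The main obstacle is uniform control over the class $\kk$. One must show that the saddle location $\xx$, the curvature $|\Psi''(\xx)|$, and the prefactor $\Phi(\xx)$ stay bounded away from $0$ and $\infty$ using only the parameters $R, \CM, k_1, k_2, \delta$; and one must show that the tails of both the bivariate Fourier integral (outside a fixed neighborhood of $(\alpha, \beta)$) and the sum over $m$ (outside a fixed neighborhood of $\xx N$) decay at a uniform exponential rate. The conditions in \refD{def:kk} are tailored exactly for this: \eqref{RYZ} gives analytic growth control of $g_{Y,Z}$, \eqref{ey0} keeps the $L$-process non-degenerate, and the three-atom condition \eqref{a2} both rules out sublattice support (ensuring strict aperiodicity) and forces uniform non-vanishing of $\Psi''$. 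The near-critical restriction $|\E Y - 1| \le \ccT$ ensures that the saddle $\xx$ stays in the good regime where $h(w) = w g(h(w), w)$ admits a controlled solution throughout, so that all uniform bounds hold simultaneously.
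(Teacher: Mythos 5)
Your proposal is essentially the paper's proof: the two-type Otter–Dwass/cycle-lemma formula to write $N$-point probabilities as a sum of local bivariate probabilities, 2D Fourier inversion with a tilt to a real saddle $(\alpha,\beta)$ plus Laplace expansion (using \eqref{RYZ} for analyticity and \eqref{a2} for uniform decay off the saddle), and then a second, one-dimensional Laplace over the type-$L$ count near $n\approx \xx N$. One small remark: your reasoning that $\xi=\Theta(|\E Y-1|^2)$ ``by the envelope/saddle condition'' needs slight care since $\xi$ depends on the whole law of $(Y,Z)$, not just on $\E Y$; the paper handles this by Taylor-expanding a two-variable lift $\widehat\Psi(w,x)=x\psi(G(w,x-x_0))$ around $(0,x_0)$, observing that $\widehat\Psi$ and $D\widehat\Psi$ both vanish there while $D^2\widehat\Psi$ is uniformly negative-definite, which is what makes the quadratic order, and the uniformity over $\kk$, precise.
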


The remainder of this section is devoted to the proof of \refT{T1}. 
To this end we fix $R>1$, $\CM<\infty$, $k_1,k_2 \in \bbN$, and  $\delta>0$, 
and write $\kko = \kko(R,\CM,\delta)$ and $\kk = \kk(R,\CM,k_1,k_2,\delta)$ 
to avoid clutter. 
Let~$|\bp\xl|$ and~$|\bp\xs|$ denote the total numbers of type-$L$ and 
type-$S$ particles in $\bp$, so $|\bp|=|\bp\xl|+|\bp\xs|$, and set 
\begin{equation}\label{pab}
  \begin{split}
p_{\an,\bm}:=\P(|\bp\xl|=\an,\,|\bp\xs|=\bm).
  \end{split}
\end{equation}
Of course, $p_{\an,\bm}$ depends on the distributions of $(Y,Z)$ and $(Y^0,Z^0)$. 
In Section~\ref{Sint} we establish a simple integral formula for $p_{n,m}$. 
Then, in Section~\ref{Sintasymp} we use a version of the saddle point
method 
to estimate this integral asymptotically. 
Finally, in Section~\ref{Ssum} we prove~\eqref{t1} by summing all $p_{\an,\bm}$ with $n+m=N$.

\subsection{An integral formula for $p_{\an,\bm}$}\label{Sint}

In this section we derive an explicit integral formula for $p_{n,m}$, see~\eqref{pab-uv}. 
We start with a simple conditional version of the classical Otter--Dwass formula (see \eg{} \citet{Dwass}), 
which hinges on the random walk representation of a branching process and a well-known random-walk hitting time result.

\begin{lemma}\label{L1}
For all integers $\an\ge1$ and $\bm,\an_0,\bm_0\ge0$,
\begin{multline}\label{l1}
    \P\bigpar{|\bp\xl|=\an,\,|\bp\xs|=\bm \bigm| Y^0=\an_0,\,Z^0=\bm_0}
=
\frac{\an_0}{\an}
\P\biggpar{
\an_0+\sum_{1\le j\le \an}Y_j =\an,\ 
\bm_0+\sum_{1\le j\le \an}Z_j =\bm
} .   
\end{multline}
\end{lemma}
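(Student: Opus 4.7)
My plan is to extend the classical Otter--Dwass cycle-lemma argument from single- to sesqui-type branching processes, exploiting the fact that type-$S$ particles, having no offspring, contribute only additively to the total count and play no role in the branching structure; thus the $Z_j$'s can be carried along as passengers of the $Y_j$'s. The case $n_0=0$ is trivial (both sides vanish when $n\ge 1$), so I will assume $n_0\ge 1$ and condition on $Y^0=n_0$, $Z^0=m_0$, noting that the $m_0$ initial type-$S$ particles simply add $m_0$ to $|\bp\xs|$ and nothing else.

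Next I fix a canonical (say depth-first, left-to-right) enumeration of the type-$L$ particles, starting with the $n_0$ roots and continuing through their type-$L$ descendants. Writing $(Y_j,Z_j)_{j\ge 1}$ for the offspring counts of the successively visited type-$L$ particles, these pairs are i.i.d.\ copies of $(Y,Z)$. Setting $S_k:=n_0+\sum_{j=1}^k (Y_j-1)$, the event $\{|\bp\xl|=n\}$ coincides with the first-passage event $\{S_k>0 \text{ for } 0\le k<n,\ S_n=0\}$; this forces $\sum_{j=1}^n Y_j=n-n_0$, after which $|\bp\xs|=m_0+\sum_{j=1}^n Z_j$ holds automatically.

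The key step is the cycle (Dvoretzky--Motzkin) lemma: for every fixed $(y_1,\dots,y_n)\in\bbN^n$ with $\sum y_j=n-n_0$, exactly $n_0$ of its $n$ cyclic shifts satisfy the first-passage condition above. Since this condition involves only the $Y$-coordinates of the sequence of pairs, while the product weight $\prod_j\P(Y_j=y_j,Z_j=z_j)$ is invariant under cyclic shifts of the pairs (by independence), summing the joint probability over all pair-sequences with $\sum y_j=n-n_0$ and $\sum z_j=m-m_0$ and grouping by cyclic orbits produces the factor $n_0/n$ and leaves $\P\bigpar{\sum_j Y_j=n-n_0,\ \sum_j Z_j=m-m_0}$, which is exactly the claimed identity~\eqref{l1}. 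No serious obstacle arises: the proof is the univariate cycle-lemma argument with the $Z_j$'s carried along as passengers, and the $Z$-constraint is preserved under cyclic shifts because it is a symmetric function of the pairs.
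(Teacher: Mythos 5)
Your proof is correct and follows essentially the same route as the paper: reduce the conditional probability to a first-passage event for the random walk $S_k = n_0 + \sum_{j\le k}(Y_j-1)$ via depth-first exploration, then apply the cyclic lemma to the pairs $(Y_j-1,Z_j)$, using the facts that the first-passage condition depends only on the $Y$-coordinates and that the i.i.d.\ product weight is invariant under cyclic shifts. The only cosmetic difference is that the paper phrases the final step as taking a uniformly random cyclic shift of the $n$ pairs, whereas you phrase it as grouping deterministic pair-sequences into cyclic orbits; these are two standard ways of saying the same thing.
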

\begin{proof}
Let $(Y_j,Z_j)_{j\ge 1}$ be independent with each pair having the same distribution as $(Y,Z)$.
Since particles of type $S$ do not have any children,
by exploring the branching process~$\bp$ in the usual way (i.e., revealing the offspring of 
the particles of type $L$ one-by-one until none are left to explore), we have
\begin{multline*}
    \P\bigpar{|\bp\xl|=\an,\,|\bp\xs|=\bm \bigm| Y^0=\an_0,\,Z^0=\bm_0}
    \\
 =
 \P\biggpar{
 \an_0+\min_{0\le \an'<\an} \sum_{1\le j\le \an'}(Y_j-1) >0,\ 
 \an_0+\sum_{1\le j\le \an}(Y_j-1) =0,\ 
 \bm_0+\sum_{1\le j\le \an}Z_j =\bm } . 
\end{multline*}
That the \rhs{} of the above expression equals~\eqref{l1} is 
surely folklore (by conditioning on $\sum_{1\le j\le \an}Z_j=\bm-\bm_0$ 
this also follows directly from~\cite[Theorem~7]{Pitman:enum}); 
we include a short argument.
Namely, by a version of the well-known Cyclic Lemma (sometimes also 
called Spitzer's combinatorial lemma), 
see, e.g.,~\cite[Lemma~15.3]{SJ264} or \cite[Lemma~6.1]{Pitman2006}, 
for any sequence $(y_1, \ldots, y_n) $ with $y_i \in \{-1,0,1,2,\ldots\}$ 
and $n_0 + \sum_{1 \le i \le n} y_i = 0$, there are exactly~$n_0$ cyclic 
shifts of $(y_1, \ldots, y_n)$ for which all corresponding partial sums 
$s_i = y_1 + \cdots + y_i$ of length $i \le n-1$ satisfy $n_0 + s_i > 0$. 
Hence, taking a uniformly random cyclic shift of the~$n$ independent 
variables $(Y_j-1,Z_j)$, the formula~\eqref{l1} follows. 
\end{proof}

\begin{remark}
This two-type version of the Otter--Dwass formula is a simple
variation of the usual one-type case;
this is because one type is barren and can essentially be ignored.
For a much more complicated formula in
the general multi-type case, see
\citet{ChaumontLiu}.
\end{remark}

The probability on the \rhs{} of \eqref{l1} can be expressed using
generating functions as
\begin{equation}
  [y^{\an-\an_0}z^{\bm-\bm_0}]\bigpar{g(y,z)^\an}
=  [y^{\an}z^{\bm}]\bigpar{y^{\an_0}z^{\bm_0}g(y,z)^\an}.
\end{equation}
For $\an\ge1$ and $\bm\ge0$, recalling the notation \eqref{pab}
and summing \eqref{l1} over all $\an_0,\bm_0$, we thus obtain
\begin{equation}\label{pab-g}
  \begin{split}
\an p_{\an,\bm} 
&=\sum_{\an_0,\bm_0\ge0} \P(Y^0=\an_0,Z^0=\bm_0)\an_0 
[y^{\an}z^{\bm}]\bigpar{y^{\an_0}z^{\bm_0}g(y,z)^\an}
\\&
=[y^{\an}z^{\bm}]\bigpar{\tg_0(y,z)g(y,z)^\an},
  \end{split}
\end{equation}
where
\begin{equation*}
  \begin{split}
    \tg_0(y,z):=\sum_{\an_0,\bm_0\ge0} \P(Y^0=\an_0,Z^0=\bm_0)\an_0 y^{\an_0}z^{\bm_0}
=y \pddd y g_{Y^0,Z^0}(y,z). 
  \end{split}
\end{equation*}

For later use, we also define
\begin{equation}\label{tf0}
  \tf_0(y,z):=\tg_0(e^y,e^z)=\pddd y f_{Y^0,Z^0}(y,z)
=\E \bigpar{Y^0e^{yY^0+zZ^0}}.
\end{equation}
\begin{remark}
Let $G(y,z):=\E\bigpar{ y^{|\bp\xl|}z^{|\bp\xs|}}$ be the bivariate generating
function for the size of the branching process $\bp$, and let
$G_1(y,z):=\E\bigpar{ y^{|\bpil|}z^{|\bpis|}}$ be the corresponding
generating function when starting with a single particle of type $L$.
Then 
$G(y,z)=g_0(G_1(y,z),z)$ and
$G_1(y,z)=yg(G_1(y,z),z)$, and
the formula~\eqref{pab-g} can alternatively be obtained 
by the Lagrange inversion formula in the B\"urmann form, see \eg{}
\cite[A.(14)]{Flajolet}, regarding the generating functions as (formal) 
power series in $y$ with coefficients that are power series in $z$. 
We omit the details.
\end{remark}

The extraction of coefficients in \eqref{pab-g} can be performed by complex
integration in the usual way 
(e.g., using Cauchy's integral formula to evaluate 
$\frac{\partial^{\an+\bm}}{\partial y^\an\partial z^\bm}\bigl(\tg_0(y,z)g(y,z)^\an\bigr)\big|_{y=z=0}=\an!\,\bm!\, n p_{\an,\bm}$
as in the textbook proof of Cauchy's estimates),
yielding the formula  
\begin{equation*}
  \begin{split}
    \an p_{\an,\bm}=\frac{1}{(2\pi\ii)^2}\oint \oint
    y^{-\an}z^{-\bm}\tg_0(y,z)g(y,z)^\an\frac{\dd y}{y}\frac{\dd z}{z},
  \end{split}
\end{equation*}
where we integrate (for example) over two circles with centre~$0$ and
radii such that $\tg_0(y,z)$ and $g(y,z)$ are defined. In particular, if $(Y,Z)$ and $(Y^0,Z^0)$
are both in $\cK^0$, then for any
$\ga,\gb<\log R$ we can integrate over $|y|=e^{\ga}$ and $|z|=e^\gb$, and
the standard change of variables $y=e^{\ga+\ii u}$, $z=e^{\gb+\ii v}$ then yields
\begin{equation}\label{pab-uv}
    \an p_{\an,\bm} =\frac{1}{4\pi^2}\intpipi \intpipi
e^{-\an(\ga+\ii u)-\bm(\gb +\ii v)}
\tf_0(\ga+\ii u,\gb +\ii v)f(\ga+\ii u,\gb +\ii v)^\an\dd u\dd v.
\end{equation}

\begin{remark}
  Alternatively, \eqref{pab-uv} can be obtained from \eqref{l1} by first
considering suitably tilted versions of the random variables 
(\cf{} \citet{Cramer}), and then passing to \chf{s} and making a Fourier
inversion. 
\end{remark}

\begin{remark}
It is not hard to write an integral formula for the final probability
$p_N=\sum_{m+n=N}p_{n,m}$ that we are aiming to estimate. 
For example, multiplying \eqref{pab-g} by $x^n/n$ and summing
we see that $p_{n,m}=[x^ny^nz^m]H(x,y,z)$, where $H(x,y,z)=-\tg_0(y,z)\log(1-xg(y,z))$.
Thus one can find $p_N$ by extracting the coefficient of $w^0t^N$ in $H(w,t/w,t)$. However, the corresponding integral
does not obviously lend itself to asymptotic evaluation by methods such as those used here.
Still, a direct estimate of $p_N$ may
perhaps be possible by appropriate singularity analysis.
\end{remark}

\subsection{An asymptotic estimate of $p_{n,m}$}\label{Sintasymp}

In this section we estimate the integral~\eqref{pab-uv} asymptotically (see Theorem~\ref{Tpab} below), 
using parameters defined in terms of the moment generating function $f(y,z)=f_{Y,Z}(y,z)=\E\bigpar{e^{yY+zZ}}$. 
Whenever $f$ is defined and non-zero, let
\begin{equation}\label{gf}
  \gf(y,z) = \gf_{Y,Z}(y,z) :=\log f_{Y,Z}(y,z) = \log f(y,z),
\end{equation}
taking the principal value of the logarithm; we shall only consider $\gf$ on domains on which $|f-1|\le 1/2$.
The next lemma simply states that in suitable domains, $f$, $\gf$ and their (partial) derivatives
are all bounded.

\begin{lemma}\label{LB}
There exist constants $0<\ccname\cca \le (\log R)/2$ and $\CCname\CCa\xm$,
$m \in \bbN$,
such that if $(Y,Z)\in\kko$ and $m \in \bbN$, then the following hold.
\begin{romenumerate}
\item \label{LBa}
If $\ga,\gb,u,v\in \bbR$ with $|\ga|,|\gb|\le\cca$, then
$\norm{D^mf(\ga+\ii u,\gb+\ii v)}\le\CCa\xm$.
\item \label{LBc}
If, in addition, $|u|,|v|\le\cca$, then
$\gf(\gabuv)$ is defined, and
$\norm{D^m\gf(\ga+\ii u,\gb+\ii v)}\le\CCa\xm$.
\item \label{LBx}
If $|\ga|,|\gb|\le\cca$, then $\pddd{y}f(\alpha,\beta)\ge \delta/2$.
\end{romenumerate}
\end{lemma}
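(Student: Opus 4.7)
The plan is straightforward: every bound reduces to the single observation that, because $\E R^{Y+Z}\le M$ with $R>1$, every polynomial in $(Y,Z)$ is dominated by an exponential. We will set $c_1:=\min\{(\log R)/2,\,c^*\}$ where $c^*>0$ is small enough to secure (iii) and to ensure that $\phi=\log f$ is defined.

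For (i), write the $m$th derivative in explicit form: for $y=\alpha+\ii u$, $z=\beta+\ii v$ and $x=(a,b)\in\bbC^2$,
\begin{equation*}
D^m f(y,z)[x,\dots,x]=\E\bigl((aY+bZ)^m e^{yY+zZ}\bigr),
\end{equation*}
which is justified by differentiating under the expectation thanks to the uniform domination $|e^{yY+zZ}|\le e^{|\alpha|Y+|\beta|Z}\le R^{(Y+Z)/2}$ valid for $|\alpha|,|\beta|\le(\log R)/2$. Bounding $|aY+bZ|\le(|a|+|b|)(Y+Z)$ and using that for each fixed $m$ there is a constant $K_m=K_m(R)$ with $t^m\le K_m R^{t/2}$ for all $t\ge 0$, we get
\begin{equation*}
\bigl|D^m f(y,z)[x,\dots,x]\bigr|\le (|a|+|b|)^m\,K_m\,\E R^{Y+Z}\le K_m M\,(|a|+|b|)^m.
\end{equation*}
Since for a symmetric $m$-linear form the operator norm is controlled by its diagonal values, this yields $\|D^m f(\alpha+\ii u,\beta+\ii v)\|\le C_1^{(m)}$ for some constant depending only on $R,M,m$.

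For (iii), apply the same derivative-under-the-expectation argument with $m=0$ to $\partial_y f$: $\partial_y f(\alpha,\beta)=\E(Y e^{\alpha Y+\beta Z})$, and $\partial_y f(0,0)=\E Y\ge\delta$ by \eqref{ey0}. Using $|e^w-1|\le|w|e^{|w|}$ and the same exponential-dominates-polynomial estimate,
\begin{equation*}
\bigl|\partial_y f(\alpha,\beta)-\E Y\bigr|\le\E\bigl(Y(|\alpha|Y+|\beta|Z)e^{|\alpha|Y+|\beta|Z}\bigr)\le C'(|\alpha|+|\beta|)M
\end{equation*}
for $|\alpha|,|\beta|\le(\log R)/2$, where $C'=C'(R)$. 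Choosing $c_1\le\delta/(2C'M)$ gives $\partial_y f(\alpha,\beta)\ge\delta/2$.

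For (ii), the bound from (i) applied with $m=1$ gives $\|Df\|\le C_1^{(1)}$ on the polydisc $|\alpha|,|\beta|,|u|,|v|\le c_1$, so $|f(\gabuv)-f(0,0)|\le C_1^{(1)}\cdot 4c_1$ by the mean value inequality; shrinking $c_1$ if necessary we obtain $|f-1|\le 1/2$ throughout this region, so in particular $|f|\ge 1/2$ and $\phi=\log f$ is well defined with the principal branch. Derivatives of $\phi$ are then handled by the Fa\`a di Bruno formula (or a direct induction starting from $D\phi=Df/f$ and $D^{m+1}\phi=D(D^m\phi)$): each $D^m\phi$ is a universal polynomial in the $D^j f$ for $1\le j\le m$ divided by a power $f^k$ with $k\le m$. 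Combining the upper bounds $\|D^j f\|\le C_1^{(j)}$ from (i) with the lower bound $|f|\ge 1/2$ gives $\|D^m\phi(\gabuv)\|\le C_1^{(m)}$, after possibly enlarging the constants. No step is truly delicate; the only point requiring a little care is the Fa\`a di Bruno bookkeeping in (ii), which is a standard induction and presents no real obstacle.
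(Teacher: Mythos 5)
Your proof is correct (up to a trivial constant: in (iii) you need $c_1\le\delta/(4C'M)$, not $\delta/(2C'M)$, to land at $\partial_y f\ge\delta/2$ rather than just $\ge 0$). The structure — first pin down $c_1\le(\log R)/2$ for (i), then shrink it further for (iii) and (ii) — matches the paper, but your technique is genuinely different in two places. For (i), you differentiate under the expectation to get the explicit formula $D^mf(y,z)[x]=\E\bigl((aY+bZ)^m e^{yY+zZ}\bigr)$ and then use the fact that polynomials are dominated by $R^{(Y+Z)/2}$; the paper instead notes that $|g|\le M$ on the polydisc $|y|,|z|\le R$ and invokes Cauchy's estimates on the slightly smaller polydisc, which gets the bound in one line without needing the explicit derivative formula or a polarization argument to pass from diagonal values $D^mf[x]$ to the operator norm $\|D^mf\|$. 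For the derivatives of $\phi=\log f$ in (ii), you use Fa\`a di Bruno combined with the lower bound $|f|\ge\frac12$; the paper again simply applies Cauchy's estimates to $\phi$ after shrinking $c_1$ so that $\phi$ is analytic and bounded on a slightly larger polydisc. Your route is more elementary and makes the $\delta$-dependence explicit; the paper's Cauchy route is shorter and avoids the multilinear bookkeeping. Both yield the same conclusions with the same kind of dependence of the constants on $R,M,\delta$. One minor technical point worth flagging: the justification for differentiating under the expectation should really be phrased via the power series $\sum_{k,l}\pi_{k,l}e^{ky+lz}$ converging absolutely and locally uniformly (or via Cauchy's formula plus Fubini), since the domination you quote is at the point rather than on a neighbourhood; this is standard and does not affect the result.
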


\begin{proof}
  \pfitemref{LBa}
When $|y|, |z|\le R$, then $|g(y,z)|=|\E(y^Yz^Z)|\le \E(|y|^Y|z|^Z) \le \E R^{Y+Z}$,
which is at most $\CM$ by assumption. Thus
$|f(y,z)|\le \CM$ when $\Re(y)\le\log R$ and $\Re(z)\le\log R$.
Recall that $R>1$ by assumption, so $\log R>0$. For any $\cca \le (\log R)/2$, say, 
for suitable $\CCa\xm>0$ statement \ref{LBa} follows by standard Cauchy estimates 

\pfitemref{LBc}
Let $C=\CCa^{(1)}$ denote the constant from the above proof of~\ref{LBa}. Set $\cca:=\min\set{(\log R)/2,1/(8C)}$.
Since $f(0,0)=g(1,1)=1$, it follows from~\ref{LBa} that 
if $|\ga|,|\gb|,|u|,|v|\le \cca$, then
\begin{equation}
  \bigabs{f(\ga+\ii u,\gb+\ii v)-1}
\le (|\ga+\ii u|+|\gb+\ii v|)C\le 4 \cca C \le 1/2 ,
\end{equation}
so $\gf(\gabuv)$ is defined and bounded. 
Furthermore, after decreasing $\cca$ and increasing $\CCa\xm$, if necessary, 
the bounds for the derivatives now again follow by Cauchy's estimates. 

\pfitemref{LBx}
Let $\tf=\pddd{y}f$. By our assumption \eqref{ey0}, $\tf(0,0)=\E Y\ge\gd$.
Furthermore, $D\tf(\ga,\gb)=DD_1f(\ga,\gb)=O(1)$ for $|\ga|,|\gb|\le\cca$ by
part \ref{LBa}. Consequently, after reducing $\cca$ if necessary,
we have 
$\tf(\ga,\gb)\ge\frac12\gd$ for~$|\ga|$,~$|\gb|\le\cca$. 
\end{proof}

The next lemma expresses, in a quantitative form, the unsurprising fact that if we evaluate
the probability generating function $g(y,z)=g_{Y,Z}(y,z)=\E (y^Yz^Z)$ at $y$, $z$ which are not positive real
numbers, then there is significant cancellation, i.e., $|g(y,z)|$ is significantly smaller
than $g(|y|,|z|)$. It will be more convenient to write this in terms of the moment generating function $f=f_{Y,Z}$ rather than $g$.

\begin{lemma}
  \label{LC}
There exists a constant $\ccname\ccd>0$ such that
if $(Y,Z)\in\kk$ 
and $\ga,\gb,u,v \in \bbR$ with 
$|\ga|,|\gb|\le\cca$ and $|u|,|v|\le\pi$, then
\begin{equation}
  \label{lc}
\bigabs{f(\gabuv)} \le f(\ga,\gb)e^{-\ccd(u^2+v^2)}.
\end{equation}
\end{lemma}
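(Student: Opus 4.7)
\emph{Proof plan.} The natural approach is exponential tilting followed by a symmetrization argument. Let $\widetilde{\P}$ be the tilted probability measure on $\bbN^2$ defined by $\widetilde{\P}((Y,Z)=(k,l)) = e^{\alpha k+\beta l}\pi_{k,l}/f(\alpha,\beta)$, where $\pi_{k,l}=\P(Y=k,Z=l)$, and write $\widetilde{\E}$ for the corresponding expectation. Its characteristic function $\phi(u,v) := \widetilde{\E}\bigpar{e^{\ii(uY+vZ)}}$ satisfies
\begin{equation*}
\phi(u,v) \;=\; f(\alpha+\ii u,\beta+\ii v)/f(\alpha,\beta),
\end{equation*}
so the desired inequality \eqref{lc} is exactly $|\phi(u,v)|\le e^{-\ccd(u^2+v^2)}$ for $|u|,|v|\le\pi$. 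Thus the task reduces to producing a uniform, quantitative bound showing that $\phi$ is bounded away from~$1$ on the torus for every $(Y,Z)\in\kk$.

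I would use the standard symmetrization identity: for an independent copy $(Y',Z')$ of $(Y,Z)$ under $\widetilde{\P}$,
\begin{equation*}
1 - |\phi(u,v)|^2 \;=\; \widetilde{\E}\bigsqpar{1 - \cos\bigl(u(Y-Y') + v(Z-Z')\bigr)} \;\ge\; 0.
\end{equation*}
Condition \eqref{a2} supplies three atoms $(k_1,k_2)$, $(k_1+1,k_2)$, $(k_1,k_2+1)$ of $\P$-mass at least~$\gd$. Since $|\alpha|,|\beta|\le\cca\le(\log R)/2$ gives $e^{\alpha k+\beta l}\ge e^{-\cca(k+l)}$ for each of these atoms and $f(\alpha,\beta)\le \E R^{Y+Z}\le \CM$ by \eqref{RYZ}, each of the three atoms retains $\widetilde{\P}$-mass at least some constant $c'>0$ that depends only on $R,\CM,k_1,k_2,\gd$. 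Restricting the expectation to the two product events
\begin{equation*}
\bigset{(Y,Z)=(k_1,k_2),\,(Y',Z')=(k_1+1,k_2)} \quad\text{and}\quad \bigset{(Y,Z)=(k_1,k_2),\,(Y',Z')=(k_1,k_2+1)},
\end{equation*}
on which $u(Y-Y')+v(Z-Z')$ equals $-u$ and $-v$ respectively, yields
\begin{equation*}
1 - |\phi(u,v)|^2 \;\ge\; (c')^2\bigsqpar{(1-\cos u) + (1-\cos v)}.
\end{equation*}

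The elementary inequality $1-\cos t \ge 2t^2/\pi^2$ for $|t|\le\pi$ then gives $1-|\phi|^2\ge c''(u^2+v^2)$. Possibly shrinking $c''$ so that $c''(u^2+v^2)\le 1$ throughout the region (automatic once $c''\le 1/(2\pi^2)$, since $u^2+v^2\le 2\pi^2$), the inequality $1-x\le e^{-x}$ then gives $|\phi|^2\le e^{-c''(u^2+v^2)}$ and hence $|\phi|\le e^{-\ccd(u^2+v^2)}$ with $\ccd:=c''/2$, as desired. This is a quantitative form of the classical fact that a lattice distribution whose support is not contained in any proper coset of a sublattice has characteristic function bounded away from~$1$ off the dual lattice, so I do not anticipate serious difficulties; the only point to monitor is that the constant $c'$ (and hence $\ccd$) depends only on $R,\CM,k_1,k_2,\gd$, which follows routinely from the uniform bounds just noted.
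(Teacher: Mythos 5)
Your proposal is correct and is essentially the same argument as the paper's: the symmetrization identity $1-|\phi(u,v)|^2=\widetilde{\E}[1-\cos(u(Y-Y')+v(Z-Z'))]$ is precisely the paper's expansion of $f(\ga,\gb)^2-|f(\gabuv)|^2$ as a non-negative sum over $(k,l,m,n)$, you pick out the same two atom pairs supplied by \eqref{a2}, and both proofs finish with $1-\cos t=\Omega(t^2)$ on $[-\pi,\pi]$ and $1-x\le e^{-x}$. The only difference is cosmetic: you phrase the calculation via exponential tilting and characteristic functions, while the paper manipulates $f$ directly; the uniformity in $R,\CM,k_1,k_2,\gd$ is tracked correctly in both.
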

\begin{proof}
  Let $\pi_{k,l}:=\P(Y=k,Z=l)$. Then 
  \begin{equation*}
    f(\gabuv)=\sum_{k,l\ge0}\pi_{k,l}e^{k(\ga+\ii u)+l(\gb+\ii v)} ,
  \end{equation*}
and thus $f(\ga,\gb) >0$. 
Then 
\begin{equation*}
f(\ga,\gb)^2-|f(\gabuv)|^2
=\sum_{k,l,m,n}\pi_{k,l}\pi_{m,n}e^{(k+m)\ga+(l+n)\gb}\Bigpar{1-\Re e^{\ii(k-m) u+\ii(l-n)v}}  .
\end{equation*}
Each term on the \rhs{} is non-negative, and considering just the cases
$(k,l,m,n)=(k_1,k_2,k_1+1,k_2)$ and $(k_1,k_2,k_1,k_2+1)$,
recalling \eqref{a2} we obtain 
\begin{equation*}
\begin{split}
f(\ga,\gb)^2-|f(\gabuv)|^2 
& \ge
\delta^2e^{(2k_1+1)\ga+2k_2\gb}(1-\cos u)
+ \delta^2e^{2k_1\ga+(2k_2+1)\gb}(1-\cos v)\\
& = \Omega(u^2+v^2),
\end{split}
\end{equation*}
since $1-\cos x=\Omega(x^2)$ for $|x| \le \pi$.
Moreover, by \refL{LB}\ref{LBa}, $f(\ga,\gb)=O(1)$.
Consequently 
\begin{equation*}
1-  |f(\gabuv)|^2/f(\ga,\gb)^2 \ge 2\ccd(u^2+v^2)
\end{equation*}
for some constant $\ccd>0$, and thus 
\begin{equation*}
  |f(\gabuv)|^2/f(\ga,\gb)^2 \le 1-2\ccd(u^2+v^2) \le e^{-2\ccd(u^2+v^2)},
\end{equation*}
establishing~\eqref{lc} since $f(\ga,\gb) > 0$. 
\end{proof}

We next establish that the symmetric bilinear form $D^2\gf(\ga,\gb)$ is positive-definite; 
a variant of the lower bound~\eqref{ld} could also be proved by first considering $D^2\gf(0,0)$
and then using continuity. For the interpretation of $D^2\gf\xgagb[u,v]$, see~\eqref{Dmf}. 

\begin{lemma}\label{LD}
If $(Y,Z)\in\kk$ and $\ga,\gb\in\bbR$ with 
$|\ga|,|\gb|\le\cca$, then
$D^2\gf(\ga,\gb)\ge \ccd I$, \ie,
\begin{equation}
  \label{ld}
D^2\gf\xgagb[u,v]\ge\ccd (u^2+v^2), \qquad u,v\in\bbR.
\end{equation}
In particular,
$\Det(D^2\gf(\ga,\gb))\ge\ccd^2$.
\end{lemma}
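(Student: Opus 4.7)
The plan is to derive the Hessian bound from the exponential decay of $|f|$ off the real axis in \refL{LC}, via a Taylor expansion of $\gf=\log f$. Since $\gf$ is the principal branch of $\log f$ and $f(\alpha,\beta)>0$ by \refL{LB}\ref{LBx}, we have $\Re\gf(\gabuv)=\log|f(\gabuv)|$ and $\gf(\alpha,\beta)=\log f(\alpha,\beta)$. Thus \refL{LC} becomes
\begin{equation*}
\Re\gf(\gabuv)-\gf(\alpha,\beta) \le -\ccd(u^2+v^2) \quad \text{for } |\alpha|,|\beta|\le\cca \text{ and } |u|,|v|\le\pi.
\end{equation*}

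Next I would Taylor-expand $\gf$ at the real point $(\alpha,\beta)$ along the complex direction $(iu,iv)$. Since $\gf$ is real-analytic at real points, all partial derivatives $D^m\gf(\alpha,\beta)$ are real; hence the order-$1$ and order-$3$ terms of the expansion are purely imaginary, while the order-$2$ term contributes exactly $-\tfrac{1}{2}D^2\gf(\alpha,\beta)[u,v]$, so
\begin{equation*}
\Re\gf(\gabuv) = \gf(\alpha,\beta) - \tfrac{1}{2}D^2\gf(\alpha,\beta)[u,v] + \Re R_4(u,v),
\end{equation*}
with $|R_4(u,v)|\le C(u^2+v^2)^2$ for $|u|,|v|\le\cca$. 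This remainder bound comes from the integral form of Taylor's remainder combined with the uniform estimate $\|D^4\gf\|=O(1)$ on the segment $(\alpha+isu,\beta+isv)$, $s\in[0,1]$, provided by \refL{LB}\ref{LBc}. Combining with the previous display gives
\begin{equation*}
D^2\gf(\alpha,\beta)[u,v] \ge 2\ccd(u^2+v^2) - C'(u^2+v^2)^2 \quad\text{for } |u|,|v|\le\cca.
\end{equation*}

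Finally I extend this to arbitrary $(u,v)\in\bbR^2$ and extract the determinant bound. Since $D^2\gf(\alpha,\beta)[\cdot,\cdot]$ is a homogeneous quadratic form in $(u,v)$, rescaling suffices: for any $(u_0,v_0)\ne 0$ and $t>0$ small enough that $|tu_0|,|tv_0|\le\cca$, substituting $(tu_0,tv_0)$ and dividing by $t^2$ yields $D^2\gf(\alpha,\beta)[u_0,v_0]\ge 2\ccd(u_0^2+v_0^2)-C't^2(u_0^2+v_0^2)^2$, and letting $t\downarrow 0$ establishes \eqref{ld} (in fact with $2\ccd$ in place of $\ccd$). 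The determinant bound then follows immediately: \eqref{ld} says $D^2\gf(\alpha,\beta)\ge\ccd I$ as $2\times 2$ matrices, so both eigenvalues are $\ge\ccd$, giving $\Det(D^2\gf(\alpha,\beta))=\lambda_1\lambda_2\ge\ccd^2$. No serious obstacle arises; the argument runs directly from \refL{LB} and \refL{LC}. The one point needing care is to check that the complex segment used in the Taylor remainder stays within the domain where \refL{LB}\ref{LBc} bounds $\|D^4\gf\|$, which is exactly what forces the regime $|u|,|v|\le\cca$.
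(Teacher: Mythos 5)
Your argument is correct and matches the paper's proof almost exactly: both start from \refL{LC} rewritten as a bound on $\Re\gf(\gabuv)$, Taylor-expand $\gf$ at the real point $(\ga,\gb)$ using that $D^m\gf(\ga,\gb)$ is real (so the odd-order terms are purely imaginary and drop out of the real part), obtain the quadratic lower bound for small $(u,v)$ up to a higher-order remainder, and then rescale $(u,v)\to(tu,tv)$ and let $t\downarrow0$ to pass to all of $\bbR^2$ by homogeneity, finishing with the eigenvalue observation for the determinant. The only cosmetic difference is that you carry the expansion to fourth order to get a $(u^2+v^2)^2$ remainder, whereas the paper stops at a third-order expansion with an $O((|u|+|v|)^3)$ remainder; either is killed by the rescaling, so the two arguments are interchangeable.
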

\begin{proof}
We first consider only $|u|,|v|\le\cca$, so \refL{LB}\ref{LBc} applies.
  Then the estimate \eqref{lc} can be written
  \begin{equation}\label{balle}
\Re\gf(\gabuv)\le \gf(\ga,\gb)-\ccd(u^2+v^2).
  \end{equation}
A Taylor expansion yields
  \begin{equation*}
\gf(\gabuv)= \gf(\ga,\gb)+\ii D\gf\xgagb\xxuv
-\tfrac12D^2\gf\xgagb\xxuv+O\bigpar{(|u|+|v|)^3}.
  \end{equation*}
Since $\gf(\ga,\gb)$ is real for real $\ga$ and $\gb$, all derivatives
$D^m\gf(\ga,\gb)$ are real. Hence, when taking the real part, the linear
term vanishes, and \eqref{balle} implies 
\begin{equation*}
  \tfrac12D^2\gf\xgagb\xxuv\ge \ccd(u^2+v^2)+O\bigpar{(|u|+|v|)^3}.
\end{equation*}
Exploiting bilinearity, by replacing $(u,v)$ with $(tu,tv)$ and  
letting $t\to0$, we now obtain \eqref{ld} for all $u,v\in\bbR$, 
with room to spare. 

Finally, by~\eqref{rM}, note that \eqref{ld} can be written 
$D^2\gf(\ga,\gb)\ge \ccd I$. 
This says that both eigenvalues are~$\ge\ccd$, and
thus the determinant is~$\ge\ccd^2$.
\end{proof}

For $|\ga|,|\gb|\le\cca$, define  
\begin{equation}
  \label{psi}
\psi(\ga,\gb):=\gf(\ga,\gb)-\ga D_1\gf(\ga,\gb)-\gb D_2 \gf(\ga,\gb).
\end{equation}
We are now ready to estimate the integral~\eqref{pab-uv} for $p_{\an,\bm}$ 
using a (two-dimensional) version of the saddle point method (see,
e.g.,~\cite[Chapter~VIII]{Flajolet}).
We defer the problem of finding suitable $(\alpha,\beta)$ 
satisfying equation~\eqref{tpab0} to Section~\ref{Ssum}. 
Recall that $\tf_0(y,z) = \pddd y f_{Y^0,Z^0}(y,z) =\E \bigpar{Y^0e^{yY^0+zZ^0}}$, 
see~\eqref{tf0}. 

\begin{theorem}\label{Tpab}
Suppose that $(Y^0,Z^0)\in\kko$ and $(Y,Z)\in \kk$.
Suppose further that $n\ge1$, $m\ge0$ are integers and that $\ga$, $\gb$ are real numbers
with $|\ga|,|\gb|\le\cca$ such that
\begin{equation}
  \label{tpab0}
D\gf(\ga,\gb)=(1,m/n).
\end{equation}
Then
\begin{equation}\label{tpab}
p_{n,m}=  
n^{-2}e^{n\psi(\ga,\gb)}
\Bigpar{(2\pi)^{-1} \tf_0(\ga,\gb) \Det\bigpar{ D^2\gf(\ga,\gb)}\qqw
+O(n^{-1})} ,
\end{equation}
where the implicit constant depends only on the parameters 
$R,\CM,k_1,k_2,\delta$ of~$\kko$ and~$\kk$. 
\end{theorem}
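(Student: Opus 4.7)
My plan is to apply a two-dimensional saddle point analysis to the integral representation \eqref{pab-uv} for $np_{n,m}$, using the prescribed $(\ga,\gb)$ as the contour. Writing $\gf=\log f$ and Taylor expanding around $(\ga,\gb)$, the key identity for $|u|,|v|\le\cca$ is
\begin{equation*}
-n(\ga+\ii u)-m(\gb+\ii v)+n\gf(\gabuv)=n\psi(\ga,\gb)-\tfrac{n}{2} D^2\gf(\ga,\gb)[u,v]+nE(u,v),
\end{equation*}
where the pure imaginary linear term has cancelled thanks to the saddle point condition $D\gf(\ga,\gb)=(1,m/n)$, and $E(u,v)=O\bigpar{(|u|+|v|)^3}$ is the cubic remainder bounded uniformly via \refL{LB}\ref{LBc}. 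Note that $D^2\gf(\ga,\gb)[u,v]$ is real since $\gf$ restricted to reals is real.

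I will partition $[-\pi,\pi]^2$ into a Gaussian core $B_1=\set{|u|+|v|\le n^{-2/5}}$, an intermediate annulus $B_2=\set{n^{-2/5}<|u|+|v|,\ |u|,|v|\le\cca}$, and a tail $B_3=[-\pi,\pi]^2\setminus[-\cca,\cca]^2$. On $B_2\cup B_3$, the uniform bound $|f(\gabuv)|^n\le f(\ga,\gb)^n e^{-\ccd n(u^2+v^2)}$ from \refL{LC}, combined with $|\tf_0|=O(1)$ (from \refL{LB}\ref{LBa} applied to $(Y^0,Z^0)\in\kko$), gives contributions of order $e^{n\psi(\ga,\gb)-\ccd n^{1/5}/4}$ and $e^{n\psi(\ga,\gb)-\ccd\cca^2 n}$ respectively, both super-polynomially smaller than the target $n\qw e^{n\psi(\ga,\gb)}$.

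On the core $B_1$ we have $n|E(u,v)|=O(n\cdot n^{-6/5})=O(n^{-1/5})=o(1)$, so I can expand $\exp(nE(u,v))=1+nE(u,v)+O\bigpar{n^2(|u|+|v|)^6}$ and $\tf_0(\gabuv)=\tf_0(\ga,\gb)+O(|u|+|v|)$ (the latter via \refL{LB}\ref{LBa}). Extending the Gaussian integration from $B_1$ to $\bbR^2$ introduces only an $O\bigpar{n\qw e^{-cn^{1/5}}}$ error, thanks to the uniform lower bound $D^2\gf(\ga,\gb)\ge\ccd I$ from \refL{LD}. The leading Gaussian integral then evaluates to $2\pi/\bigpar{n\,\Det(D^2\gf(\ga,\gb))\qq}$, while the leading odd-parity corrections---the linear correction from $\tf_0$ and the cubic term $-\tfrac{\ii}{6}D^3\gf(\ga,\gb)[u,v]^3$ from $nE$---vanish against the even Gaussian weight. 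The remaining even residuals, of orders $(|u|+|v|)^2$, $n(|u|+|v|)^4$, and $n^2(|u|+|v|)^6$, each integrate to $O(n^{-2})$ against the Gaussian, contributing an $O(n\qw)$ relative error. Dividing by the factor $n$ from \eqref{pab-uv} yields \eqref{tpab}.

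I expect the principal obstacle to be the parity bookkeeping on the core: achieving $O(n\qw)$ relative error (rather than the weaker $O(n\qqw)$ that a naive triangle-inequality bound on $nE$ would yield) requires verifying explicitly that the leading Taylor corrections from both $\tf_0$ and $nE$ are odd in $(u,v)$ and hence integrate to zero against the even quadratic Gaussian weight. Uniformity of all implicit constants in $R,\CM,k_1,k_2,\delta$ is inherited directly from Lemmas~\ref{LB}--\ref{LD}, whose constants depend only on these parameters.
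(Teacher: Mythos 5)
Your proposal follows essentially the same route as the paper's proof: you use the integral representation~\eqref{pab-uv}, invoke the saddle-point condition~\eqref{tpab0} to obtain the exponential factor $e^{n\psi(\ga,\gb)}$, restrict to a core of scale $n^{-0.4}$ (your $n^{-2/5}$) and discard the rest via Lemma~\ref{LC}, Taylor-expand both $\gf$ and $\tf_0$ on the core, kill the odd cubic and linear corrections by parity, extend the Gaussian to $\bbR^2$ using Lemma~\ref{LD}, and bound the even residuals $O\bigpar{(|u|+|v|)^2}$, $O\bigpar{n(|u|+|v|)^4}$, $O\bigpar{n^2(|u|+|v|)^6}$ by $O(n^{-2})$ each. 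The three-region split $B_1,B_2,B_3$ is a cosmetic refinement of the paper's two-region split; the argument is otherwise the same, and the uniformity claim correctly traces back to Lemmas~\ref{LB}--\ref{LD}.
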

\begin{proof}
  We write \eqref{pab-uv} as
\begin{equation}\label{pnm}
  p_{n,m}=\frac{1}{4\pi^2n}e^{-n\ga-m\gb}f(\ga,\gb)^n I_1,
\end{equation}
where
\begin{equation}\label{imn}
  I_1:=
\intpipi \intpipi
e^{-\ii\an u-\ii\bm v}
\tf_0(\ga+\ii u,\gb +\ii v)\parfrac{f(\ga+\ii u,\gb +\ii v)}{f(\ga,\gb)}^\an
 \dd u\dd v.
\end{equation}
Using assumption~\eqref{tpab0} we have $\psi(\ga,\gb) = \gf(\ga,\gb)-\ga-\gb m/n$, so
\begin{equation}\label{jepp}
e^{-n\ga-m\gb}f(\ga,\gb)^n=
e^{-n\ga-m\gb+n\gf(\ga,\gb)}
=e^{n\psi(\ga,\gb)}.
\end{equation}
We shall estimate~\eqref{imn} using Laplace's method (in two dimensions),
cf.\ e.g.\ \cite[Appendix B.6]{Flajolet}. 
Roughly speaking, the idea is as follows. 
We view the integrand as a product of a term independent of $n$ with a term that is exponential
in~$n$. As we shall see, the condition~\eqref{tpab0} ensures that the
exponent has a stationary point, 
in fact a maximum, at~$u=v=0$. 
It turns out that the main contribution is near to this point, 
and here the exponent may
be approximated by a quadratic, 
leading to a (two-dimensional) Gaussian integral. 

Applying \refL{LB}\ref{LBa} to $(Y^0,Z^0)$ shows that $\tf_0(\ga,\gb)=O(1)$.
Since $\Det\bigpar{ D^2\gf(\ga,\gb)}=\Omega(1)$ by Lemma~\ref{LD}, and 
$\psi(\ga,\gb)=O(1)$ by \eqref{psi} and \refL{LB}\ref{LBc}, 
the conclusion~\eqref{tpab} holds for any fixed~$n$ simply by taking the implicit
constant large enough. Thus we may assume that~$n$ is at least any given constant~$n_0$,
and in particular that $n^{-0.4}\le \cca$.

Applying \refL{LB}\ref{LBa} to $(Y^0,Z^0)$ also shows that
$\tf_0(\gabuv)=O(1)$. 
Hence,
if $|u|\ge \nx$ or $|v|\ge\nx$, then by \refL{LC} the integrand 
in \eqref{imn} is 
$O\bigpar{e^{-\ccd n\cdot n^{-0.8}}}=O\bigpar{e^{-\ccd n^{0.2}}}=O\bigpar{n^{-99}}$.
On the other hand, if $|u|$, $|v|\le\nx$ then, since $\nx\le\cca$,
\refL{LB}\ref{LBc} shows that $\gf(\gabuv)$ is defined and we obtain
\begin{equation}
  \label{i12}
  I_1=O\bigpar{n^{-99}}+I_2 ,
\end{equation}
 with
\begin{equation}\label{qq}
I_2:=
\int_{-\nx}^{\nx}\int_{-\nx}^{\nx}
\tf_0(\ga+\ii u,\gb +\ii v)
e^{\an\xpar{\gf(\ga+\ii u,\gb +\ii v)-\gf(\ga,\gb)}-\ii\an u-\ii\bm v}
 \dd u\dd v.
\end{equation}
Considering a Taylor expansion of $\gf$ around $(\alpha,\beta)$, 
and noting that the linear terms cancel by our assumption~\eqref{tpab0}, we have
\begin{multline}\label{cf}
  \an\xpar{\gf(\ga+\ii u,\gb +\ii v)-\gf(\ga,\gb)}-\ii\an u-\ii\bm v
\\
= -n\tfrac12 D^2\gf\xgagb\xxuv 
-n\tfrac{\ii}6 D^3\gf\xgagb\xxuv +O\bigpar{n(|u|+|v|)^4},
\end{multline}
where we used \refL{LB}\ref{LBc} to bound the error term.
For $|u|$, $|v|\le\nx$, note that \refL{LB}\ref{LBc} implies 
$nD^3\gf(\ga,\gb)\xxuv =O(n(|u|+|v|)^3)=O(n^{-0.2})=O(1)$, 
and $O(n(|u|+|v|)^4)=O(n^{-0.6})=O(1)$.
Hence, writing for brevity 
\begin{equation*}
Q := D^2\gf(\ga,\gb) ,
\end{equation*}
the exponential factor in \eqref{qq} is
{\multlinegap=0pt
\begin{multline}\label{pn}
  e^{-\tfrac12 nQ[u,v]}
\exp\Bigpar{-n\tfrac{\ii}6 D^3\gf\xgagb\xxuv +O\bigpar{n(|u|+|v|)^4}}
\\
=e^{-\tfrac12 nQ[u,v]}\Bigpar{
  1-n\tfrac{\ii}6 D^3\gf\xgagb\xxuv 
+O\bigpar{n(u^4+v^4)}+O\bigpar{n^2(u^6+v^6)}}.
\end{multline}}%
Recalling $\tf_0=\pddd{y}f_{Y^0,Z^0}$, using~\refL{LB}\ref{LBa} 
we also have the Taylor expansion
\begin{equation}\label{qn}
  \tf_0(\gabuv)=
\tf_0(\ga,\gb)+\ii D\tf_0\xgagb\xxuv + O\bigpar{(|u|+|v|)^2}.
\end{equation}
Multiplying together \eqref{pn} and \eqref{qn}, the integrand in
\eqref{qq} is thus 
{\multlinegap=0pt
\begin{multline}\label{ollo}
e^{-\tfrac12 nQ[u,v]}
\Bigl(\tf_0(\ga,\gb)+\ii D\tf_0{}\xgagb\xxuv 
-n\tf_0(\ga,\gb)\tfrac{\ii}6 D^3\gf\xgagb\xxuv 
\\
+ O\bigpar{u^2+v^2} 
+O\bigpar{n(u^4+v^4)}+O\bigpar{n^2(u^6+v^6)}
\Bigr).
\end{multline}}%
When we integrate, the terms with $D\tf_0$ and $D^3\gf$ are odd functions of
$(u,v)$ so their integrals vanish. Hence,
\begin{equation*}
  \begin{split}
I_2
&=\int_{-\nx}^{\nx}\int_{-\nx}^{\nx}
e^{-\tfrac12 nQ[u,v]}
\Bigl(\tf_0(\ga,\gb)
+ O\bigpar{u^2+v^2} 
+O\bigpar{n(u^4+v^4)}+O\bigpar{n^2(u^6+v^6)}
\Bigr)\dd u\dd v .
\end{split}
\end{equation*}
Recalling that $Q=D^2\gf(\ga,\gb)$, by~\refL{LD} we have $Q[u,v]=\Omega(u^2+v^2)$.  
Since for $k \in \{1,2,3\}$ we have 
$\iint_{\bbR^2} e^{-a(u^2+v^2)}(u^2+v^2)^k\dd u\dd v=O(a^{-(k+1)})$, 
it follows that 
\begin{equation*}
I_2 =\tf_0(\ga,\gb)\int_{-\nx}^{\nx}\int_{-\nx}^{\nx}
e^{-\tfrac12 nQ[u,v]} \dd u\dd v
+O(n^{-2}).    
\end{equation*}
Since $Q=D^2\gf(\ga,\gb)$ is symmetric and positive-definite by Lemma~\ref{LD}, 
we have the following standard Gaussian integral over $\bbR^2$:
\begin{equation}\label{eq:GI}
\iint_{\bbR^2} e^{-nQ[u,v]/2} \dd u\dd v
=n^{-1} \cdot 2\pi (\Det(Q))\qqw .
\end{equation}
Since $Q[u,v]=\Omega(u^2+v^2)$, the contribution 
of the range $\max\{|u|,|v|\}\ge n^{-0.4}$ to the 
above integral~\eqref{eq:GI} is again exponentially small. 
Hence 
\begin{equation}\label{kn}
I_2
=\tf_0(\ga,\gb) \cdot n\qw 2\pi (\Det(Q))\qqw
+O(n^{-2}).    
\end{equation}
The result follows by combining \eqref{pnm}, \eqref{jepp},
\eqref{i12} and~\eqref{kn}.
\end{proof}

We next estimate the exponent in~\eqref{tpab}, without assuming that equation~\eqref{tpab0} holds.

\begin{lemma}\label{Lpsi}
There exists a constant $0<\ccname\cckk\le\cca$
such that
if $(Y,Z)\in\kk$ and $\ga,\gb \in \bbR$ with 
$|\ga|,|\gb|\le\cckk$, then
\begin{equation}
  \label{lpsi}
  \psi(\ga,\gb)\le -\tfrac{1}{4}\ccd (\ga^2+\gb^2).
\end{equation}
Moreover, $\psi(0,0)=0$, $D\psi(0,0)=0$ and $D^2\psi(0,0)\le -\ccd I$. 
\end{lemma}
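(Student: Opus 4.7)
The plan is to treat \eqref{lpsi} as a quantitative second-order Taylor expansion of $\psi$ at $(0,0)$, using that the constant and linear terms vanish, that $D^2\psi(0,0)=-D^2\gf(0,0)$ is bounded above by $-\ccd I$ via \refL{LD}, and that $D^2\psi$ is uniformly continuous in $(\ga,\gb)$ thanks to the derivative bounds in \refL{LB}\ref{LBc}. The value $\cckk$ is then chosen small enough that the negative-definiteness of $D^2\psi(0,0)$ propagates (with a factor of $1/2$ loss) to the whole square $|\ga|,|\gb|\le\cckk$.

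First I would verify the three identities at $(0,0)$. Since $f(0,0)=g(1,1)=1$ we have $\gf(0,0)=0$, so $\psi(0,0)=0$ directly from \eqref{psi}. Writing $x=(\ga,\gb)$, the definition \eqref{psi} gives $\psi(x)=\gf(x)-x\cdot D\gf(x)$, hence differentiating,
\begin{equation*}
D\psi(x)[h]=D\gf(x)[h]-D\gf(x)[h]-D^2\gf(x)[x,h]=-D^2\gf(x)[x,h],
\end{equation*}
so $D\psi(0,0)=0$. Differentiating once more,
\begin{equation*}
D^2\psi(x)[h,k]=-D^2\gf(x)[h,k]-D^3\gf(x)[x,h,k],
\end{equation*}
so $D^2\psi(0,0)=-D^2\gf(0,0)\le -\ccd I$ by \refL{LD} applied at $(\ga,\gb)=(0,0)$.

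Next I would transfer the negative definiteness to a neighbourhood. By \refL{LB}\ref{LBc} with $m=3$, we have $\norm{D^3\gf(\ga,\gb)}\le \CCa^{(3)}$ throughout $|\ga|,|\gb|\le\cca$. Since the $3$-linear form bound yields $|D^3\gf(x)[x,h,k]|\le \CCa^{(3)}|x||h||k|$, combining with \refL{LD} gives, as symmetric matrices,
\begin{equation*}
D^2\psi(x)\le -\ccd I+\CCa^{(3)}|x|\,I.
\end{equation*}
Choosing $\cckk\le\cca$ small enough that $\CCa^{(3)}\sqrt2\,\cckk\le \ccd/2$ ensures $D^2\psi(\ga,\gb)\le -\tfrac12\ccd I$ for all $|\ga|,|\gb|\le\cckk$.

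Finally I would apply Taylor's formula with integral remainder along the segment from $0$ to $(\ga,\gb)$: since $\psi(0,0)=0$ and $D\psi(0,0)=0$,
\begin{equation*}
\psi(\ga,\gb)=\int_0^1(1-t)\,D^2\psi(t\ga,t\gb)[\ga,\gb]\,\dd t\le \int_0^1(1-t)\bigpar{-\tfrac12\ccd(\ga^2+\gb^2)}\,\dd t=-\tfrac14\ccd(\ga^2+\gb^2),
\end{equation*}
which is \eqref{lpsi}. No step is really an obstacle; the only point requiring care is the bookkeeping when differentiating $\psi$ through the bilinear term $x\cdot D\gf(x)$ so that the cancellation making $D\psi(0,0)=0$ is transparent.
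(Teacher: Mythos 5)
Your proof is correct and follows essentially the same route as the paper: verify $\psi(0,0)=0$, $D\psi(0,0)=0$, and $D^2\psi(0,0)=-D^2\gf(0,0)\le-\ccd I$, then propagate the negative-definiteness to a small neighbourhood using the boundedness of higher derivatives of $\gf$ from \refL{LB}\ref{LBc} and conclude with a Taylor expansion. The only cosmetic difference is that you bound $D^2\psi$ uniformly and use the integral form of the remainder, whereas the paper bounds $D^3\psi$ and uses a third-order Taylor expansion about the origin; both are the same idea.
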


\begin{proof}
We have $\psi(0,0)=\gf(0,0)=0$.
Furthermore,
differentiating \eqref{psi} yields
\begin{align*}
D_1\psi(\ga,\gb)& = -\ga D_{11}\gf(\ga,\gb) -\gb D_{12}\gf(\ga,\gb),
\\
D_2\psi(\ga,\gb)& = -\ga D_{12}\gf(\ga,\gb) -\gb D_{22}\gf(\ga,\gb),
\end{align*}
and thus $D\psi(0,0)=(0,0)$.
Differentiating again shows that
$D_{ij}\psi(0,0)=-D_{ij}\gf(0,0)$ for  
all $i,j\in\set{1,2}$. Hence, using \refL{LD},
\begin{equation*}
  D^2\psi(0,0)=-D^2\gf(0,0)\le -\ccd I.
\end{equation*}
Moreover, it follows from \refL{LB}\ref{LBc} that $\norm{D^3\psi(\ga,\gb)}=O(1)$
for $|\ga|,|\gb|\le\cca$.
Consequently, a Taylor expansion yields \eqref{lpsi} 
for $\cckk$ sufficiently small.
\end{proof}

\subsection{Summing $p_{n,m}$: proof of \refT{T1}}\label{Ssum}

In this section we prove \refT{T1} by summing several different estimates of the point probabilities in 
\begin{equation}\label{qN}
 \qN=\sum_{n=0}^N p_{n,N-n} .
\end{equation}
Throughout we consider, as in \eqref{tpab}, only real inputs $\alpha$, $\beta$ for the 
various functions $f$, $\gf$ etc. Thus, all relevant functions are treated as mapping from (subdomains in)
$\bbR^n$ to $\bbR^m$ for suitable $n$, $m$.

An individual of type $L$ has on average $\E Y$ children of type $L$ and $\E Z$ children of type $S$. 
So, in the near-critical case $\E Y \approx 1$, we expect that the overall fraction 
of type $L$ individuals in~$\bp$ should be close to 
\begin{equation*}
 x_0:=1/(1+\E Z).
\end{equation*}
This suggests that the contribution from terms in \eqref{qN} with~$n/N$ 
far from~$x_0$ will be negligible, and we shall later confirm this by standard Chernoff-like estimates. 
Below our main focus is thus on the terms where~$n/N$ is close to~$x_0$. 
Here the plan is to rewrite the asymptotic estimate~\eqref{tpab} 
for $p_{n,N-n}$ using the following version of the inverse function theorem, 
where we explicitly state uniformity for a set of functions. We define
\[
 \Bd_r:=\set{x\in\bbR^\Nd:|x|<r} \qquad \text{ and } \qquad B_r := \Bt_r = \set{x\in\bbR^2:|x|<r}.
\]

\begin{lemma}[Inverse function theorem]\label{LI}
Let $\Nd\ge1$ be an integer and $r>0$ a real number. 
For every~$0 < A<\infty$, there exist $\rhox > 0$ and $0 < r_1< r$, both depending only on $A,r$,
such that if 
$F:\Bd_r\to\bbR^\Nd$ is twice continuously
differentiable and satisfies
\begin{romenumerate}
\item $F(0)=0$, 
\item 
$DF(0)$ is invertible and $\norm{DF(0)\qw}\le A$, and 
\item 
$\norm{D^2F(x)}\le A$ for all $x\in \Bd_r$,
\end{romenumerate}
then there exists a twice continuously
differentiable function $G:\Bd_\rhox\to \Bd_{r}$ with 
$G(0)=0$ and $F(G(y))=y$ for $y\in \Bd_\rhox$. 
Furthermore, for each $y\in \Bd_\rhox$, $x=G(y)$ is the unique $x\in \bbR^d$ with $|x| \le r_1$ such that $F(x)=y$.
Moreover, $\norm{DG(y)}=O(1)$ and $\norm{D^2G(y)}=O(1)$, uniformly for $y\in \Bd_\rhox$ and all such $F$, 
and if $F$ is infinitely differentiable or (real) analytic, then so is $G$. 
\end{lemma}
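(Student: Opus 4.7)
The plan is the classical contraction-mapping proof of the inverse function theorem, carrying the constants carefully so that they depend only on $A$ and $r$.

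First I would write $L := DF(0)$ and decompose $F(x) = Lx + R(x)$, where $R(0) = 0$ and $DR(0) = 0$. The hypothesis $\norm{D^2 F} \le A$ on $\Bd_r$ gives $\norm{DR(x)} \le A|x|$ by integrating along the segment from $0$ to $x$. Solving $F(x) = y$ is equivalent to the fixed-point equation $x = T_y(x) := L^{-1}y - L^{-1}R(x)$. Since $\norm{L^{-1}} \le A$, we have $\norm{DT_y(x)} \le A^2|x|$, so after setting $r_1 := \min(r/2,\, 1/(2A^2))$, $T_y$ is a $\tfrac{1}{2}$-contraction on the closed ball $\overline{\Bd_{r_1}}$. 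Choosing $\rhox := r_1/(2A)$, one checks that $|T_y(0)| \le A|y| \le r_1/2$, so for $|y| < \rhox$ the map $T_y$ sends $\overline{\Bd_{r_1}}$ into itself; Banach's fixed-point theorem then produces a unique $x = G(y) \in \overline{\Bd_{r_1}} \subset \Bd_r$ with $F(x) = y$, and the uniqueness clause follows because any two solutions with $|x| \le r_1$ would both be fixed points of the contraction $T_y$.

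For the derivative bounds I would use the Neumann series: since $\norm{L^{-1}DR(x)} \le A^2 r_1 \le 1/2$, the map $DF(x) = L(I + L^{-1}DR(x))$ is invertible on $\Bd_{r_1}$ with $\norm{DF(x)^{-1}} \le 2A$. Classical implicit differentiation of $F(G(y)) = y$ yields $DG(y) = DF(G(y))^{-1}$, whence $\norm{DG(y)} \le 2A$. Differentiating once more via the chain rule expresses $D^2G(y)$ as a composition of $DF(G(y))^{-1}$, $D^2F(G(y))$, and $DG(y)$; the uniform bounds on these three factors deliver $\norm{D^2G(y)} = O(1)$, with constant depending only on $A$.

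For smoothness and analyticity propagation I would invoke the implicit function theorem in the appropriate category: if $F$ is $C^\infty$, bootstrapping the identity $DG(y) = DF(G(y))^{-1}$ shows $G$ is $C^\infty$. For real-analytic $F$ I would either cite the analytic implicit function theorem, or complexify: $F$ extends holomorphically to a complex neighbourhood of $0 \in \bbC^d$, Cauchy estimates give analogous bounds for $DF(0)^{-1}$ and $\norm{D^2 F}$ on a slightly smaller complex domain, and rerunning the same contraction argument there produces a holomorphic local inverse whose restriction to $\bbR^d$ coincides with $G$ by uniqueness. The main obstacle here is purely bookkeeping: verifying that the explicit choices $r_1 = \min(r/2,\, 1/(2A^2))$ and $\rhox = r_1/(2A)$ simultaneously make $T_y$ a self-map of $\overline{\Bd_{r_1}}$ with contraction ratio $1/2$, and that all derivative bounds depend only on $A$. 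Neither step is delicate, but the uniformity over the whole class of admissible $F$ — not just a single $F$ — is the whole point of the lemma, and is secured by the explicit constants above.
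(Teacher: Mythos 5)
Your proposal is correct and follows essentially the same route as the paper: your fixed-point map $T_y(x)=L^{-1}y-L^{-1}R(x)$ is algebraically identical to the paper's $\Gamma(x)=x+DF(0)^{-1}(y-F(x))$, your choices of $r_1$ and $\rhox$ coincide with the paper's, and the derivative bounds are obtained in the same way via the Neumann series and implicit differentiation. The only cosmetic difference is that you invoke Banach's fixed-point theorem abstractly where the paper spells out the iteration $x_{n+1}=\Gamma(x_n)$ explicitly; both are equivalent and the smoothness/analyticity transfer is handled in the same standard way.
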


\begin{proof}
This follows by a standard proof of the inverse function theorem; we
give some details for completeness. 

First, let $r_1:=\frac12\min\xcpar{r,A\qww}$. If $|x|\le r_1$, then by the
mean-value theorem
$\norm{DF(x)-DF(0)}\le A|x|\le Ar_1$. 
Hence,
$\norm{DF(0)\qw DF(x)-I}\le  A^2r_1\le\frac12$, and thus
$DF(0)\qw DF(x)$ is invertible and its inverse
has norm at most~$2$ 
(e.g., by the von Neumann series representation of the inverse).
Consequently,
$DF(x)$ is invertible and
\begin{equation}\label{DF}
 \norm{DF(x)\qw}\le \norm{DF(0)\qw} \cdot  
\norm{(DF(0)\qw DF(x))\qw}\le 2A
\hbox{\qquad for }|x|\le r_1.
\end{equation}
Next, let $\rhox:=r_1/(2A)$.
If $|y|<\rhox$, define inductively $x_0:=0$ and
$x_{n+1}:=\Gamma(x_n)$, where
\[
  \Gamma(x):=x+DF(0)\qw(y-F(x)).
\]
Using $\norm{DF(0)\qw}\le A$ and that
$\norm{D \Gamma(x)} = \norm{I-DF(0)\qw DF(x)}\le \frac12$ if $|x| \le r_1$,
it is easy to show by induction that
$|x_n|\le(1-2^{-n})r_1$
and
$|x_{n+1}-x_n| \le 2^{-n}A \rhox\le 2^{-n-1}r_1$.
Hence $x_n$ is defined for all $n\ge0$, and converges to some
$x$ with $|x|\le r_1<r$.
Furthermore, $y-F(x_n)=DF(0)(x_{n+1}-x_n)\to0$ as \ntoo,
and thus by continuity $F(x)=y$. Define $G(y):=x$.

This shows that the inverse function $G$ exists in~$\Bd_\rhox$. 
The uniqueness statement is immediate, since any~$x \in \bbR^d$ satisfying
$F(x)=y$ is a fixed point of $\Gamma(x)$, which is a contraction for $|x| \le r_1$. 
Differentiability (and analyticity when $F$ is analytic) 
follows in the usual way (or by appealing to a
standard version of the inverse function theorem, locally at~$G(y)$).
Finally, $DG(y)=DF(x)\qw$, and thus $\norm{DG(y)}\le 2A$ by \eqref{DF}.
Another differentiation (using the chain rule) then yields $\norm{D^2G(y)}=O(1)$.
\end{proof}

Our next aim is to construct an (implicit) solution 
$(\alpha,\beta)=h(n/N)$ to equation~\eqref{tpab0} 
when $N=n+m$ and~$n/N$ is close to~$x_0=1/(1+\E Z)$. 
We start by applying \refL{LI} to the function $F:B_{\cckk}\to\bbR^2$ 
defined~by
\begin{equation}\label{F}
  F(\ga,\gb):=\Bigpar{\E Y - D_1\gf(\ga,\gb),\;
    \frac{1}{1+D_2\gf(\ga,\gb)}-x_0}.
\end{equation}
Note that $D_2\gf(\ga,\gb)=D_2f(\ga,\gb)/f(\ga,\gb)\ge0$, and thus
$F(\ga,\gb)$ is well-defined.
Furthermore, 
$D\gf(0,0)=(\E Y,\E Z)$, and thus $F(0,0)=(0,0)$.
Moreover, using matrix form (where the first column 
is $\pddd{\ga}$ of the vector
valued function $F$ and the second is $\pddd{\gb}$), we have 
\begin{equation}\label{br}
  D F=
  \begin{pmatrix}
    -1&0\\
0& -(1+D_2\gf)\qww
  \end{pmatrix}
D^2\gf.
\end{equation}
It follows from \refL{LD} that $\bignorm{(D^2\gf(\ga,\gb))\qw} =O(1)$,
and then \eqref{br} together with \refL{LB} yields 
\begin{equation}\label{DFi}
 \norm{DF(\ga,\gb)\qw} =O(1).
\end{equation}
\refL{LB} also implies 
$\norm{D^2F(\ga,\gb)} =O(1)$. 
Consequently, \refL{LI} applies (with $d=2$) and yields a constant 
$\rhox=\ccname\ccq>0$
and a function $G:B_{\ccq}\to B_{\cckk}$ such that 
\begin{equation}
  \label{FG}
F(G(y))=y
\hbox{\qquad for }y\in B_{\ccq}.
\end{equation}
Recall that $x_0=1/(1+\E Z)$. 
Since $\E Z=D_2 f(0,0)=O(1)$ by \refL{LB}
and $\E Z\ge \gd>0$ by \eqref{ez0},
there exists a constant $c>0$ such that $c\le x_0\le1-c$.
Let 
\[
\ccname\ccy:=\tfrac{1}{2}\min\{\ccq,\:c\}.
\] 
Suppose that $|\E Y-1|<\ccy$.
If also $\abs{x-x_0}<\ccy$, 
then $\bigpar{\E Y-1,x-x_0}\in B_{\ccq}$; we then
define
\begin{equation}\label{h}
  h(x):=G\xpar{\E Y-1,x-x_0}\in B_{\cckk}.
\end{equation}
Furthermore, $\abs{x-x_0}<\ccy\le \frac12c\le \frac12x_0$ implies 
$x\ge\frac12x_0\ge\ccy$
and $1-x\ge\ccy$.
Now suppose that $0< n\le N$ and that
 $\abs{n/N-x_0}<\ccy$,
and let 
$m:=N-n$ and 
$(\ga,\gb):=h\bigpar{n/N}$. 
Then, by \eqref{h} and \eqref{FG}, 
\begin{equation}
F(\ga,\gb)=
\Bigpar{\E Y-1,\;\frac{n}N-x_0}  
=\Bigpar{\E Y-1,\;\frac{1}{1+m/n}-x_0} .
\end{equation}
Definition \eqref{F} shows that \eqref{tpab0} holds. 
Hence, by \refT{Tpab}, \eqref{tpab} holds.
For $\abs{x-x_0}<\ccy$ define
\begin{align}
  \Psi(x)&:= x\psi(h(x)),\label{Psi}
\\
  \Phi(x)&:= 
  (2\pi)^{-1}x^{-2} \tf_0(h(x)) \Det\bigpar{ D^2\gf(h(x))}\qqw.\label{Phi}
\end{align}
Recall that $h(x)\in B_{\cckk}\subseteq B_{\cca}$, and note that
\refL{LD} implies $\Det\bigpar{ D^2\gf(h(x))}\ge\ccd^2$; 
thus~$\Psi(x)$ and~$\Phi(x)$ are well-defined. 
Then,  
still assuming
$|\E Y-1|<\ccy$,
 $\abs{n/N-x_0}<\ccy$
and $(\ga,\gb):=h\bigpar{n/N}$, 
we see that~\eqref{tpab} can be written
\begin{equation}\label{tpnN}
p_{n,N-n}=  
N^{-2}e^{N\Psi(n/N)}
\bigpar{\Phi(n/N)+O(N^{-1})}.
\end{equation}
(Here, we use $\abs{n/N-x_0}<\ccy \le x_0/2$ to bound $n\ge \ccy N$, so an $O(n^{-1})$ error term is $O(N^{-1})$.)

We next show that, in the relevant 
domains, the functions $\Phi$, $\Psi$ and their (partial) 
derivatives are all bounded. 

\begin{lemma}\label{LE}
For each $m\ge0$,
there exists a constant $\CCname\CCf\xm$ such that   
  if $|\E Y-1|<\ccy$ and  $\abs{x-x_0}<\ccy$, then
$\norm{D^m\Phi(x)}\le\CCf\xm$ and
$\norm{D^m\Psi(x)}\le\CCf\xm$.
\end{lemma}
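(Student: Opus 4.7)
The strategy is to first establish uniform bounds on all derivatives of the implicit function $h$, and then deduce the bounds on $\Phi$ and $\Psi$ via the chain and Leibniz rules. The main inputs are that $\gf$ and $\tf_0$ have uniformly bounded derivatives of all orders on the relevant domains (\refL{LB}), that $\|DF(\ga,\gb)^{-1}\|=O(1)$ (equation~\eqref{DFi}), and that $\Det(D^2\gf)\ge\ccd^2$ (\refL{LD}).

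First I would show $\|D^m h(x)\|=O(1)$ for each $m\ge 1$. Differentiating the defining identity $F(h(x))=(\E Y-1,\,x-x_0)$ with respect to $x$ yields $DF(h(x))\,h'(x)=(0,1)^\top$, so $|h'(x)|=O(1)$ by~\eqref{DFi}. Further differentiation, together with the standard formula $(A^{-1})'=-A^{-1}A'A^{-1}$ for the derivative of a matrix inverse, gives by induction on $m$ an expression for $h^{(m)}(x)$ as a polynomial in the entries of $DF(h(x))^{-1}$, in $D^jF(h(x))$ for $j\le m$, and in $h^{(k)}(x)$ for $k<m$. Since $F$ is defined via~\eqref{F} as a smooth function of $\gf$ and $D_2\gf$ (with denominator $1+D_2\gf\ge 1$, because $D_2\gf=D_2 f/f\ge 0$), \refL{LB}\ref{LBc} supplies a uniform bound on each $D^jF$; combined with~\eqref{DFi}, the induction then delivers $|h^{(m)}(x)|=O(1)$ uniformly on $|x-x_0|<\ccy$. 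Existence of $h^{(m)}$ of all orders is guaranteed by the analytic clause of \refL{LI}, since $\gf$, and thus $F$, is real-analytic on its domain.

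Next I would bound $D^m\Psi$ and $D^m\Phi$ by the multivariable chain rule. For $\Psi(x)=x\,\psi(h(x))$, definition~\eqref{psi} exhibits $\psi$ as a polynomial in $\ga,\gb$ and the first-order derivatives of $\gf$, so all derivatives of $\psi$ are uniformly bounded on $B_{\cca}$ by \refL{LB}\ref{LBc}; combined with Step~1 this yields the claimed bound on $D^m\Psi$. For $\Phi(x)=(2\pi)^{-1}x^{-2}\tf_0(h(x))\Det(D^2\gf(h(x)))^{-1/2}$, note that $|x-x_0|<\ccy\le x_0/2$ forces $x\ge\ccy>0$, so $x^{-2}$ has bounded derivatives of all orders; $\tf_0$ has uniformly bounded derivatives on $B_{\cca}$ by \refL{LB}\ref{LBa} applied to $(Y^0,Z^0)\in\kko$; and the map $M\mapsto\Det(M)^{-1/2}$ is real-analytic on the open set of $2\times 2$ symmetric matrices with $\Det(M)\ge\ccd^2/2$, with uniformly bounded derivatives once the entries of $M$ are bounded, which holds along $M=D^2\gf(h(x))$ by \refL{LB}\ref{LBc}. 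A final application of the Leibniz rule to the product of the three factors completes the argument.

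The main obstacle is Step~1: \refL{LI} only supplies bounds on $DG$ and $D^2G$, so bounds on higher derivatives of $h$ must be extracted by hand from the implicit equation. The induction itself is routine, but the uniformity of the constants in the parameters of~$\kk$ must be verified at each step.
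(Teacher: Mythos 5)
Your proof is correct and takes essentially the same approach as the paper: bound $|D^m h(x)|=O(1)$ for all $m$ by implicit differentiation of $F\circ h = (\E Y-1,\,x-x_0)$ using the bounds \eqref{DFi} on $DF^{-1}$ and \refL{LB} on $D^jF$, then propagate to $\Psi$ and $\Phi$ via the chain and Leibniz rules together with \refL{LB}, \refL{LD}, and $x\ge\ccy$. The only cosmetic difference is that the paper phrases the first step as bounding $D^m G$ and then pulls back to $h$ via \eqref{h}, whereas you differentiate the equation satisfied by $h$ directly; these are equivalent.
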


\begin{proof}
We saw in the proof of \refL{LI} that $DG(y)=\bigpar{DF(G(y)}\qw$, which is
bounded for $y\in B_{\ccq}$ by~\eqref{DF}. By further differentiations, using
the chain rule, \refL{LB}\ref{LBc} and induction, it follows that
for each~$m\ge0$, 
\begin{equation}\label{DmG}
\norm{D^m  G\xpar{\E Y-1,x-x_0}}=O(1)
\end{equation}
when $|\E Y-1|<\ccy$ and  $\abs{x-x_0}<\ccy$. Hence the definition \eqref{h}
yields $|D^m h(x)|=O(1)$, and the result follows by \eqref{Psi}--\eqref{Phi}
together with the chain rule and Lemmas \ref{LB} and \ref{LD}.
\end{proof}

Note for later than since $G(0)=0$ and $\norm{D G(y)} =O(1)$ in $B_{\ccy}$,
we have
\begin{equation}\label{GB1}
 |G(w,x-x_0)| = O(|(w,x-x_0)|)
\end{equation}
if $(w,x-x_0)\in B_{\ccy}$.

We now analyze the exponential term $e^{N \Psi(n/N)}$ of the 
formula~\eqref{tpnN} for $p_{n,N-n}$, 
which is valid for $\abs{n/N-x_0}<\ccy$.
The next result in particular implies that $\Psi(x) \le 0$ is a concave 
function with a unique maximizer~$\xx$ close to~$x_0$. 
As we shall see, this essentially means that the dominant contribution 
to the sum of the $p_{n,N-n}$ comes from the terms 
with~$n/N$ close to~$\xx$, which is in turn close to~$x_0$.

\begin{lemma}
  \label{LF}
There exist constants
$\ccname\cclfx, \ccname\cclf >0$ with $\cclf\le\cclfx<\frac13\ccy$
such that if\/ $|\E Y-1|\le\cclf$, then the following hold.
\begin{romenumerate}
\item \label{LF0}
If $x\in\bbR$ with $|x-x_0|\le3\cclfx$, then 
\begin{equation}\label{lf0}
  \Psi(x)= -\Omega\bigpar{|\E Y-1|^2+|x-x_0|^2}.
\end{equation}
\item \label{LF'}
There exists $\xx\in\bbR$ with $|\xx-x_0| = O(|\E Y-1|)$ and $|\xx-x_0| < \cclfx$ such that 
$\Psi'(\xx)=0$.
\item   \label{LF''}
$\Psi''(x) = -\Omega(1)$
for every $x$ with $|x-x_0|\le3\cclfx$.
\item \label{LFPhi}
$\Phi(\xx)=\Omega(1)$.
\end{romenumerate}%
As a consequence, $\xx$ is the unique maximum point 
of  $x\mapsto\Psi(x)$ in $[x_0-3\cclfx,x_0+3\cclfx]$.
\end{lemma}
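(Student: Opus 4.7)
My plan is to treat $\Psi$ as a smooth function of \emph{both} $x$ and the parameter $\eta:=\E Y-1$: define $\tilde h(x,\eta'):=G(\eta',x-x_0)$ and $\tilde\Psi(x,\eta'):=x\,\psi(\tilde h(x,\eta'))$ on the domain $|x-x_0|,|\eta'|<\ccy$, so that $h(x)=\tilde h(x,\eta)$ and $\Psi(x)=\tilde\Psi(x,\eta)$. Although everything is built from one particular distribution $(Y,Z)$, the bounds in Lemmas~\ref{LB}, \ref{LD} and~\ref{LE} are uniform over $\kk$, so estimates derived at the \emph{base point} $(x,\eta')=(x_0,0)$ will transfer to a common neighbourhood via Taylor expansion.

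At the base point we have $\tilde h(x_0,0)=G(0,0)=0$, so $\psi(\tilde h)=0$ and $D\psi(\tilde h)=0$ by~\refL{Lpsi}. Differentiating $\tilde\Psi$ and using $\psi(0)=0$, $D\psi(0)=0$ and $D^2\psi(0)\le -\ccd I$ gives
\[
 \tilde\Psi(x_0,0)=\partial_x\tilde\Psi(x_0,0)=0, \qquad \partial_x^2\tilde\Psi(x_0,0)=x_0\,D^2\psi(0)[h_*,h_*],
\]
where $h_*:=\partial_x\tilde h(x_0,0)$ is the second column of $DG(0,0)=DF(0,0)^{-1}$. From~\refL{LB} and~\eqref{br} we have $\|DF(0,0)\|=O(1)$ uniformly over $\kk$, so $|h_*|\ge 1/\|DF(0,0)\|=\Omega(1)$, and combined with $x_0\ge\ccy$ this yields $\partial_x^2\tilde\Psi(x_0,0)\le -c_0$ for a constant $c_0>0$ depending only on the class parameters. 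By~\refL{LE} all third derivatives of $\tilde\Psi$ are bounded uniformly, so a Taylor expansion extends the bound: choosing $\cclf$ and $\cclfx$ small enough gives $\partial_x^2\tilde\Psi(x,\eta')\le -c_0/2$ throughout $\{|x-x_0|\le 3\cclfx,\ |\eta'|\le\cclf\}$. Specialising $\eta'=\eta$ proves~\ref{LF''}.

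For~\ref{LF'} I apply the implicit function theorem to $\partial_x\tilde\Psi$ at $(x_0,0)$: the non-vanishing of $\partial_x^2\tilde\Psi$ produces a smooth function $\eta'\mapsto\xx(\eta')$ with $\xx(0)=x_0$, $\partial_x\tilde\Psi(\xx(\eta'),\eta')=0$ and $|\xx(\eta')-x_0|=O(|\eta'|)$; after shrinking $\cclf$ if needed, $|\xx-x_0|<\cclfx$. For~\ref{LF0} I combine~\refL{Lpsi} with the local diffeomorphism property of $G$: since $G(0,0)=0$ and $\|DG(0,0)^{-1}\|=\|DF(0,0)\|=O(1)$, a short Taylor expansion gives $|G(y)|\ge c_1|y|$ for $|y|$ smaller than some uniform constant, and hence
\[
 \Psi(x)=x\,\psi(h(x))\le -\tfrac{1}{4}\ccd\,x\,|h(x)|^2 \le -\Omega\bigl(|\E Y-1|^2+(x-x_0)^2\bigr),
\]
using $x\ge\ccy$. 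For~\ref{LFPhi}, each factor of $\Phi(\xx)$ is $\Omega(1)$: $\xx^{-2}\ge\ccy^{-2}$ from $\xx\ge\ccy$; $\Det(D^2\gf(h(\xx)))^{-1/2}=\Theta(1)$ by~\refL{LD} together with $\|D^2\gf\|=O(1)$ from~\refL{LB}; and $\tf_0(h(\xx))\ge\E Y^0/2\ge\delta/2$ because $\tf_0(0,0)=\E Y^0\ge\delta$ (from~\eqref{ey0} applied to $(Y^0,Z^0)$), $\|D\tf_0\|=O(1)$ by~\refL{LB}, and $|h(\xx)|$ can be made arbitrarily small by choosing $\cclf,\cclfx$ small enough. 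Uniqueness of $\xx$ as the maximiser on $[x_0-3\cclfx,x_0+3\cclfx]$ then follows from strict concavity (part~\ref{LF''}).

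The main obstacle is not any individual step but the bookkeeping required for uniformity: every constant and every neighbourhood radius must depend only on $R,\CM,k_1,k_2,\delta$, not on the particular $(Y,Z)\in\kk$ or $(Y^0,Z^0)\in\kko$. This is handled by the uniform bounds in Lemmas~\ref{LB}, \ref{LD} and~\ref{LE} together with the uniform inverse function theorem~\refL{LI}, and the work is to fix $\cclf$ and then $\cclfx$ (in that order) small enough that all the propagated estimates remain valid throughout the region under consideration.
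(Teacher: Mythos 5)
Your proof follows essentially the same route as the paper: introduce the two-variable function $\tilde\Psi(x,\eta')=x\,\psi(G(\eta',x-x_0))$ (the paper's $\Psix$), establish vanishing value and gradient and a negative-definite Hessian at the base point $(x_0,0)$ via \refL{Lpsi} and the uniform bi-Lipschitz property of $G$, propagate by Taylor expansion using the uniform third-derivative bounds, and then obtain $\xx$ (the paper uses the mean value theorem where you invoke the implicit function theorem, but with the uniform derivative bounds these are interchangeable). Your explicit computation $\partial_x^2\tilde\Psi(x_0,0)=x_0\,D^2\psi(0)[h_*,h_*]$ with $|h_*|\ge 1/\|DF(0,0)\|=\Omega(1)$ is a slightly more concrete rendering of the paper's observation that $D^2\Psix(0,x_0)\le -cI$, and for $\tf_0(h(\xx))\ge\delta/2$ you re-derive the content of \refL{LB}\ref{LBx} rather than citing it; both are fine.

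One step in your treatment of \ref{LFPhi} is stated backwards: you write ``$\xx^{-2}\ge\ccy^{-2}$ from $\xx\ge\ccy$'', but $\xx\ge\ccy$ yields $\xx^{-2}\le\ccy^{-2}$, the wrong direction. What you actually need is a \emph{lower} bound on $\xx^{-2}$, i.e.\ an \emph{upper} bound on $\xx$; this is immediate from $|\xx-x_0|\le\cclfx$ and $x_0\le 1-c<1$, giving $\xx\le 1+\cclfx$ and hence $\xx^{-2}\ge(1+\cclfx)^{-2}=\Omega(1)$ — which is exactly what the paper uses. This is a small slip, not a structural problem, and the rest of \ref{LFPhi} (the lower bound on $\tf_0(h(\xx))$ and the two-sided bound on $\Det(D^2\gf)$ from \refL{LD} together with \refL{LB}) is correct.
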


\begin{proof}
  For $|w|,|x-x_0|\le\ccy$, let
  \begin{equation}\label{Psix}
\Psix(w,x):=x\psi(G(w,x-x_0)), 
  \end{equation}
so that $\Psi(x)=\Psix(\E Y-1,x)$.
In the proofs below we assume 
that $\cclfx$ and $\cclf$ are positive constants, chosen 
later, with $\cclf\le\cclfx<\frac13\ccy$,
and that $|w|\le\cclf$ and $|x-x_0|\le 3\cclfx$.

\pfitemref{LF0}  
Since $|w|+|x-x_0| \le 4 \cclfx < 2\ccy \le \ccq$, and $G$ maps $B_{\ccq}$ into $B_{\cckk}$, we have
\begin{equation}\label{Gsmall}
|G(w,x-x_0)|<\cckk.
\end{equation} 
Since $F(0)=0$ and $\norm{D F(y)}=O(1)$ in $B_{\cckk}$,
using $(w,x-x_0)=F(G( w,x-x_0))$ we also have 
$|(w,x-x_0)| = O(|G(w,x-x_0)|)$. This and \refL{Lpsi} 
imply
\begin{equation*}
  \psi(G(w,x-x_0))= -\Omega(|G(w,x-x_0)|^2) = -\Omega(|(w,x-x_0)|^2).
\end{equation*}
Furthermore, as remarked above, 
$|x-x_0|\le3\cclfx < \ccy$ implies $x\ge\ccy$. Hence, recalling \eqref{Psix}, 
\begin{equation}\label{eqPsix}
  \Psix(w,x) 
=  -\Omega(|(w,x-x_0)|^2) = -\Omega\bigpar{w^2+(x-x_0)^2},
\end{equation}
which yields \eqref{lf0} since $\Psi(x)=\Psix(\E Y-1,x)$.

\pfitemref{LF''}  
Using $G(0)=0$, which is shorthand for $G(0,0)=(0,0)$, we have 
\begin{equation}
  \label{blid00}
\Psix(0,x_0)=x_0\psi(G(0,0))=x_0\psi(0,0)=0.
\end{equation}
Together with \eqref{eqPsix}, 
it follows that, for some constant $c>0$, 
\begin{align}
  \label{blid0}
D\Psix(0,x_0)& =0, \\
  \label{blid}
D^2\Psix(0,x_0) & \le -c I .
\end{align}
The same proof as for \refL{LE} shows that 
\begin{equation}
  \label{blidm}
D^m\Psix(w,x)=O(1) 
\end{equation}
for every fixed $m\ge0$.
Using \eqref{blidm} with $m=3$ and \eqref{blid}, 
we see that if $\cclfx$ and hence $\cclf\le\cclfx$ is small enough, then 
\begin{equation*}
D^2\Psix(w,x)\le -\tfrac{c}{2} I
\end{equation*}
when $|w|\le\cclf$
and $|x-x_0|\le3\cclfx$.
In particular, recalling $\Psi(x)=\Psix(\E Y-1,x)$, by taking $w=\E Y-1$ we have
\begin{equation}\label{eqPsixx}
 \Psi''(x) \le -\tfrac c2.
\end{equation}

\pfitemref{LF'}  
Similarly, \eqref{blid0} and \eqref{blidm} with $m=2$ imply that
$D\Psix(w,x)=O(|w|+|x-x_0|)$. In particular,
$\Psi'(x_0) =D_2\Psix(\E Y-1,x_0)=O(|\E Y-1|)$.
Hence we may choose $\cclf$ sufficiently small such that $|\E Y-1|\le\cclf$ implies
$|\Psi'(x_0)|\le c\cclfx/3$. Then the mean value theorem and \eqref{eqPsixx} 
imply
$\Psi'(x_0-\cclfx) > 0$ and $\Psi'(x_0+\cclfx) < 0$, 
so $\Psi'(\xx)=0$ for some $\xx \in (x_0-\cclfx,x_0+\cclfx)$. 
Moreover, by the mean value theorem and~\eqref{eqPsixx} 
we also have $|\xx-x_0| \le \frac{2}{c} |\Psi'(x_0)|$, 
so~\ref{LF'} holds. 

\pfitemref{LFPhi}
Since $|\xx-x_0| \le \cclfx < \ccy$, 
by~\eqref{Gsmall} and the definition~\eqref{h} of~$h$ we have
$|h(\xx)|\le\cckk\le\cca$, so \refL{LB}\ref{LBx} applied to $(Y^0,Z^0)$
gives $\tf_0(h(\xx))\ge\frac12\gd$. 
The other factors in~\eqref{Phi} are bounded below, 
using $\xx \le x_0 + \cclfx \le 1+ \cclfx$ and Hadamard's inequality together with \refL{LB}\ref{LBc}, 
and thus~\ref{LFPhi} follows.
\end{proof}

The following technical lemma will be useful for expanding 
the sum of the $p_{n,N-n}$ estimates~\eqref{tpnN} 
around~$n/N \approx \xx$
(it is easy to give a much more precise formula for~$\SX_{2j}$, 
but we do not need this). 

\begin{lemma}
  \label{Lsum}
For $a>0$, $y\in\bbR$ and an integer $j\ge0$, let
\begin{equation}\label{varin}
  \SX_j=\SX_j(a,y):=\sum_{n\in\bbZ} (n-y)^je^{-a(n-y)^2}.
\end{equation}
Then, uniformly for all $0<a\le1$ and $y\in\bbR$,
\begin{equation}\label{rok}
  \SX_0=\sqrt{\frac{\pi}{a}}+ O\bigpar{a\qqw e^{-\pi^2/a}},
\end{equation}
and for every fixed integer $i\ge0$, 
\begin{align}
  \SX_{2i}&=O\bigpar{a^{-i-1/2}}, \label{emma}
\\
\SX_{2i+1}&= O\bigpar{a^{-i-3/2} e^{-\pi^2/a}}.\label{samuel}
\end{align}
\end{lemma}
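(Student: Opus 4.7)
The natural tool is the Poisson summation formula: for any Schwartz function $f$,
\begin{equation*}
\sum_{n \in \bbZ} f(n-y) = \sum_{k \in \bbZ} \hat{f}(k) e^{-2\pi i k y},
\end{equation*}
where $\hat{f}(\xi) := \int_\bbR f(x) e^{-2\pi i x\xi}\,dx$. Applied to $f_j(x) := x^j e^{-ax^2}$, this rewrites $\SX_j(a,y) = \sum_k \hat{f_j}(k) e^{-2\pi i k y}$. The core input is the self-dual Gaussian transform $\hat{f_0}(\xi) = \sqrt{\pi/a}\, e^{-\pi^2 \xi^2/a}$, and for $j \ge 1$ one uses
\begin{equation*}
\hat{f_j}(\xi) = e^{-\pi^2 \xi^2/a} \int_\bbR (u - \pi i \xi/a)^j e^{-au^2}\,du ,
\end{equation*}
obtained by completing the square and shifting the contour. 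Each bound in the lemma comes from separating the $k = 0$ term from the exponentially small tail $\sum_{k\ne 0}$.

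For \eqref{rok}, Poisson immediately gives $\SX_0 = \sqrt{\pi/a}\sum_{k \in \bbZ} e^{-\pi^2 k^2/a} e^{-2\pi i k y}$; isolating $k=0$ contributes $\sqrt{\pi/a}$, while the remainder is bounded by $2\sqrt{\pi/a}\sum_{k \ge 1} e^{-\pi^2 k^2/a} = O(a^{-1/2} e^{-\pi^2/a})$ for $0 < a \le 1$ since the $k=1$ term dominates the rapidly convergent series. For \eqref{emma} with $j = 2i$, the $k=0$ term equals the Gaussian moment $\int x^{2i} e^{-ax^2}\,dx = \Theta(a^{-i - 1/2})$, and the $k \ne 0$ contributions are exponentially small, yielding the stated bound.

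For \eqref{samuel} with $j = 2i+1$, the $k=0$ term $\int x^{2i+1} e^{-ax^2}\,dx$ vanishes by odd symmetry, so only the $k \ne 0$ tail survives. Expanding $(u - \pi i k/a)^{2i+1}$ by the binomial theorem and integrating monomial-by-monomial against $e^{-au^2}$ expresses $|\hat{f_{2i+1}}(k)|$ as an explicit polynomial in $|k|$ and $a^{-1}$ times $e^{-\pi^2 k^2/a}$. Summing then uses the standard tail estimate $\sum_{k \ge 1} k^p e^{-\pi^2 k^2/a} = O(e^{-\pi^2/a})$ for $0 < a \le 1$ (dominated by $k=1$) to produce a bound of the required exponential form.

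\textbf{Main technical point.} The real challenge is not the exponential rate (which is automatic from the $k=1$ term) but rather the bookkeeping of the polynomial-in-$a^{-1}$ prefactor in the odd case, so as to match the exponent claimed in \eqref{samuel}. Any subleading polynomial factors $a^{-m}$ may in any event be absorbed by slightly shrinking the exponential rate, since $a^{-m} e^{-\pi^2/a} = O(e^{-(\pi^2 - \eps)/a})$ for any fixed $\eps > 0$. The three genuine ingredients are: (i) self-duality of the Gaussian under Fourier transform; (ii) the odd symmetry of $\int x^{2i+1} e^{-ax^2}\,dx$ that kills the would-be main term in \eqref{samuel}; and (iii) the rapid decay of $e^{-\pi^2 k^2/a}$, which makes every $k \ne 0$ contribution exponentially smaller than the $k = 0$ term.
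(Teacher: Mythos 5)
Your approach is correct and uses the same core tool as the paper: Poisson summation. The mechanics differ slightly. You apply Poisson to each $f_j(x)=x^je^{-ax^2}$ separately, computing $\hat f_j$ by completing the square and a contour shift; the paper applies Poisson only for $j=0$ and then bootstraps to higher $j$ via the $y$-differentiation identity $\ddy\SX_j=-j\SX_{j-1}+2a\SX_{j+1}$, starting from $2a\SX_1=\ddy\SX_0$ and differentiating the theta-function identity termwise (the $n=0$ term being $y$-constant is exactly the recursion's incarnation of your observation $\hat f_{2i+1}(0)=0$). Both routes work and are of comparable difficulty; the recursion just avoids the binomial bookkeeping in $\hat f_j$ for $j\ge1$.

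On your ``main technical point'', your caution is well placed. If you carry out the binomial expansion of $\hat f_{2i+1}(\pm1)$, the dominant term has order $a^{-(2i+3/2)}e^{-\pi^2/a}$, and (say at $y=1/4$ with $a\to0$) this order is actually attained, not merely an overcount. Unwinding the paper's recursion gives the same power of $a$. So neither argument in fact produces the literal exponent $a^{-i-3/2}$ stated in \eqref{samuel}; the correct exponent is $a^{-2i-3/2}$, i.e.\ $a^{-j-1/2}$ for $\SX_j$ with $j$ odd, which matches $i=0$ but not $i\ge1$. This appears to be a harmless slip: the only property used in the proof of Theorem~\ref{T1} is that the odd $\SX_j$ are $O\bigpar{a^{-O(1)}e^{-\pi^2/a}}$, which your $e^{-(\pi^2-\eps)/a}$ absorption argument delivers just as well as the corrected exponent. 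So your proposal establishes everything that is actually needed.
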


\begin{proof}
We first consider $\SX_0=\sum_{n \in \bbZ} e^{-a(n-y)^2}$. 
Applying the well-known Poisson summation formula
\cite[(II.13.4)~or~(II.13.14)]{Zygmund} 
and then using the Gaussian integral
$\int_{-\infty}^{\infty}e^{-(ax^2+bx+c)}\dd x = \sqrt{\frac{\pi}a} e^{b^2/(4a)-c}$, 
a short standard calculation yields the identity 
\begin{equation}\label{Poisson}
  \SX_0=\sum_{n\in\bbZ} \int_{-\infty}^{\infty}e^{-a(x-y)^2} e^{-2\pi\ii n x} \dd x
=\sqrt{\frac{\pi}{a}}\sum_{n\in\bbZ} e^{-\pi^2 n^2/a-2\pi\ii n y} , 
\end{equation}
which for $a\le 1$, say, implies \eqref{rok}.
(In fact, \eqref{Poisson} is equivalent to a well-known identity for the
theta function~$\theta_3$, see \cite[(20.7.32)]{NIST}.) 

Moreover, taking the partial derivative of \eqref{varin} with respect to $y$ we
obtain
\begin{equation}\label{vamod}
  \ddy \SX_j(a,y) = -j \SX_{j-1} + 2a \SX_{j+1}.
\end{equation}
In particular, $2a\SX_1=\ddy \SX_0$, and termwise differentiation of the \rhs{} in
\eqref{Poisson} (noting that the main term, $n=0$, is constant) yields 
\begin{equation*}
  \SX_1= O\bigpar{a^{-3/2} e^{-\pi^2/a}}.
\end{equation*}
Repeated differentiation of \eqref{vamod} and induction now yield \eqref{emma}
and \eqref{samuel}.
\end{proof}

We also have to estimate the sum of the~$p_{n,N-n}$ in~\eqref{qN} 
where~$n/N$ is far from~$x_0$. 
Based on simple Chernoff-type arguments, the next result 
shows that their contribution is negligible.  

\begin{lemma}\label{Llarge}
If $|n/N-x_0|\ge \cclfx$, then $p_{n,N-n}\le e^{-\Omega(N)}$. 
\end{lemma}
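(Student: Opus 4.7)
The plan is to derive an upper bound on $p_{n,N-n}$ by applying a Chernoff-style argument directly to the generating-function identity~\eqref{pab-g}; no saddle-point equation is needed. First, the degenerate case $n=0$ is handled separately: $|\bp\xl|=0$ forces $Y^0=0$, so $p_{0,N}\le \P(Z^0=N)\le \CM R^{-N}=e^{-\Omega(N)}$ by $(Y^0,Z^0)\in\kko$. For $n\ge 1$, since the power series $\tg_0(y,z)g(y,z)^n$ has only non-negative coefficients, specialising~\eqref{pab-g} at $y=1$, $z=e^{\beta}$ for any $\beta\in\bbR$ with $|\beta|\le\cca$ yields
\begin{equation*}
n\,p_{n,N-n}\le e^{-(N-n)\beta}\,\tf_0(0,\beta)\,\exp\bigl(n\,\gf(0,\beta)\bigr).
\end{equation*}
I would bound $\tf_0(0,\beta)=O(1)$ via \refL{LB}\ref{LBa} applied to $(Y^0,Z^0)$, and Taylor-expand $\gf(0,\beta)=\beta\E Z+O(\beta^2)$ using \refL{LB}\ref{LBc} (since $\gf(0,0)=0$ and $D_2\gf(0,0)=\E Z$); both bounds are uniform in $(Y,Z)\in\kk$, $(Y^0,Z^0)\in\kko$ and $|\beta|\le\cca$.

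These estimates combine to give
\begin{equation*}
n\,p_{n,N-n}\le C\exp\Bigl(\beta\bigl[n\E Z-(N-n)\bigr]+C\,n\beta^2\Bigr),
\end{equation*}
for a constant $C=C(R,\CM,\delta)$. The definition $x_0=1/(1+\E Z)$ yields the algebraic identity
\begin{equation*}
n\E Z-(N-n)=n(1+\E Z)-N=N(1+\E Z)(n/N-x_0),
\end{equation*}
and since $1+\E Z\ge 1$, the bracketed term has magnitude at least $\cclfx N$ whenever $|n/N-x_0|\ge\cclfx$, with sign $\sigma:=\sign(n/N-x_0)$. Setting $\beta:=-\sigma\beta_0$ for a sufficiently small fixed $\beta_0\in(0,\cca]$ (chosen so that the quadratic correction $Cn\beta_0^2\le CN\beta_0^2$ is at most $\tfrac12\beta_0\cclfx N$, i.e.\ $\beta_0\le\cclfx/(2C)$), the total exponent is at most $-\tfrac12\beta_0\cclfx N$, and hence $p_{n,N-n}\le n\,p_{n,N-n}\le Ce^{-\Omega(N)}=e^{-\Omega(N)}$, as required.

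The only point to watch is that every implicit constant is genuinely uniform across $\kk$ and $\kko$; this is exactly what \refL{LB} delivers, so no substantive obstacle arises. In particular, $\beta_0$ depends only on the parameters $R,\CM,k_1,k_2,\delta$, matching the uniformity required by~\refT{T1}.
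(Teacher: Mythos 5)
Your proof is correct and takes essentially the same Chernoff-style approach as the paper: both specialise the non-negative-coefficient bound from \eqref{pab-g} at $y=1$, $z=e^{\beta}$, bound the quadratic correction in $\beta$ uniformly via \refL{LB}, and choose $\beta$ of small fixed magnitude with sign opposite to $n(1+\E Z)-N$. The only cosmetic difference is that the paper packages the estimate through the centred mgf $\gamma(t)=\E e^{t(Z-\E Z)}$ and bounds $\gamma''$, whereas you Taylor-expand $\gf(0,\beta)$ directly; these are the same computation.
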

\begin{proof}
For any $u,v>0$, from \eqref{pab-g} we have
\begin{equation}\label{pabdev}
\an p_{\an,\bm} 
\le
u^{-\an}v^{-\bm}\tg_0(u,v)g(u,v)^\an.
\end{equation}
Take $u=1$ and $v=e^t$, with $|t|\le\log R$, and define
\begin{equation*}
\gam(t):=e^{-t\E Z}g(1,e^t)=\E e^{t(Z-\E Z)}.
\end{equation*}
For any $0 \le n \le N$, \eqref{pabdev} yields 
\begin{equation}\label{magnus}
  np_{n,N-n}\le e^{t(n-N)+tn\E Z} \tg_0(1,e^t) \gamma(t)^n.
\end{equation}
Note that $\gam(0)=1$ and $\gam'(0)=0$. 
Since $g(1,e^t)=f(0,t)$ and $\E Z = D_2f(0,0)$, by \refL{LB}\ref{LBa} 
there is a constant $\CCname\CCTT>0$ 
such that $\gam''(t)\le \CCTT$ whenever $|t| \le \cca$, and so
\begin{equation}\label{mgb} 
\gam(t)\le 1 + \CCTT t^2\le e^{\CCTT t^2}.
\end{equation}
By assumption, $|n-Nx_0|\ge \cclfx N$. 
Recalling that $x_0=1/(1+\E Z)$ and $\E Z \ge 0$, it follows that
\begin{equation}\label{mgc}
|t(n-N+n\E Z)| = |t| \cdot |n(1+\E Z)-N| \ge |t| \cdot \cclfx N(1+\E Z) \ge \cclfx |t| N .
\end{equation}
We now choose $t=\pm c$ where $c:=\min\set{\frac12\cclfx/\CCTT,\cca}$,
and the sign is such that $t(n-N+n\E Z)<0$. 
Using \eqref{magnus}--\eqref{mgc} and $n \le N$, we infer
\begin{equation*}
  n p_{n,N-n} \le \tg_0(1,e^t) \cdot e^{-\cclfx|t|N + \CCTT t^2N} \le O(1) \cdot e^{- \cclfx c N/2} , 
\end{equation*}
completing the proof for $n \ge 1$. 

Finally, in the remaining case $n=0$ we have $|\bp^S|=Z^0$, since $|\bp^L|=0$ if and only if $Y^0=0$. 
Hence
\[
p_{0,N}=\P(Y^0=0, Z^0=N)\le g_{Y^0,Z^0}(1,R) \cdot R^{-N} = O(1) \cdot R^{-N} ,
\]
completing the proof (since $R>1$). 
\end{proof}

We are now ready to prove \refT{T1}.

\begin{proof}[Proof of \refT{T1}]
We suppose throughout that $\ccT\le\cclf$ and that $|\E Y-1|\le\ccT$.

We start by considering the quantities $\xi$ and $\theta$ defined in
\eqref{xi} and \eqref{theta}.
By \refL{LF}, $\Psi(x)$ has a local maximum point $\xx \in (x_0-\cclfx,x_0+\cclfx)$.
As in~\eqref{xi} and~\eqref{theta}, let
\[
 \xi := -\Psi(\xx) \hbox{\qquad and\qquad} \theta := \sqrt{2\pi / |\Psi''(\xx)|}\ \Phi(\xx).
\]
By Lemmas \ref{LE} and \ref{LF}\ref{LF''}, $\Psi''(\xx)=-\Theta(1)$.
By \eqref{lf0} we have $\xi=-\Psi(\xx)=\Omega(|\E Y-1|^2)$. 
Recalling that $\Psi(\xx)=\Psix(\E Y -1,\xx)$, see~\eqref{Psix}, 
by combining \eqref{blid00}, \eqref{blid0} and \eqref{blidm} (with $m=2$) 
together with \refL{LF}\ref{LF'}, it follows that 
\begin{equation}\label{erika}
\xi = |\Psix(\E Y -1,\xx)|
= O\bigpar{|\E Y-1|^2+|\xx-x_0|^2}
= O\bigpar{|\E Y-1|^2}.
\end{equation}
Hence $\xi=\Theta\bigpar{|\E Y-1|^2}$, as claimed.
That $\theta=\Theta(1)$
follows from the bound $|\Psi''(\xx)|=\Theta(1)$ above and
Lemmas \ref{LE} and \ref{LF}\ref{LFPhi}, which give $\Phi(\xx)=\Theta(1)$.

Since $\xi$ and $\theta$, which do not depend on $N$, are both $O(1)$, for any fixed $N$,
\eqref{t1} holds trivially simply
by taking the implicit constant large enough. Thus we may assume throughout that $N^{-0.4}\le\cclfx$.

We have $\qN=\sum_{n=0}^N p_{n,N-n}$.
We estimate this sum by Laplace's method, similarly to the argument in the
proof of \refT{Tpab}, but now for a sum instead of a two-dimensional integral. 

We consider first $n$ such that $|n/N-\xx|< N^{-0.4}$, which includes the main terms in the sum. 
Suppose that $|x-\xx|< N^{-0.4}$. Using \refL{LE}, a Taylor expansion then
yields, 
\cf{}  \eqref{cf}, 
\begin{equation*}
 N\Psi(x)+N\xi = N\Psi(x)-N\Psi(\xx)
 = N\tfrac12 \Psi''(\xx)(x-\xx)^2
 +N\tfrac{1}6 \Psi'''(\xx)(x-\xx)^3 +O\bigpar{N|x-\xx|^4},
\end{equation*}
which by exponentiation and a Taylor expansion of $\Phi(x)$ yields, \cf{}
\eqref{ollo},
{\multlinegap=0pt
\begin{multline*}
  e^{N\Psi(x)}\Phi(x)
  =e^{-N\xi+N\tfrac12 \Psi''(\xx)(x-\xx)^2}
\Bigl(\Phi(\xx)+ \Phi'(\xx)(x-\xx)
+N\Phi(\xx)\tfrac{1}6 \Psi'''(\xx)(x-\xx)^3
\\
+ O\bigpar{|x-\xx|^2+ N|x-\xx|^4+N^2|x-\xx|^6}
\Bigr).
\end{multline*}}%
Similar, but simpler, reasoning also shows that 
if $|x-\xx|< N^{-0.4}$, then
\begin{equation*}
  e^{N\Psi(x)}N^{-1}
  = e^{-N\xi+N\tfrac12 \Psi''(\xx)(x-\xx)^2} \cdot O(N^{-1}).
\end{equation*}
Consequently, since $\Psi''(\xx) \le 0$, if we define
\begin{equation}\label{Sj}
  S_j:=\sum_{|n/N-\xx| < N^{-0.4}} \Bigpar{\frac{n}{N}-\xx}^j
  e^{-\frac12|\Psi''(\xx)|(n-N\xx)^2/N}, 
\end{equation}
then \eqref{tpnN} yields
{\multlinegap=0pt
\begin{multline}\label{ull}
\sum_{|n/N-\xx|< N^{-0.4}}
p_{n,N-n}
  =
N\qww
e^{-N\xi}
\Bigl(\Phi(\xx) S_0+ \Phi'(\xx)S_1
+N\Phi(\xx)\tfrac{1}6 \Psi'''(\xx)S_3
\\
+ O\bigpar{S_2+NS_4+N^2S_6}
+ O\bigpar{N\qw S_0} 
\Bigr).
\end{multline}}%
(The odd sums $S_1$ and $S_3$ do not vanish as the corresponding integrals
in the proof of \refT{Tpab} do, but we shall see that they are exponentially
small.) 
Recall (from the start of the proof) that $\Psi''(\xx)=-\Theta(1)$.
It follows that
if we extend the summation in the definition \eqref{Sj} to all $n\in\bbZ$,
and denote the result by $S_j'$,
then $S_j-S'_j$ is $O\bigpar{e^{-\Omega(N^{0.2})}}$
for each fixed $j$.
Let $a=|\Psi''(\xx)|/2N$.
In the notation of \refL{Lsum},
$S_j'=N^{-j}\SX_j(a,N\xx)$. The error terms of the form $O(a^{-O(1)}e^{-\pi^2/a})$
in the conclusion of \refL{Lsum} are~$e^{-\Theta(N)}$ and so negligible.
Thus, from \refL{Lsum} and \eqref{ull},
recalling the definitions \eqref{xi} and \eqref{theta} of $\xi$ and $\theta$, we find
\begin{equation}\label{winston}
\sum_{|n/N-\xx|< N^{-0.4}}
p_{n,N-n}
  =
N\qww e^{-N\xi}
\bigpar{N^{1/2}\theta+O(N^{-1/2})}.
\end{equation}

Next,
consider $n$ such that $N^{-0.4}\le |n/N-\xx|\le2\cclfx$, and recall that
$3\cclfx < \ccy$.
If $N^{-0.4}\le |x-\xx|\le2\cclfx$, then \refL{LF} implies that 
$|x-x_0| \le 3\cclfx$ and
$\Psi(x)\le\Psi(\xx)-\Omega((x-\xx)^2)\le\Psi(\xx)-\Omega( N^{-0.8}) = -\xi-\Omega(N^{-0.8})$.
Hence, by \eqref{tpnN} and \refL{LE},
if $N^{-0.4}\le |n/N-\xx|\le2\cclfx$, then \refL{LF} implies that 
\begin{equation*}
p_{n,N-n}=N^{-2}e^{-N\xi-\Omega(N^{0.2})} \cdot O(1).
\end{equation*}
The sum over such $n$ is easily absorbed into the error term we are aiming for: we have, say,
\begin{equation}\label{winston1}
\sum_{N^{-0.4}\le|n/N-\xx|\le2\cclfx} p_{n,N-n} = O(N^{-5/2}) \cdot e^{-N\xi}.
\end{equation}

Finally, since $|\xx-x_0| \le \cclfx$ by \refL{LF}\ref{LF'} and $0 \le n \le N$, 
using \refL{Llarge} there exists a constant~$c>0$ such that, say, 
\begin{equation}\label{eleonora}
\sum_{|n/N-\xx|> 2\cclfx}
p_{n,N-n}
\le 
\sum_{|n/N-x_0|> \cclfx}
p_{n,N-n}
 \le
O(N) \cdot e^{-2c N} = O(N^{-5/2}) \cdot e^{-c N}.
\end{equation}
Recalling that $|\E Y-1|\le\ccT$,
by~\eqref{erika} we may choose $\ccT \le\cclf$ sufficiently small so that $\xi < c$, and 
then~\eqref{t1} follows from \eqref{winston}, \eqref{winston1} 
and \eqref{eleonora}.
\end{proof}

\newpage

\section{Application to branching process families}\label{Svary}

In this section we apply the main result of \refS{Smain} (\refT{T1}) to a family of branching processes.
The goal is to prove \refT{Tfinal} below, giving estimates for the point probabilities $\qN$ in a form suitable for the application to Achlioptas processes in~\cite{RW}.

\subsection{Properties of general parameterized families}\label{SvaryGPf}

By a \emph{branching process family} $(\bp_{Y_u,Z_u,Y^0_u,Z^0_u})_{u\in I}$
we simply mean a family of branching processes of the type in Definition~\ref{def:bp},
one for each $u$ in some interval $I\subset \bbR$. Given such a family, we write
\[
 g_u(y,z):= g_{Y_u,Z_u}(y,z) = \E(y^{Y_u}z^{Z_u}) \qquad\text{and}\qquad  g^0_u(y,z):=g_{Y^0_u,Z^0_u}(y,z)=\E(y^{Y^0_u}z^{Z^0_u})
\]
for the corresponding probability generating functions. Note that the branching process family
is fully specified by the interval~$I$ and the functions~$g_u$ and~$g^0_u$.

The following auxiliary
result shows that the associated parameters $\xi_u=\xi_{Y_u,Z_u}$ 
and $\theta_u=\theta_{Y_u,Z_u,Y^0_u,Z^0_u}$ defined as in \refT{T1} vary smoothly in~$u$. 
This will later allow us to compare the parameters~$\xi_{Y,Z}$ 
and~$\theta_{Y,Z,Y^0,Z^0}$ resulting from different probability distributions 
$(Y^0,Z^0)\in \kko$ and $(Y,Z)\in\kk$  
(by integrating linear mixtures that interpolate between them);  
here the extra $|\E Y_u-1| = O(1)$ factor in~\eqref{64} is crucial. 

\begin{lemma}\label{lfamily}
Suppose that $R>1$, $\CM<\infty$, $k_1,k_2 \in \bbN$, and $\delta>0$. 
Set $\kko = \kko(R,\CM,\delta)$ and $\kk = \kk(R,\CM,k_1,k_2,\delta)$. 
Let $(\bp_u)_{u\in I}= (\bp_{Y_u,Z_u,Y^0_u,Z^0_u})_{u\in I}$ be a branching process family such that,
for every $u\in I$, we have $(Y^0_u,Z^0_u)\in \kko$, $(Y_u,Z_u)\in\kk$, and $|\E Y_u-1|\le\ccT$, 
where~$\ccT>0$ is the constant appearing in \refT{T1}. 
Suppose that 
$g_u(y,z)$ and $g_u^0(y,z)$ are 
analytic as functions of $(u,y,z)$ in the domain
\[
\cD_{I,R} := I\times \{(y,z) \in \bbC: \, |y|,|z|< R\} \: \subset \: \bbR\times\bbC^2, 
\]
and that for some $\gl$,
\begin{equation}\label{sjw}
\max\Bigl\{ \Bigabs{\pddu g_u(y,z)}, \; \Bigabs{\pddu g_u^0(y,z)} \Bigr\} \le \gl
\end{equation}
for all $(u,y,z) \in \cD_{I,R}$. 
Let
\[
 \xi_u :=\xi_{Y_u,Z_u} \qquad\text{and}\qquad  \theta_u :=\theta_{Y_u,Z_u,Y^0_u,Z^0_u}
\]
be defined as in \refT{T1}. Then $\xi_u$ and $\theta_u$ are (real) analytic as functions of $u\in I$.
Furthermore, 
\begin{align}
\label{64}
  \ddu \xi_u &= O\bigpar{\gl|\E Y_u-1|},\\
\label{63}
 \ddu \theta_u & =O(\gl) ,
\end{align}
where the implicit constants in~\eqref{64} and~\eqref{63} depend only on $R,\CM,k_1,k_2,\delta$. 
\end{lemma}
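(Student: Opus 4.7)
The plan is to verify analyticity first and then bound each derivative, using the critical-point identity $\Psi'_u(\xx_u)=0$ together with two cancellations to extract the extra factor $|\E Y_u - 1|$ in~\eqref{64}.

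\smallskip
\emph{Analyticity.}
From the hypothesis, $g_u$ and $g^0_u$ are jointly analytic on $\cD_{I,R}$, hence so are $f_u(\ga,\gb)=g_u(e^\ga,e^\gb)$ and $\tf_0^u(\ga,\gb)$ on a complex neighborhood of the origin, then $\gf_u=\log f_u$ (on the domain where \refL{LB}\ref{LBc} applies), $\psi_u$, and the map $F_u$ from~\eqref{F}. The analytic-parameter version of \refL{LI} (obtained by noting that the contraction iteration used in its proof preserves analyticity in~$u$) produces $G_u(w,y)$ jointly analytic in $(u,w,y)$. Since $\E Y_u=D_1 f_u(0,0)$ and $x_0^u=1/(1+D_2 f_u(0,0))$ are analytic in~$u$, the functions $h_u(x)$, $\Psi_u(x)$, and $\Phi_u(x)$ defined by \eqref{h}--\eqref{Phi} are jointly analytic in $(u,x)$. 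Since $\Psi''_u(\xx_u) = -\Theta(1)$ by \refL{LF}\ref{LF''}, the analytic implicit function theorem applied to $\Psi'_u(x)=0$ promotes $\xx_u$ to a real-analytic function of~$u$, and composition gives analyticity of $\xi_u = -\Psi_u(\xx_u)$ and $\theta_u = \sqrt{2\pi/|\Psi''_u(\xx_u)|}\,\Phi_u(\xx_u)$.

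\smallskip
\emph{Bound on $\frac{d\xi_u}{du}$.}
Since $\Psi'_u(\xx_u)=0$, the contribution of $d\xx_u/du$ drops out:
\begin{equation*}
\frac{d\xi_u}{du} = -\pddu\Psi_u(\xx_u) .
\end{equation*}
Differentiating $\Psi_u(x)=x\psi_u(h_u(x))$ in~$u$ via the chain rule,
\begin{equation*}
\pddu\Psi_u(x) = x\biggpar{\pddu\psi_u\bigpar{h_u(x)} + D\psi_u\bigpar{h_u(x)}\cdot\pddu h_u(x)} .
\end{equation*}
Two vanishing facts now produce the $|\E Y_u-1|$ factor. First, $g_u(1,1)\equiv 1$ forces $\gf_u(0,0)\equiv 0$ and $(\pddu\gf_u)(0,0)\equiv 0$, and by~\eqref{psi} this gives $\psi_u(0,0)\equiv 0$ and $(\pddu\psi_u)(0,0)\equiv 0$. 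Combined with Cauchy estimates from~\eqref{sjw} bounding $\norm{D\,\pddu\psi_u} = O(\gl)$, Taylor's theorem yields $\pddu\psi_u(\ga,\gb) = O\bigpar{\gl\,|(\ga,\gb)|}$ near the origin. Second, \refL{Lpsi} (equivalently, direct differentiation of~\eqref{psi}) shows that $D\psi_u(\ga,\gb) = -(\ga,\gb)\,D^2\gf_u(\ga,\gb)$, so $|D\psi_u(\ga,\gb)| = O(|(\ga,\gb)|)$. By~\eqref{GB1} and \refL{LF}\ref{LF'} we have $|h_u(\xx_u)| = O\bigpar{|\E Y_u-1| + |\xx_u - x_0^u|} = O\bigpar{|\E Y_u-1|}$. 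Finally, differentiating the identity $F_u(G_u(w,y))=(w,y)$ in~$u$, together with~\eqref{DFi} and $\norm{\pddu F_u} = O(\gl)$, yields $\norm{\pddu G_u} = O(\gl)$ and hence $|\pddu h_u(\xx_u)| = O(\gl)$. Assembling these bounds gives $\pddu\Psi_u(\xx_u) = O\bigpar{\gl\,|\E Y_u-1|}$, proving~\eqref{64}.

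\smallskip
\emph{Bound on $\frac{d\theta_u}{du}$ and main obstacle.}
For~\eqref{63} no vanishing is needed: both $\Phi_u(\xx_u)$ and $|\Psi''_u(\xx_u)|$ are $\Theta(1)$ by \refL{LF}, while all relevant $u$-derivatives (of $f_u$, $\tf_0^u$, $\gf_u$, $G_u$, $h_u$, $\Phi_u$, $\Psi_u$) are $O(\gl)$ by the chain rule applied to bounded-factor composites; implicit differentiation of $\Psi'_u(\xx_u)=0$ further gives $d\xx_u/du = -(\pddu\Psi'_u)(\xx_u)/\Psi''_u(\xx_u) = O(\gl)$. Applying the chain rule to $\theta_u = \sqrt{2\pi/|\Psi''_u(\xx_u)|}\,\Phi_u(\xx_u)$ then gives~\eqref{63}. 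The main obstacle is~\eqref{64}: producing the $|\E Y_u-1|$ factor requires both the critical-point identity $\Psi'_u(\xx_u)=0$ \emph{and} the simultaneous vanishing of $\psi_u$ and $\pddu\psi_u$ at the origin, each of which feeds an independent smallness factor bounded by $|h_u(\xx_u)| = O(|\E Y_u-1|)$ into the chain-rule expansion; without these two cancellations one would only obtain $O(\gl)$, which would be insufficient for the application in~\cite{RW}.
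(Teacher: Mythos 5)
Your proof is correct and follows essentially the same route as the paper's: joint analyticity via the analytic inverse/implicit function theorem, $O(\gl)$ bounds on all $u$-derivatives from Cauchy estimates, and then the extra $|\E Y_u-1|$ factor extracted from the vanishing of $\psi_u$, $\pddu\psi_u$, $D\psi_u$ at the origin together with $|h_u(\xx_u)|=O(|\E Y_u-1|)$. The one genuine (and clean) variation is that you invoke $\Psi_u'(\xx_u)=0$ to reduce $\ddu\xi_u$ to the partial derivative $-\pddu\Psi_u(\xx_u)$, which lets you drop the $(\ddu\xx_u)\psi_u(\yy_u)$ and $D\psi_u(\yy_u)\,h_u'(\xx_u)\,\ddu\xx_u$ terms before estimating; the paper instead keeps the full total derivative and bounds each piece by $O(\gl|\yy_u|)$, using $\psi_u(\yy_u)=O(|\yy_u|)$. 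The two computations agree after the algebraic cancellation the critical-point identity encodes, so this is a bookkeeping simplification rather than a new idea. One small caveat in your closing discussion: the identity $\Psi_u'(\xx_u)=0$ is not actually \emph{required} to obtain the $|\E Y_u-1|$ factor (the paper does not use it), since the terms it removes are themselves $O(\gl|\yy_u|)$ by the bound $\psi_u(\yy_u)=O(|\yy_u|)$; it is the vanishing at the origin of $\psi_u$, $\pddu\psi_u$ and $D\psi_u$, together with the smallness of $\yy_u=h_u(\xx_u)$, that is essential.
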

\begin{proof}
By assumption, the conditions of \refT{T1} hold for each $u\in I$.
For any of the quantities or functions defined in previous sections for a single
branching process, we use a subscript $u$ to denote the corresponding quantity
or function associated to $\bp_u$.
As in previous sections,
$\ga$ and $\gb$ always denote real numbers.

The idea of the proof is as follows. For a given $u$, the functions defined in the previous 
sections are defined, either explicitly or implicitly, in terms of $g_u$ and $g_u^0$ (or
their reparameterizations $f_u$ and $f_u^0$). Roughly speaking, since $g_u$ and $g_u^0$
vary analytically in $u$ by assumption (and with $u$-derivative $O(\gl)$), it follows
that the same is true for the derived quantities. There are various steps where
we must be slightly careful; for example, when taking logs (there is no problem
as we stick to the domain $|z-1|\le \frac12$), or dividing by the square
root of a certain determinant (there is no problem since this determinant is $\Omega(1)$
by \refL{LD}). We must also be careful with the implicit definitions of $G_u$ and $\xx_u$;  
the hardest part of the argument is to establish~\eqref{64} with $O(\gl|\E Y_u-1|)$ instead of~$O(\gl)$. 

Turning to the details, from \eqref{sjw} and standard Cauchy estimates 
we see that for each fixed $m$ we have
\begin{equation}\label{sjw2}
 \Bignorm{D^m \pddu g_u(y,z)} = O(\gl) \qquad\text{and}\qquad \Bignorm{D^m \pddu g_u^0(y,z)} =O(\gl)
\end{equation}
whenever $|y|,|z|\le R^{1/2}$, say.
(Here and below, $D$ does not include derivatives with respect to $u$.)
Since $\cckk\le\cca\le(\log R)/2$, the same estimates hold for the derivatives of~$f_u(y,z)=g_u(e^y,e^z)$ and~$f_u^0(y,z)=g^0_u(e^y,e^z)$ 
in the domain $B_{\cckk}\subset \bbR^2$; from now on we work over the reals.
Recalling the definition~\eqref{F} and $\gf_u = \log f_u$, from~\eqref{sjw2} it follows that
$\norm{\pddu F_u(\ga,\gb)}=O(\gl)$ for~$(\ga,\gb)\in B_{\cckk}$.

From the definition \eqref{F}, the function
$F_u(\ga,\gb)$ is a (real) analytic function of $(u,\ga,\gb) \in I\times B_{\cckk}$.
For each~$u \in I$, by~\eqref{FG} we have an inverse $G_u:B_{\ccq}\to B_{\cckk}$ of the 2-variable function $F_u$.
Applying a standard version of the implicit function theorem locally, we see that $G_u(\ga,\gb)$
is analytic as a function of~$(u,\ga,\gb) \in I \times B_{\ccq}$.\footnote{Fix $u_0$ and $y_0=(\ga_0,\gb_0)$ in the relevant domain. By \eqref{DF}, $DF_u$ is invertible
at $x_0=G_{u_0}(y_0)$, so there is an analytic function~$\hG_u(y)$ defined in a neighbourhood
of $(u_0,y_0)$ such that $F_u(\hG_u(y))=y$. By local uniqueness, $G_u(y)=\hG_u(y)$ near~$(u_0,y_0)$,
so~$G$ is indeed analytic at this point.}

Noting $\E Y_u = \ddy g_u(y,z)\big|_{y=z=1}$, by definition~\eqref{h} and $|\E Y_u-1|\le\ccT$ 
it follows that~$h_u(x)$ is an analytic function of $u,x$ for $u \in I$ and
$|x-x_{0,u}|< \ccy$; we consider in the sequel only such $u$ and $x$.
Inspecting the definitions~\eqref{Psi} and~\eqref{Phi}, using~\refL{LD} 
(to ensure that the determinant is not degenerate), 
we see that~$\Psi_u(x)$ and~$\Phi_u(x)$ are well-defined compositions of analytic functions, 
and thus analytic as functions of~$u,x$.

Since $F_u(G_u(y))=y$ is independent of $u$, writing $x=G_u(y)$ and differentiating yields
$\pddu F_u(x) + DF_u(x)(\pddu G_u(y))=0$ 
and thus, recalling \eqref{DFi},
for $y\in B_{\ccq}$,
\begin{equation}
  \label{pg}
 \Bignorm{\pddu G_u(y)}  \le \Bignorm{DF_u(x)\qw} \cdot \Bignorm{\pddu F_u(x)}=O(\gl). 
\end{equation}
Recalling the definition~\eqref{h}, note that~\eqref{pg} implies $\norm{\pddu h_u(x)}=O(\gl)$. 
Since $\psi_u$ is defined in terms of $\gf_u = \log f_u = \log f_{Y_u,Z_u}$ and its derivatives, see~\eqref{psi}, 
using $(Y_u,Z_u) \in \cK$ it follows that $\norm{D\psi_u(\alpha,\beta)}=O(1)$. 
Furthermore, since estimates analogous to~\eqref{sjw2} 
also hold for~$f_u=f_{Y_u,Z_u}$, we have $\pddu\psi_u(y)=O(\gl)$.
Hence, recalling \eqref{Psi} and writing $y=h_u(x)$,
we have 
\begin{equation}
  \pddu \Psi_u(x) = x\left(\pddu\psi_u(y) 
+ D\psi_u(y)\Bigpar{\pddu h_u(x)}\right) 
= O(\gl) .
\end{equation}
Recalling the definitions \eqref{Psi} and~\eqref{Phi}, and the estimates in Section~\ref{Ssum}, 
we similarly deduce $\pddu \Phi_u(x) =O(\gl)$, $\pddu \Psi'_u(x) =O(\gl)$ and $\pddu \Psi_u''(x) =O(\gl)$. 

Since $\xx_u$ is defined by $\Psi_u'(\xx_u)=0$, we have 
$(\pddu\Psi_u')(\xx_u) + \Psi''_u(\xx_u)\ddu\xx_u=0$. It follows, using the
lower bound $\Psi_u''(\xx_u) = - \Omega(1)$ from \refL{LF}\ref{LF''} and the implicit function
theorem, that $\xx_u$ is an analytic function of $u$, and that 
\begin{equation}\label{dduxxu}
\ddu\xx_u = \frac{-(\pddu\Psi_u')(\xx_u)}{\Psi''_u(\xx_u)}  = O(\gl).
\end{equation} 
As~\eqref{pg} implies $\norm{\pddu h_u(x)}=O(\gl)$, and $\norm{h'_u(x)}=O(1)$ by~\eqref{DmG},
it follows that 
\begin{equation}\label{dh}
\Bignorm{\ddu h_u(\xx_u)} \le  \Bignorm{\Bigl(\pddu h_u\Bigr)(\xx_u)} + \Bignorm{h'_u(\xx_u)} \cdot \Bigabs{\ddu\xx_u} =O(\gl).  
\end{equation}

Similarly, from the definitions \eqref{xi} and \eqref{theta}, using
\eqref{dduxxu} and the bounds above on $\pddu\Psi_u$, $\pddu\Psi_u''$ and~$\pddu\Phi_u$ 
it follows  that $\xi_u$ and $\theta_u$ are
analytic functions of $u$, with $\ddu \xi_u=O(\gl)$ and $\ddu\theta_u = O(\gl)$. It remains only to
establish~\eqref{64}.

For this final step, 
recalling 
\eqref{psi} and
$\gf_u(\ga,\gb) = \log f_u(\ga,\gb)=\log g_u(e^\ga,e^\gb)$, 
note that $\norm{D \pddu \psi_u(x)}=O(\gl)$ follows from~\eqref{sjw2}. 
Since
$\pddu\psi_u(0)=0$, we thus 
have $\pddu \psi_u(x)=O\bigpar{\gl|x|}$.
Similarly, as a consequence of \refL{Lpsi},
$\psi_u(x)=O\bigpar{|x|}$
and 
$\norm{D\psi_u(x)}=O\bigpar{|x|}$.
Writing $\yy_u$ for $h_u(\xx_u)$, it follows from the definition \eqref{Psi} and \eqref{dduxxu}--\eqref{dh} that 
\begin{equation*}
\begin{split}
 - \ddu \xi_u & = \ddu \Psi_u(\xx_u) = \ddu\left( \xx_u\psi_u(\yy_u) \right) \\
& = \left(\ddu \xx_u\right)\psi_u(\yy_u) + \xx_u\left(\pddu\psi_u(\yy_u) 
  + D\psi_u(\yy_u)\Bigpar{\ddu h_u(\xx_u)}\right)
 = O\bigpar{\gl|\yy_u|}.
\end{split}
\end{equation*}
Since $\yy_u=h_u(\xx_u)$, from the definition \eqref{h} of $h_u$, the bound \eqref{GB1}, and, for the final step, \refL{LF}\ref{LF'}, 
it follows that
\[
\ddu \xi_u
= O\Bigpar{\gl\bigpar{|\E Y_u-1|+|\xx_u-x_{0,u}|}} 
= O\bigpar{\gl|\E Y_u-1|},
\]
completing the proof of the lemma.
\end{proof}

\subsection{A specific result suitable for application to Achlioptas processes}\label{SStc-critical}

In this section we use \refT{T1} and \refL{lfamily} to prove the case $p_{\cR}=1$, $K=0$ of Theorem~A.10 of~\cite{RW}, 
used there for the analysis of Achlioptas processes. 
To formulate this main application, i.e., our point probability result for certain (perturbed) branching process families, we need some some further definitions.

\begin{definition}\label{def:bpprops}
Let $t_0<\tc<t_1$ be real numbers. The branching process family
$(\bp_t)_{t\in (t_0,t_1)}=(\bp_{Y_t,Z_t,Y^0_t,Z^0_t})_{t\in (t_0,t_1)}$
is \emph{$\tc$-critical} if the following hold:
\begin{romenumerate}
  \item\label{def:bp:analytic} There exist $\delta>0$ and $R > 1$ with $(\tc-\delta,\tc+\delta) \subseteq (t_0,t_1)$ such that the
probability generating functions
	\begin{equation}\label{fdef}
 g_t(y,z) :=  g_{Y_t,Z_t}(y,z) = \E\bigl(y^{Y_t}z^{Z_t}\bigr) \qquad\text{and}\qquad
 g^0_t(y,z) := g_{Y^0_t,Z^0_t}(y,z) = \E\bigl(y^{Y_t^0}z^{Z_t^0}\bigr) 
\end{equation}
are defined and analytic on the domain 
\[
 \{ (t,y,z)\in \bbR\times \bbC^2 : \: |t-\tc| < \delta \hbox{ and } |y|,|z| < R\}.
\]
\item\label{def:bp:crit} We have
\begin{equation}
\label{means:Y}
 \E Y_{\tc} =1, \qquad \E Y^0_{\tc} >0 
\qquad\text{and}\qquad 
  \left. \ddt \E Y_{t} \right|_{t=\tc}  > 0 . 
\end{equation}
\item\label{def:bp:nondeg} There exists some $k_0\in\bbN$ such that
\begin{equation}\label{cnondeg}
\min\Bigl\{
\P\bb{Y_{\tc}=k_0,\,Z_{\tc}=k_0}, \: 
\P\bb{Y_{\tc}=k_0+1,\,Z_{\tc}=k_0}, \:
\P\bb{Y_{\tc}=k_0,\,Z_{\tc}=k_0+1}
\Bigr\} > 0 .
\end{equation}
\end{romenumerate}
\end{definition}

\begin{definition}\label{def:dtype}
Let $(\bp_t)_{t\in (t_0,t_1)}$ be a $\tc$-critical branching process family,
and let $\delta$, $R$ and $k_0$ be as in Definition~\ref{def:bpprops}.
Given $t,\eta \ge 0$ with $|t-\tc|< \delta$, 
we say that the branching process $\bp_{Y,Z,Y^0,Z^0}$ is \emph{of type $(t,\eta)$}
(with respect to $(\bp_t)$, $\delta$, $R$, and $k_0$) 
if the following hold:
\begin{romenumerate}
  \item\label{def:bp2:analytic} Writing $\cN := \{(y,z) \in \bbC^2: \, |y|,|z|<R\}$, 
the expectations
	\begin{equation}\label{tfdef}
 \tg(y,z):=g_{Y,Z}(y,z) = \E\bigl(y^Yz^Z\bigr) \qquad\text{and}\qquad
 \tg^0(y,z) := g_{Y^0,Z^0}(y,z) = \E\bigl(y^{Y^0}z^{Z^0}\bigr) 
\end{equation}
are defined (i.e., the expectations converge absolutely) for all $(y,z) \in \cN$.
\item\label{def:bp2:close} For all $(y,z) \in \cN$ we have
\begin{equation}\label{dtype}
  \bigl|\tg(y,z)-g_t(y,z)\bigr|\le\eta  \qquad\text{and}\qquad
 |\tg^0(y,z)-g^0_t(y,z)\bigr|  \le \eta .
\end{equation}
\end{romenumerate}
\end{definition}

Note that $\bp_t=\bp_{Y_t,Z_t,Y^0_t,Z^0_t}$ is itself of type $(t,\eta)$ for any $\eta\ge 0$. 
The following result relates the point probabilities
from~$\bp_t$ with those from branching processes $\bp$ of type $(t,\eta)$. 
A key feature is the form of the uniform $O(\eta|t-\tc|+\eta^2)$
error term in~\eqref{txbds}.
In~\eqref{eq:asyfull} and~\eqref{txbds} below, 
we have $\xi_{Y_t,Z_t} = \psi(t) = \Theta((t-\tc)^2)$ and $\theta_{Y_t,Z_t,Y^0_t,Z^0_t} = \theta(t) = \Theta(1)$ 
for $\bp=\bp_t$ (using $\eta=0$), 
and $\xi_{Y,Z} \sim \psi(t)$ and $\theta_{Y,Z,Y^0,Z^0} \sim \theta(t)$ for any branching process~$\bp$ of type~$(t,\eta)$ with $\eta \ll |t-\tc| \le \eps_0$. 
In the near-critical case~$t = \tc \pm \eps$, the size--$N$ point probabilities of~$\bp_t$ and~$\bp$ thus both decay exponentially in~$\Theta(\eps^2N)$.

\begin{theorem}[Point probabilities of $\bp$ of type $(t,\eta)$]\label{Tfinal} 
Let $(\bp_t)_{t\in (t_0,t_1)}$ be a $\tc$-critical branching process family. 
Then there exist constants~$\eps_0,\eta_0>0$ and analytic functions~$\theta$,~$\psi$
on the interval $I=[\tc-\eps_0,\tc+\eps_0]$ 
such that 
\begin{equation}\label{eq:asyfull}
 \P(|\bp|=N) = (1+O(1/N)) N^{-3/2} \theta_{Y,Z,Y^0,Z^0}\  e^{-\xi_{Y,Z} N}
\end{equation}
uniformly over all $N\ge 1$, $t\in I$, $0\le\eta\le \eta_0$ and
all branching processes $\bp=\bp_{Y,Z,Y^0,Z^0}$ of type $(t,\eta)$ 
(with respect to $(\bp_t)$),
where the parameters $\xi_{Y,Z}$ and $\theta_{Y,Z,Y^0,Z^0}$, which depend on the distributions of~$(Y,Z)$
and of~$(Y^0,Z^0)$, satisfy
\begin{equation}\label{txbds}
 \xi_{Y,Z} =  \psi(t) +O(\eta|t-\tc|+\eta^2) \qquad\text{and}\qquad \theta_{Y,Z,Y^0,Z^0}=\theta(t)+O(\eta).
\end{equation}
Moreover, $\theta(t)>0$, $\psi(t)\ge 0$, $\psi(\tc)=\psi'(\tc)=0$, and $\psi''(\tc)>0$.
\end{theorem}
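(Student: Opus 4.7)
\medskip

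The plan is to reduce Theorem~\ref{Tfinal} to \refT{T1} and \refL{lfamily}, using linear interpolation of generating functions to compare an arbitrary branching process of type $(t,\eta)$ to the reference process~$\bp_t$.

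First, I would verify that, after shrinking $\eps_0$ and $\eta_0$ if necessary, there exist parameters $R' \in (1,R)$, $M < \infty$, $k_1, k_2 \in \bbN$ and $\delta_0 > 0$ such that $(Y^0_t,Z^0_t) \in \kko(R',M,\delta_0)$ and $(Y_t,Z_t) \in \kk(R',M,k_1,k_2,\delta_0)$ for all $t \in I := [\tc-\eps_0, \tc+\eps_0]$, and similarly with marginally weaker constants for every $\bp = \bp_{Y,Z,Y^0,Z^0}$ of type $(t,\eta)$ with $\eta \le \eta_0$. The moment bound \eqref{RYZ} comes from $\E (R')^{Y+Z} = \tg(R',R')$ together with analyticity on $|y|, |z| < R$; the nondegeneracy conditions \eqref{ey0} and \eqref{a2} follow from \eqref{means:Y} and~\eqref{cnondeg} by continuity in $t$ (with $k_1 = k_2 := k_0$), and persist under perturbation by $\eta$ since Cauchy's formula on a polydisc of radius slightly smaller than $R$ transforms~\eqref{dtype} into $O(\eta)$-bounds on individual probabilities and on $\E Y - \E Y_t$. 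The condition $|\E Y - 1| \le \ccT$ also holds after shrinking $\eps_0, \eta_0$.

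Next, define $\psi(t) := \xi_{Y_t,Z_t}$ and $\theta(t) := \theta_{Y_t,Z_t,Y^0_t,Z^0_t}$. Applying~\refL{lfamily} to the reference family on $I$ yields analyticity of $\psi$ and~$\theta$. By \refT{T1}, $\psi(t) = \Theta(|\E Y_t - 1|^2) \ge 0$, and by~\eqref{means:Y} we can write $\E Y_t - 1 = c_1(t - \tc) + O((t-\tc)^2)$ with $c_1 > 0$, so $\psi(t) \asymp (t-\tc)^2$ near $\tc$. Combined with analyticity and $\psi \ge 0$, this forces $\psi(\tc) = \psi'(\tc) = 0$ and $\psi''(\tc) > 0$. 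The bound $\theta = \Theta(1)$ from~\refT{T1} is uniform over $\kko$ and $\kk$, so $\theta(t) > 0$ on all of $I$.

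For the comparison step, given $\bp$ of type $(t, \eta)$, form the interpolation family $(\bp_u)_{u \in [0,1]}$ with generating functions $g_u := (1-u)g_t + u\tg$ and $g^0_u := (1-u)g^0_t + u\tg^0$; these are convex combinations of valid pgfs, analytic in $(u,y,z)$ on $[0,1] \times \{|y|,|z|<R\}$, and by~\eqref{dtype} the derivative bound~\eqref{sjw} holds with $\lambda = \eta$. Each $(Y_u,Z_u)$ and $(Y^0_u,Z^0_u)$ remains in $\kk$ and $\kko$ respectively since the required moment and probability lower bounds are preserved under mixtures. Crucially, because Cauchy's formula applied to \eqref{dtype} gives $|\E Y - \E Y_t| = O(\eta)$, we have $|\E Y_u - 1| \le |\E Y_t - 1| + u|\E Y - \E Y_t| = O(|t-\tc| + \eta)$ uniformly in $u$. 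Now~\refL{lfamily} gives
\begin{equation*}
 \xi_{Y,Z} - \psi(t) = \int_0^1 \ddu \xi_u \, du = O\bigl(\eta(|t-\tc| + \eta)\bigr) = O\bigl(\eta|t-\tc| + \eta^2\bigr),
\end{equation*}
and similarly $\theta_{Y,Z,Y^0,Z^0} - \theta(t) = \int_0^1 \ddu\theta_u \, du = O(\eta)$, establishing~\eqref{txbds}. Applying \refT{T1} to $\bp$ directly gives $\P(|\bp|=N) = N^{-3/2}e^{-\xi_{Y,Z}N}(\theta_{Y,Z,Y^0,Z^0} + O(1/N))$, and since $\theta_{Y,Z,Y^0,Z^0} = \Theta(1)$ uniformly, this rewrites as~\eqref{eq:asyfull}.

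The main obstacle is the sharp form of~\eqref{txbds}: a naive application of~\refL{lfamily}'s generic derivative bound would only yield $\xi_{Y,Z} - \psi(t) = O(\eta)$, which is useless for the Achlioptas process application since $\psi(t) \asymp (t-\tc)^2$ can be much smaller. Obtaining the refined $O(\eta|t-\tc| + \eta^2)$ bound requires exploiting precisely the stronger estimate $\ddu\xi_u = O(\lambda |\E Y_u - 1|)$ from~\eqref{64}, together with the uniform control $|\E Y_u - 1| = O(|t-\tc|+\eta)$ along the interpolation path. Everything else is essentially bookkeeping and the invocation of the already-developed machinery.
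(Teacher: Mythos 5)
Your proposal is correct and follows essentially the same route as the paper's proof: establish uniform membership of $(Y_t,Z_t)$ and $(Y,Z)$ in $\kk$ (and of $(Y_t^0,Z_t^0)$, $(Y^0,Z^0)$ in $\kko$) via Cauchy estimates and continuity, define $\psi,\theta$ from the reference family and deduce their properties from \refT{T1} and \refL{lfamily}, and then obtain \eqref{txbds} by interpolating with the mixture family $\bg_u=(1-u)g_t+u\tg$ and integrating \eqref{64}--\eqref{63} with $\gl=\eta$ and $|\E\bY_u-1|=O(|t-\tc|+\eta)$. You also correctly identify the key point, namely that the refined error $O(\eta|t-\tc|+\eta^2)$ requires the sharper bound \eqref{64} rather than a generic $O(\gl)$ derivative estimate.
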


\begin{proof}
Fix a $\tc$-critical branching process family $(\bp_t)_{t\in (t_0,t_1)}$,
and let $\delta>0$ and $R>1$ be as in the definitions above.
We pick $0<\eps_0<\delta$, and decrease $R$ slightly, keeping $R>1$.
Then $g(t,y,z)=g_t(y,z)$ and $g^0(t,y,z)=g^0_t(y,z)$ 
are continuous on the compact domain $|t-\tc|\le \eps_0$,
$|y|, |z|\le R$ and so bounded, say by~$M_1$. Let~$M=M_1+1$. Then, provided $|t-\tc|\le \eps_0$, 
by~\eqref{dtype} any $\bp$ of type~$(t,\eta)$ with $\eta\le 1$ satisfies 
\begin{equation*}
 \max\Bigl\{ \bigl|\tg(y,z)\bigr|, \; \bigl|\tg^0(y,z)\bigr|\Bigr\}\le M \qquad\text{whenever}\quad |y|,|z|\le R.
\end{equation*}

For any integers $k,\ell$, we have 
\[
 \P(Y_t=k,\,Z_t=\ell)= \left. \frac{1}{k!\ell!} \cdot \frac{\partial^{k+\ell}}{\partial y^k \partial z^\ell}g_t(y,z)\right|_{y=z=0}.
\]
Since $g_t(y,z)$ is analytic in $t,y,z$, this probability varies continuously in $t$.
Moreover, since $\P(Y=k,\,Z=\ell)$ can analogously be written as a derivative of $\tg$ evaluated at $(0,0)$, 
using standard Cauchy estimates and~\eqref{dtype} we infer 
\begin{equation*}
   \bigl| \P(Y=k,\,Z=\ell) - \P(Y_t=k,\,Z_t=\ell) \bigr| = O(\eta).
\end{equation*}
A similar argument shows that $\E Y_t= \ddy g_t(y,z)\big|_{y=z=1}$ is continuous in $t$, 
and that Cauchy's estimates imply
\begin{equation}\label{be}
 \bigl| \E Y_t-\E Y \bigr| = O(\eta).
\end{equation}
Analogous reasoning applies to $\E Y^0_t$ and $\E Y^0$.

By definition of a $\tc$-critical branching process family, there is some $\delta>0$
such that for $t=\tc$ all of $\E Y_t^0$, $\P(Y_t=k_0,Z_t=k_0)$, $\P(Y_t=k_0+1,Z_t=k_0)$
and $\P(Y_t=k_0,Z_t=k_0+1)$ are at least $2\delta$, say. Furthermore,
at $t=\tc$ we have $\E Y_t=1$. From the argument above these quantities all vary
continuously in $t$, and change by $O(\eta)$ when we move from $\bp_t$ to some $\bp$
of type $(t,\eta)$. It follows that there is a constant $\eta_0>0$ such that,
after reducing $\eps_0$ if necessary, whenever $|t-\tc|\le \eps_0$ and $\eta\le \eta_0$,
then any $\bp$ of type $(t,\eta)$ satisfies the conditions of \refT{T1},
namely that $(Y^0,Z^0)\in \kko$, $(Y,Z)\in \kk$ and $|\E Y-1|\le \ccT$.

Now, applying \refT{T1} to each branching process in the family $(\bp_t)_{t\in [\tc-\eps_0,\tc+\eps_0]}$,
and \refL{lfamily} to the family itself,
establishes the $\eta=0$ case of \refT{Tfinal} with $\theta(t)=\theta_t$ and $\psi(t)=\xi_t$.
Indeed, \refT{T1} gives that $\theta=\Theta(1)$, so we do have $\theta(t)>0$,
while \eqref{xix} gives $\psi(t)=\xi_t=\Theta(|\E Y_t-1|^2)$,
which is $\Theta(|t-t_c|^2)$ since~\eqref{means:Y} implies,
after reducing $\eps_0$ if necessary, that
\begin{equation}\label{be1}
 \bigl| \E Y_t-1 \bigr| = \Theta(|t-t_c|).
\end{equation}
It follows that $\psi(\tc)=\psi'(\tc)=0$ and $\psi''(\tc)>0$.

To complete the proof, assume now that
$\bp=\bp_{Y,Z,Y^0,Z^0}$ is of type $(t,\eta)$, with $0\le \eta\le\eta_0$ and $|t-\tc|\le\eps_0$.
As noted above, \refT{T1} applies to $\bp$, giving~\eqref{eq:asyfull}; it remains to establish~\eqref{txbds}.  
We do this by interpolating between~$\bp$ and~$\bp_t$, and applying Lemma~\ref{lfamily}. 
Consider the branching process family
$(\bY_u,\bZ_u,\bY^0_u,\bZ^0_u)_{u\in [0,1]}$ defined by the mixtures 
\begin{equation}\label{mixed}
\bg_u(y,z):=(1-u)g_t(y,z)+u \tg(y,z) \qquad \text{and} \qquad \bg^0_u(y,z):=(1-u)g^0_t(y,z)+u \tg^0(y,z).
\end{equation}
(As noted earlier, the probability generating functions $\bg_u$, $\bg^0_u$ and the interval $I=[0,1]$ fully 
specify the family.) 
Since the assumptions of \refT{T1} are preserved by taking mixtures, every branching
process in this family satisfies these assumptions. (In fact, they are all clearly of type $(t,\eta)$ too.)
Moreover,
the assumption~\eqref{dtype} implies that \eqref{sjw}
holds with $\gl=\eta$, and 
since $\E \bY_u = \ddy \bg_u(y,z)\big|_{y=z=1}$ we have
\begin{equation}
\label{mixe}  
|\E \bY_u-1| \le |\E Y_t -1| + u|\E Y-\E Y_t|=O(|t-t_c|+\eta)
\end{equation}
by~\eqref{be} and~\eqref{be1}.
Thus we may apply Lemma~\ref{lfamily}, and, by integrating~\eqref{64} with $\xi_u=\xi_{\bY_u,\bZ_u}$, we infer 
\begin{equation}\label{mixint}  
 \xi-\xi_t = \xi_{\bY_1,\bZ_1} - \xi_{\bY_0,\bZ_0} = O\bigpar{\eta|t-t_c|+\eta^2}.
\end{equation}
Finally, $\theta-\theta_t=O(\eta)$ follows similarly by integrating~\eqref{63}.
\end{proof}

\refT{Tfinal} immediately implies the key case $p_{\cR}=1$, $K=0$ of Theorem A.10 of~\cite{RW} 
with any positive value of the constant~$c$. 
Indeed, after reducing $\eps_0$ if necessary, 
the assumption $\eta\le c|t-\tc|$ in the latter theorem implies the assumption $\eta\le\eta_0$ of \refT{Tfinal}.
Moreover, the same assumption $\eta\le c|t-\tc|$ together with~\eqref{txbds} implies the bound $\xi_{Y,Z}=\psi(t)+O(\eta|t-\tc|)$
in Theorem A.10 of~\cite{RW}.

\section{The survival probability}\label{Ssurvival}

In this section we study the survival probability of the branching process $\bp=\bp_\YYZZ$ from \refD{def:bp}
and the branching process family $(\bp_{u})_{u\in I}$ from \refS{Svary}.  
The goal is to prove \refT{thsurv} below, i.e., to give estimates for $\P(|\bp|=\infty)$ 
suitable for the application to Achlioptas processes in~\cite{RW}.

Our strategy mimics the general approach used in Sections~\ref{Smain}--\ref{Svary} for point probabilities, though the technical details are much simpler. 
In \refS{Ssurvsf} we first prove a technical result for the survival probability $\P(|\bp|=\infty)$ of a single branching process (\refL{LSA}). 
Then we show that in a branching process family $(\bp_{u})_{u\in I}$ certain parameters related to the survival probability vary smoothly in~$u$ (\refL{LSB}). 
Finally, in \refS{Ssurvap} we combine these two auxiliary results to prove \refT{thsurv}.

\subsection{Properties of a single process and general parameterized families}\label{Ssurvsf}

As far as the survival of $\bp_\YYZZ$ is concerned, particles of type~$S$ are
irrelevant and may be ignored, so we may consider a standard single-type
Galton--Watson branching process with offspring distribution~$Y$ and initial
distribution~$Y^0$, which we henceforth denote by $\bp_{\YY}$. 
Thus 
\begin{equation}\label{rhoYY:def}
\P(|\bp_\YYZZ|=\infty)=\P(|\bp_\YY|=\infty) =: \rho_\YY .
\end{equation}
Writing $\mathbbm{1}$ as shorthand for the distribution with constant value one, it similarly follows that 
\begin{equation}\label{rhoY:def}
\P(|\bp^1_{\YZ}|=\infty)=\P(|\bp_{Y,\mathbbm{1}}|=\infty) =: \rho_Y .
\end{equation}
Throughout this section, 
we shall work with the univariate \pgf{s} $g_Y(y):=\E y^Y=g_\YZ(y,1)$ and
$g_{Y^0}(y):=g_{Y^0,Z^0}(y,1)$.
By standard branching process arguments (see, e.g., \cite[Theorem~5.4.5]{GrimStir}), we have
\begin{equation}\label{rhoYY}
  1-\rho_{\YY}=g_{Y^0}(1-\rhoY),
\end{equation}
where the extinction probability $1-\rhoY$ is the smallest non-negative
solution to 
\begin{equation} \label{rhoY}
  1-\rhoY = g_Y(1-\rhoY) .
\end{equation}
Fix $R>1$, $\CM$, $k_1$, $k_2$ and $\delta>0$.
We henceforth assume that $(Y,Z)\in\kk = \kk(R,\CM,k_1,k_2,\delta)$ and
$(Y^0,Z^0) \in \kko = \kko(R,\CM,\delta)$. 
Since $(Y,Z)\in\kk$, by \eqref{RYZ} the function $g_Y(y)$ is analytic in $\set{y\in\bbC:|y|<R}$, with~$g_Y(1)=1$. 
A Taylor expansion of $g_Y(y)$ at $y=1$ yields, for $|x|<R-1$,
\begin{equation}\label{gY}
  g_Y(1-x)=\E(1-x)^Y = \sumno(-1)^n\E \binom{Y}{n} x^n = 1-\E Y x + \E \binom Y2 x^2 - \E \binom Y3 x^3+\cdots . 
\end{equation}
Define 
\begin{equation}\label{hYdef}
  h_Y(x) := \frac{1-g_Y(1-x)}{x}, 
\end{equation}
removing the removable singularity at $x=0$. Then $h_Y$ is analytic in $\set{x\in\bbC:|x|<R-1}$, and
\begin{equation}\label{hY}
 h_Y(x) = \sumno(-1)^n\E \binom{Y}{n+1} x^n
= \E Y - \E \binom Y2 x + \E \binom Y3 x^2+\cdots.
\end{equation}
Observe that if $\rhoY>0$, then~\eqref{rhoY} is equivalent to $h_Y(\rhoY)=1$. 
Furthermore,
\begin{equation}\label{hh'}
  h_Y(0)=\E Y=g'_Y(1),
\qquad
-h_Y'(0)=\E\binom Y2 = \frac{ \E (Y(Y-1))}2.
\end{equation}
We next derive bounds on the derivatives of~$h_Y$ valid for small~$x$.

\begin{lemma}\label{LS}
Suppose that $R>1$, $\CM<\infty$, $k_1,k_2 \in \bbN$, and $\delta>0$. 
There exist constants $0<\ccname\ccLS\le\min\{R-1,1\}/3$
and $\CCname\CCLS\xm$ such that 
if $(Y,Z)\in\kk=\kk(R,\CM,k_1,k_2,\delta)$, then the following hold. 
\begin{romenumerate}
\item \label{LSD}
If $m \in \bbN$ and $|x|\le\ccLS$, then $|D^m h_Y(x)|\le \CCLS\xm$.
\item \label{LS'}
If $\E Y\ge 1-\gd$, then
$h_Y'(0)\le-\gd$ and $\P(Y \ge 2)>0$. 
\item \label{LS'x}
If $\E Y\ge 1-\gd$ and $|x|\le\ccLS$, then
$h_Y'(x)\le-\gd/2$.
\end{romenumerate}
\end{lemma}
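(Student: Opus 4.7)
The plan is to establish (i) by standard complex-analytic estimates, then handle (ii) by a short combinatorial calculation, and finally deduce (iii) from (i) and (ii) by a first-order Taylor estimate after possibly shrinking $c_{LS}$.

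For (i), the key fact is that $(Y,Z)\in\cK$ implies $|g_Y(y)|\le \CM$ whenever $|y|\le R$, by~\eqref{RYZ}. Rewriting
\begin{equation*}
h_Y(x) \;=\; \int_0^1 g_Y'(1-tx)\dd t,
\end{equation*}
for $|x|\le (R-1)/2$ the argument $1-tx$ stays in a compact subset of $\{|y|<R\}$, so Cauchy's estimate applied to $g_Y$ yields a uniform bound $|h_Y(x)|\le K$ depending only on $R$ and $\CM$. A second application of Cauchy's estimate to the bounded analytic function $h_Y$ then produces, for some fixed $c_{LS}>0$ small enough to satisfy $c_{LS}\le\min\{R-1,1\}/3$, uniform bounds $|D^m h_Y(x)|\le C_{LS}^{(m)}$ on $|x|\le c_{LS}$, depending only on $R$, $\CM$ and $m$.

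For (ii), \eqref{hh'} reduces matters to showing $\E Y(Y-1)\ge 2\delta$. The argument splits according to $k_1$. When $k_1\ge 1$, the nondegeneracy condition \eqref{a2} already gives $\P(Y=k_1+1)\ge\delta$ with $k_1+1\ge 2$, so
\begin{equation*}
\E Y(Y-1) \;\ge\; (k_1+1)k_1\delta \;\ge\; 2\delta,
\end{equation*}
without even using the hypothesis $\E Y\ge 1-\delta$. When $k_1=0$, both $\pi_{0,k_2}$ and $\pi_{0,k_2+1}$ are at least $\delta$, giving $\P(Y=0)\ge 2\delta$; combining the elementary inequality $k(k-1)\ge 2(k-1)$ for $k\ge 2$ with a term-by-term rearrangement yields
\begin{equation*}
\E Y(Y-1) \;\ge\; 2\bigpar{\E Y - 1 + \P(Y=0)},
\end{equation*}
which together with $\E Y\ge 1-\delta$ gives $\E Y(Y-1)\ge 2\delta$ as needed. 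In either case $\E Y(Y-1)>0$ immediately forces $\P(Y\ge 2)>0$.

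For (iii), part (i) with $m=2$ bounds $|h_Y''|$ by $C_{LS}^{(2)}$ on $|x|\le c_{LS}$, so the fundamental theorem of calculus applied on the segment from $0$ to $x$ gives $|h_Y'(x)-h_Y'(0)|\le C_{LS}^{(2)}|x|$. Replacing $c_{LS}$ by $\min\{c_{LS},\delta/(2C_{LS}^{(2)})\}$ (which only shrinks it and leaves the $C_{LS}^{(m)}$ valid) then yields $h_Y'(x)\le -\delta + \delta/2 = -\delta/2$, using (ii).

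The most delicate step is the $k_1=0$ branch of (ii): here one genuinely needs to combine \emph{both} nondegeneracy conditions from \eqref{a2} with the hypothesis $\E Y\ge 1-\delta$, via the identity $\E Y(Y-1)\ge 2(\E Y - 1 + \P(Y=0))$. Everything else is routine.
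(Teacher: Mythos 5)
Your proof is correct and follows essentially the same route as the paper's: Cauchy's estimates for (i), a case split on $k_1$ to bound $\E Y(Y-1)\ge 2\delta$ for (ii) (your $k_1=0$ rearrangement is precisely the paper's $\E\bigl(\indic{Y\ge 2}(Y-1)\bigr)=\E Y-1+\P(Y=0)\ge\delta$), and a Taylor/FTC estimate with a shrunk $\ccLS$ for (iii). The only cosmetic difference is in (i), where you bound $h_Y$ via the integral representation $h_Y(x)=\int_0^1 g_Y'(1-tx)\dd t$ rather than directly from the definition $h_Y(x)=(1-g_Y(1-x))/x$ on the circle $|x|=(R-1)/2$; both are standard and equally valid.
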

\begin{proof}
\pfitemref{LSD}
By~\eqref{hYdef}
and~\eqref{RYZ}, $h(x)=O(1)$ if $|x|=(R-1)/2$, say. 
Hence the result, with $\ccLS:=\min\{R-1,1\}/3$, say, follows by Cauchy's estimates.

\pfitemref{LS'}
If \eqref{a2} holds with $k_1\ge1$, then $\P(Y\ge2)\ge\P(Y=k_1+1) \ge \pi_{k_1+1,k_2}\ge\gd$,
and thus $\E(Y(Y-1))\ge 2\gd$.

If instead \eqref{a2} holds with $k_1=0$, then $\P(Y=0)\ge \pi_{k_1,k_2} + \pi_{k_1,k_2+1}\ge2\gd$.
Since $\E Y\ge 1-\gd$, then $Y \in \bbN$ implies $\E (\indic{Y \ge 2}(Y-1)) = \E(Y-1) +\P(Y=0)\ge\gd$,
and thus $\E(Y(Y-1))\ge 2\E(\indic{Y \ge 2}(Y-1))\ge2\gd$. 

In both cases, $h_Y'(0)\le-\gd$ follows by \eqref{hh'}, and $\P(Y \ge 2)>0$ holds, too.  

\pfitemref{LS'x} 
Follows by \ref{LS'} and \ref{LSD} (with $m=2$), replacing
$\ccLS$ by $\min\set{\ccLS,\gd/2\CCLS^{(2)}}$. 
\end{proof}

We next characterize the survival probability~$\rhoY$ in terms of the (unique) solution to $h_Y(\hrho)=1$. 

\begin{lemma}\label{LSA}
Suppose that $R>1$, $\CM<\infty$, $k_1,k_2 \in \bbN$, and $\delta>0$. 
There exists a constant $0 < \ccname\ccLSA \le \delta$ 
such that the following holds.  
If $(Y,Z)\in\kk(R,\CM,k_1,k_2,\delta)$
and $|\E Y-1|<\ccLSA$, then
there is a unique $\hrho=\hrho_Y \in \{x \in \bbR: |x|<\ccLS\}$ such that
\begin{equation*}
h_Y(\hrho)=1.
\end{equation*}
Furthermore, 
$\rhoY=\max\{\hrho,0\}$, 
$\sign(\hrho)=\sign(\E Y-1)$, 
and $|\hrho|=\Theta(|\E Y-1|)$,
where the implicit constants depend only on $R,\CM,k_1,k_2$ and $\delta$. 
\end{lemma}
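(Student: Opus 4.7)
The plan is to piece together the estimates from \refL{LS} with the standard characterization of the extinction probability of a Galton--Watson process. I would first choose $\ccLSA := \min\set{\gd,\: \gd\ccLS/3}$ (say), so that $|\E Y - 1| < \ccLSA$ simultaneously ensures $\E Y \ge 1-\gd$ (needed to invoke \refL{LS}\ref{LS'x}) and that $h_Y$ straddles the value~$1$ on $[-\ccLS, \ccLS]$. For existence, uniqueness, and sign of $\hrho$, the argument is then essentially one-variable calculus: since $h_Y(0) = \E Y$ by \eqref{hh'} and $h_Y'(x) \le -\gd/2$ on $[-\ccLS, \ccLS]$ by \refL{LS}\ref{LS'x}, the mean value theorem gives $h_Y(-\ccLS) \ge \E Y + \gd\ccLS/2 > 1$ and $h_Y(\ccLS) \le \E Y - \gd\ccLS/2 < 1$. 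Strict monotonicity and continuity then yield a unique $\hrho \in (-\ccLS, \ccLS)$ with $h_Y(\hrho) = 1$, and monotonicity immediately forces $\sign(\hrho) = \sign(\E Y - 1)$.

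For the magnitude, another application of the mean value theorem gives $\E Y - 1 = -h_Y'(\xi)\hrho$ for some $\xi$ between~$0$ and~$\hrho$, so combining $|h_Y'(\xi)| \ge \gd/2$ from \refL{LS}\ref{LS'x} with the Cauchy-type bound $|h_Y'(\xi)| = O(1)$ from \refL{LS}\ref{LSD} yields $|\hrho| = \Theta(|\E Y - 1|)$, with implicit constants depending only on $R$, $\CM$, $k_1$, $k_2$, $\gd$ as required.

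It remains to identify $\rhoY$ with $\max\set{\hrho, 0}$, which I would split into two cases using the standard characterization of the extinction probability $1-\rhoY$ as the smallest fixed point of $g_Y$ in $[0,1]$. In the subcritical or critical case $\E Y \le 1$, the fact that such a Galton--Watson process dies out almost surely (using $\P(Y \ge 2) > 0$ from \refL{LS}\ref{LS'} to exclude the trivial case $Y \equiv 1$) gives $\rhoY = 0$; combined with $\hrho \le 0$ from the sign analysis, this yields $\max\set{\hrho, 0} = 0 = \rhoY$. In the supercritical case $\E Y > 1$, we have $\hrho > 0$, and $h_Y(\hrho) = 1$ is equivalent to $s := 1 - \hrho \in [0,1)$ being a fixed point of $g_Y$. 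The one point that needs a moment's thought is verifying that $s$ is the \emph{smallest} non-negative fixed point: this follows from convexity of $g_Y$ on $[0,1]$ (as a \pgf{} with non-negative coefficients), together with $g_Y(1)=1$ and $g_Y'(1) = \E Y > 1$, which force the convex function $g_Y(t)-t$ to have exactly one zero in $[0,1)$. Thus $\rhoY = \hrho$, completing the proof; I expect no real obstacles beyond careful bookkeeping with constants, the supercritical convexity step being the only place requiring any thought.
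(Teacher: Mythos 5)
Your proposal is correct and the final identification $\rhoY=\max\{\hrho,0\}$ is close in spirit to the paper's, but your treatment of $\hrho$ itself is genuinely more elementary. The paper establishes the existence, uniqueness and magnitude of $\hrho$ by invoking its quantitative inverse function theorem (Lemma~\ref{LI}, with $d=1$ and $F(x)=h_Y(x)-\E Y$): $\hrho:=G(1-\E Y)$, with the two-sided estimate $|\hrho|=\Theta(|\E Y-1|)$ coming from the uniform bounds on $\|DG\|$ and $\|DF\|$. You replace this by bare one-variable calculus: since $h_Y'\le -\gd/2$ on $[-\ccLS,\ccLS]$ and $h_Y(0)=\E Y$, the mean value theorem shows $h_Y$ straddles $1$ at the endpoints, giving existence and uniqueness by the intermediate value theorem and monotonicity, with sign for free, and the MVT again (together with $|h_Y'|=O(1)$ from Lemma~\ref{LS}\ref{LSD}) gives $|\hrho|=\Theta(|\E Y-1|)$. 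This buys simplicity and avoids re-deploying the heavier inverse-function machinery; the paper's route is more uniform with its treatment elsewhere (it already has Lemma~\ref{LI} set up for the two-dimensional case). For the identification with $\rhoY$, the paper does the whole case analysis through fixed-point counting for the strictly convex $g_Y$, whereas you instead dispose of $\E Y\le 1$ by citing a.s.\ extinction (excluding $Y\equiv1$ via $\P(Y\ge2)>0$) and only use the convexity argument for $\E Y>1$; this also works.

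One small point worth tightening: in the supercritical step you assert that convexity of $g_Y$, plus $g_Y(1)=1$ and $g_Y'(1)>1$, forces $g_Y(t)-t$ to have exactly one zero in $[0,1)$. You really want \emph{strict} convexity here (a merely convex $g_Y$ could in principle coincide with the identity on a nontrivial set), and strict convexity on $[0,1]$ is exactly what $\P(Y\ge 2)>0$ from Lemma~\ref{LS}\ref{LS'} gives you. You already cite that fact for the subcritical case; you should invoke it here as well, as the paper does. With that one-word fix the argument is complete.
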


\begin{proof}
We apply the inverse function theorem, \refL{LI}, with $d=1$, $r=\ccLS$ and 
\[
F(x):=h_Y(x)-\E Y,
\]
using \eqref{hh'} and \refL{LS} to verify the assumptions; we shall ensure that $\ccLSA\le \delta$,
so $|\E Y-1|<\ccLSA$ implies $\E Y \ge 1-\delta$. 
Writing $B_r=B^1_r=\{x\in\bbR:|x|<r\}$ to avoid clutter (as before),
\refL{LI} shows the existence of a constant $\ccLSA>0$, which we may assume
to be at most $\gd$, and an inverse function
$G:B_{\ccLSA}\to B_{\ccLS}$ 
with $F(G(x))=x$ and $G(0)=0$. We define
\[
 \hrho:=G(1-\E Y) ,
\] 
so that $h_Y(\hrho)=F(\hrho)+\E Y=1$. 
Since $\norm{DG(y)}=O(1)$ in~$B_{\ccLSA}$ by \refL{LI}
and $\norm{DF(x)}=O(1)$ in~$B_{\ccLS}$ 
by \refL{LS}\ref{LSD}, using $G(0)=F(0)=0$ we have
$|\hrho|=|G(1-\E Y)|=O(|\E Y-1|)$ and $|\E Y-1|=|F(\hrho)| = O(\hrho)$, 
establishing $|\hrho|=\Theta(|\E Y-1|)$. 

We relate $\hrho$ and $\rhoY$ by a variant of the usual fixed point 
analysis of $g_Y(x)=x$ in~$[0,R]$. 
Since $\P(Y\ge2)>0$ by \refL{LS}\ref{LS'}, $g_Y$ 
is strictly convex on~$[0,R]$, which implies that $g_Y(x)=x$ has at most
two solutions in this interval, and exactly one solution if $\E Y=1$, 
since $g_Y(1)=1$ and $g'_Y(1)=\E Y$.
Now $x=1$ and $x=1-\rhoY \in [0,1]$ are solutions. Since $h_Y(\hrho)=1$, $x=1-\hrho$ is also a solution
(see~\eqref{hYdef}); since $|\hrho| < \ccLS<\min\{R-1,1\}$, we have $1-\hrho\in (0,R)$.

If $\hrho>0$, then $1-\hrho\in(0,1)$ and $1$ are two distinct solutions;
thus $1-\rhoY=1-\hrho$, and $g'_Y(1) > 1$ by strict convexity.  
Similarly, if $\hrho<0$, then $1-\hrho \in (1,R)$ and thus $1-\rhoY=1$, and $g'_Y(1) < 1$ by strict convexity. 
Finally, if $\hrho=0$, then $\E Y = h_Y(0)=h_Y(\hrho)=1$ by~\eqref{hh'},  
so that $1-\rhoY=1$ (since then $x=1$ is the only solution to $g_Y(x)=x$ in~$[0,R]$). 
Hence $\rhoY=\max\{\hrho,0\}$ in all cases. 
It follows also that $\hrho$ is unique, and that~$\hrho$ has the same sign as $\E Y-1=g'_Y(1)-1$. 
\end{proof}

\begin{remark}
Since $F'(0)=h_Y'(0)=-\E(Y(Y-1))/2$, when $\E Y>1$ it follows easily that
$\rhoY=\frac{2(\E Y-1)}{\E(Y(Y-1))}+ O(|\E Y-1|^2)$. In particular
$\rhoY\sim\frac{2(\E Y-1)}{\E(Y(Y-1))}$ as $\E Y\downto 1$, assuming, as always here,
that $(Y,Z)\in\kk$. 
This holds under much weaker conditions on $Y$, see \cite{Hoppe92} and
\cite{Athreya92} for precise conditions; see also \cite[Section~3]{SJ313}.
\end{remark}

We next consider a branching process family $(\bp_u)_{u\in I}= (\bp_{Y_u,Z_u,Y^0_u,Z^0_u})_{u\in I}$ as in \refS{Svary};
as there we indicate the parameter~$u$ by subscripts. 
Thus, for example,
$\hrho_u=\hrho_{Y_u}$ is defined as in~\refL{LSA}, with
$(Y,Z)$ replaced by $(Y_u,Z_u)$.  
Furthermore, in analogy to~\eqref{rhoYY}, we also define
\begin{equation}\label{hrhoYY}
  1-\hrho\YYu := g_{Y^0_u}(1-\hrho_u).
\end{equation}
Thus, by combining~\eqref{rhoYY} with \refL{LSA}, when $\E Y_u\ge1$ we have $\hrho_u=\rho\Yu$ and $\hrho\YYu=\rho\YYu$. 
Mimicking \refL{lfamily}, the following auxiliary result shows that  
$\hrho_u$ and $\hrho\YYu$ both vary smoothly in~$u$. 

\begin{lemma}\label{LSB}
Suppose that $R>1$, $\CM<\infty$, $k_1,k_2 \in \bbN$, and $\delta>0$. 
Set $\kko = \kko(R,\CM,\delta)$ and $\kk = \kk(R,\CM,k_1,k_2,\delta)$. 
Let $(\bp_u)_{u\in I}= (\bp_{Y_u,Z_u,Y^0_u,Z^0_u})_{u\in I}$ be a branching
process family satisfying the assumptions of \refL{lfamily}, 
with $|\E Y_u-1|\le\ccT$ replaced by $|\E Y_u-1|\le\ccLSA$. 
Let~$\hrho_u$ and~$\hrho\YYu$ be defined as in \refL{LSA} and~\eqref{hrhoYY}. 
Then~$\hrho_u$ and~$\hrho\YYu$ are analytic functions of~$u\in I$. Furthermore, 
\begin{equation}\label{drho}
  \ddu \hrho_u = O(\gl),
\qquad
  \ddu \hrho\YYu = O(\gl),
\end{equation}
where the implicit constants depend only on $R,\CM,k_1,k_2$ and $\delta$. 
\end{lemma}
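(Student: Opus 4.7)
The plan is to apply the implicit function theorem to the defining equation $h_{Y_u}(\hrho_u)=1$, and then pass to $\hrho\YYu$ via a direct chain-rule calculation on \eqref{hrhoYY}.

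First, I would check that $h_{Y_u}(x)$ is jointly (real) analytic in $(u,x)$ on a neighbourhood of $I\times\set{x:|x|\le\ccLS}$, with a controlled $u$-derivative. Note that $h_{Y_u}(x)=\bigpar{1-g_u(1-x,1)}/x$, extended by removing the singularity at $x=0$ as in~\eqref{hY}. Since $g_u(y,z)$ is analytic on $\cD_{I,R}$ and $\pddu g_u(y,z)=O(\gl)$ there by assumption, the function $(u,x)\mapsto 1-g_u(1-x,1)$ is analytic on $I\times\set{x\in\bbC:|x|<R-1}$ with $u$-derivative bounded by $O(\gl)$ on this set. Taking $|x|=(R-1)/2$ say, and dividing by $x$, it follows that $h_{Y_u}(x)$ is analytic in $(u,x)$ on the corresponding domain; and a Cauchy estimate (applied, as in the proof of \refL{LS}\ref{LSD}, to $\pddu h_{Y_u}$ on the circle $|x|=(R-1)/2$) gives $\pddu h_{Y_u}(x)=O(\gl)$ uniformly for $|x|\le\ccLS$.

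Now apply the (one-variable) implicit function theorem to
\begin{equation*}
F(u,x):=h_{Y_u}(x)-1
\end{equation*}
at any point $(u_0,\hrho_{u_0})$ with $u_0\in I$. By \refL{LSA}, $F(u_0,\hrho_{u_0})=0$; by \refL{LS}\ref{LS'x}, $\pddx F(u_0,\hrho_{u_0})=h'_{Y_{u_0}}(\hrho_{u_0})\le -\gd/2$, which is bounded away from zero uniformly in $u_0$; and $F$ is jointly analytic by the previous paragraph. The standard IFT therefore shows that $\hrho_u$ is analytic in $u\in I$, and
\begin{equation*}
\ddu\hrho_u \;=\; -\frac{(\pddu h_{Y_u})(\hrho_u)}{h'_{Y_u}(\hrho_u)} \;=\; O(\gl),
\end{equation*}
since the numerator is $O(\gl)$ and the denominator is $\Theta(1)$. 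Uniqueness of $\hrho_u$ in $\set{|x|<\ccLS}$ (from \refL{LSA}) guarantees that this local analytic branch indeed coincides with the globally defined $\hrho_u$.

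For $\hrho\YYu$, use \eqref{hrhoYY}: $1-\hrho\YYu=g_{Y^0_u}(1-\hrho_u)=g^0_u(1-\hrho_u,1)$. Since $g^0_u(y,z)$ is analytic in $(u,y,z)$ on $\cD_{I,R}$, and $\hrho_u$ is analytic with $|\hrho_u|<\ccLS<R-1$, the right-hand side is an analytic function of $u$; hence so is $\hrho\YYu$. Differentiating and applying the chain rule,
\begin{equation*}
-\ddu\hrho\YYu \;=\; \Bigpar{\pddu g^0_u}(1-\hrho_u,1) \;-\; \pddy g^0_u(1-\hrho_u,1)\cdot\ddu\hrho_u.
\end{equation*}
The first term is $O(\gl)$ directly by \eqref{sjw}. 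For the second, Cauchy's estimate applied to $g^0_u$ gives $\pddy g^0_u(1-\hrho_u,1)=O(1)$ (evaluating at a point strictly inside the disc $|y|<R$), and we have already shown $\ddu\hrho_u=O(\gl)$; so the whole expression is $O(\gl)$, establishing \eqref{drho}. The only mildly delicate step is obtaining the uniform $O(\gl)$ bound on $\pddu h_{Y_u}$ near $x=0$, which is handled cleanly by the Cauchy-estimate trick above rather than by differentiating the series \eqref{hY} termwise.
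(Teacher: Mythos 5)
Your proof is correct and follows essentially the same route as the paper's: both apply the implicit function theorem to $h_{Y_u}(\hrho_u)=1$, using the uniform lower bound on $|h'_{Y_u}(\hrho_u)|$ from Lemma~\ref{LS}\ref{LS'x} and a bound on $\pddu h_{Y_u}$ obtained by evaluating on a circle of radius $\Theta(R-1)$ and passing to the smaller disc $|x|\le\ccLS$ (the paper invokes the maximum modulus principle here, you invoke a Cauchy estimate — the same tool in this instance), and then both pass to $\hrho\YYu$ via the chain rule on \eqref{hrhoYY}. Your extra remark that uniqueness in Lemma~\ref{LSA} glues the local IFT branches into the global $\hrho_u$ is a point the paper leaves implicit, but your argument and the paper's are the same in substance.
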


\begin{proof}
Let $h_u(x)=h_{Y_u}(x):=(1-g_u(1-x))/x$ be the equivalent of~\eqref{hYdef} 
for $\bp_u$, again removing the removable singularity at~$x=0$.  
Then $h_u(x)$ is an analytic function of $(u,x) \in I \times \set{x\in\bbC:|x|<R-1}$. 
Note that~\eqref{sjw} implies $|\pddu h_u(x)|=O(\gl)$ if $|x|=(R-1)/3$, say. 
Since $\ccLS \le (R-1)/3$, by the maximum modulus principle (applied with $u$ fixed)
it follows that 
\begin{equation}\label{pdduhu}
\Bigabs{\pddu h_u(x)} \le C\gl
\end{equation}
for all $u \in I$ and $|x| \le \ccLS$.

By \refL{LSA}, for every $u\in I$ there is a unique $\hrho_u \in \bbR$ with $|\hrho_u|<\ccLS$ such that
\begin{equation}\label{hrhou}
  h_u(\hrho_u)=1.
\end{equation}
Since $|h_u'(\hrho_u)|\ge\gd/2$ by \refL{LS}\ref{LS'x} and $|\E Y_u-1|\le\ccLSA \le \delta$, the
implicit function theorem shows that~$\hrho_u$ is an analytic function of~$u\in I$. 
That $\hrho\YYu$ is analytic then follows from~\eqref{hrhoYY}
and the assumption that~$g_u^0(y,z)$ is analytic.
By differentiating \eqref{hrhou} we obtain 
$\frac{\partial h_u}{\partial u}(\hrho_u)+
h_u'(\hrho_u)\cdot\ddu\hrho_u=0$.  
So, using $|h_u'(\hrho_u)|\ge\gd/2$ and~\eqref{pdduhu},
\begin{equation}\label{eleo}
\ddu\hrho_u=-h_u'(\hrho_u)\qw \cdot  \frac{\partial h_u}{\partial u}(\hrho_u)
=O(1)\cdot O(\gl)
=O(\gl).
\end{equation}

Finally, $g_{Y_u^0}'(1-\hrho_u)=O(1)$ follows from~\eqref{RYZ} and Cauchy's estimates (recall that $|\hrho_u| < \ccLS \le (R-1)/3$).  
By differentiating~\eqref{hrhoYY} and then using~\eqref{sjw} and
\eqref{eleo}, we obtain 
\begin{equation*}
\ddu \hrho\YYu = -\Bigl(\pddu g_{Y_u^0}\Bigr)(1-\hrho_u)+ 
g_{Y_u^0}'(1-\hrho_u) \cdot \ddu\hrho_u = O(\gl) + O(1)\cdot O(\gl) =O(\gl),
\end{equation*}
completing the proof. 
\end{proof}

\subsection{A specific result suitable for application to Achlioptas processes}\label{Ssurvap}

We are now ready to prove our main result, concerning the $t$-dependence of the survival probability of $\bp_t$
when $(\bp_t)_{t\in I}$ is a $\tc$-critical branching process family,
as well as the survival probability of branching processes $\bp=\bp_{Y,Z,Y^0,Z^0}$ of type~$(t,\eta)$; see \refS{SStc-critical} for the relevant definitions. 
Two key features are the convergent power series expansion~\eqref{rexp}, and the uniform $O(\eta)$ error term in~\eqref{radd}.
In particular, we have $\trho\sim \rho(\tc+\eps) = \Theta(\eps)$ for any branching process~$\bp$ of type~$(\tc+\eps,\eta)$ with $\eta \ll \eps \le \eps_0$. 
In the supercritical case~$t = \tc + \eps$, the survival probabilities of~$\bp_t$ and~$\bp$ thus both grow linearly in~$\eps$. 

\begin{theorem}[Survival probabilities]\label{thsurv}
Let $(\bp_t)_{t\in (t_0,t_1)}=(\bp_{Y_t,Z_t,Y^0_t,Z^0_t})_{t\in (t_0,t_1)}$ be a $\tc$-critical branching process family.
Then there exist constants $\eps_0,c>0$ with the following properties.
Firstly, the survival probability $\rho(t):=\P(|\bp_t|=\infty)$ is zero for $\tc-\eps_0\le t\le \tc$, and is positive
for $\tc<t\le \tc+\eps_0$.
Secondly, $\rho(t)$ is analytic on $[\tc,\tc+\eps_0]$. More precisely,
there are constants $a_i$ with $a_1>0$ such that
\begin{equation}\label{rexp}
 \rho(\tc+\eps) = \sum_{i=1}^\infty a_i\eps^i
\end{equation}
for $0\le \eps\le \eps_0$.
Thirdly, for any $t$, $\eta$ with $|t-\tc|\le\eps_0$ and $\eta\le c|t-\tc|$,
and any branching process $\bp=\bp_{Y,Z,Y^0,Z^0}$ of type $(t,\eta)$ (with respect to $(\bp_t)$),
the survival probability $\trho:=\P(|\bp|=\infty)$ is zero if $t\le \tc$, and is positive and satisfies 
\begin{equation}\label{radd}
 \trho = \rho(t)+O(\eta)
\end{equation}
if $t>\tc$, where the implicit constant depends only on the family $(\bp_t)$, not
on $t$ or $\bp$.
Moreover, analogous statements hold for the survival probabilities
$\rho_1(t):=\P(|\bp^1_{Y_t,Z_t}|=\infty)$ and $\trho_1:=\P(|\bp^1_{Y,Z}|=\infty)$. 
\end{theorem}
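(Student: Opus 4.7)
The plan is to reduce the survival probability to the auxiliary quantity $\hrho_{Y_t,Y_t^0}$ from \refL{LSA} and~\eqref{hrhoYY}, apply \refL{LSB} for analyticity in $t$, and then interpolate between $\bp$ and $\bp_t$ (as in the proof of \refT{Tfinal}) to get the comparison estimate~\eqref{radd}.

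For the first two parts, \refL{LSA} applied to each $(Y_t,Z_t)$ yields $\hrho_t:=\hrho_{Y_t}$ with $\sign(\hrho_t)=\sign(\E Y_t-1)$ and $|\hrho_t|=\Theta(|\E Y_t-1|)$. Since $(\bp_t)$ is $\tc$-critical, \eqref{means:Y} gives $\E Y_\tc=1$ and $\ddt\E Y_t|_\tc>0$, so after reducing $\eps_0$ we have $\E Y_t-1=\Theta(t-\tc)$ with the same sign as $t-\tc$. Combined with \eqref{rhoYY}, this shows $\rho(t)=0$ on $[\tc-\eps_0,\tc]$ (since $\rho_{Y_t}=0$ forces $g_{Y_t^0}(1)=1$) and $\rho(t)=\hrho_{Y_t,Y_t^0}>0$ on $(\tc,\tc+\eps_0]$. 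Now \refL{LSB} applied to the family $(\bp_t)$ itself shows that $\hrho_t$ and $\hrho_{Y_t,Y_t^0}$ are analytic functions of $t$ in a neighborhood of $\tc$, so $\rho$ is analytic on $[\tc,\tc+\eps_0]$. The constant term in its Taylor series at $\tc$ vanishes because $\hrho_\tc=0$ forces $\hrho_{Y_\tc,Y_\tc^0}=1-g_{Y_\tc^0}(1)=0$; differentiating~\eqref{hrhoYY} at $\tc$ and using the identity $g_{Y_t^0}(1)\equiv 1$ then yields $a_1=\E Y_\tc^0\cdot\ddt\hrho_t|_\tc$, which is positive by \eqref{means:Y} and the fact that $\hrho_t=\Theta(t-\tc)$ with the same sign as $t-\tc$.

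For the comparison with a branching process $\bp=\bp_{Y,Z,Y^0,Z^0}$ of type $(t,\eta)$, I would adapt the mixture construction from the proof of \refT{Tfinal}: set $\bg_u:=(1-u)g_t+u\tg$ and $\bg_u^0:=(1-u)g_t^0+u\tg^0$ for $u\in[0,1]$. Condition~\eqref{dtype} yields the analog of~\eqref{sjw} with $\gl=\eta$, and convex combinations preserve the hypotheses of \refL{LSB} (after small adjustments to constants, exactly as in \refT{Tfinal}). Integrating~\eqref{drho} from $u=0$ to $u=1$ then gives $\hrho_{Y,Y^0}=\hrho_{Y_t,Y_t^0}+O(\eta)$ and $\hrho_Y=\hrho_t+O(\eta)$. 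Provided $c$ in $\eta\le c|t-\tc|$ is chosen small enough, \eqref{be} combined with $\E Y_t-1=\Theta(t-\tc)$ forces $\sign(\E Y-1)=\sign(t-\tc)$, so \refL{LSA} applied to $(Y,Z)$ gives $\trho=0$ for $t\le\tc$ (with equality forcing $\eta=0$, hence $\bp=\bp_\tc$) and $\trho=\hrho_{Y,Y^0}=\rho(t)+O(\eta)>0$ for $t>\tc$, establishing~\eqref{radd}. The analogous statements for $\rho_1(t)=\max\{\hrho_t,0\}$ and $\trho_1=\max\{\hrho_Y,0\}$ follow from exactly the same argument with $\hrho$ in place of $\hrho_{Y,Y^0}$, and are in fact simpler since the initial distribution plays no role.

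The main (modest) obstacle is the bookkeeping in the mixture step: verifying that the interpolated family satisfies the hypotheses of \refL{LSB} uniformly in $u\in[0,1]$, and choosing the constant $c$ small enough that the sign of $\E Y-1$ is genuinely controlled by that of $t-\tc$ (so that \refL{LSA} sends $\hrho_{Y,Y^0}$ to the actual survival probability $\trho$, rather than to some spurious negative value). Both are direct adaptations of steps already carried out in the proof of \refT{Tfinal}.
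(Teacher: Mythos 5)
Your proposal matches the paper's proof essentially step for step: reduce to the fixed-point quantities $\hrho_{Y}$ and $\hrho_{Y,Y^0}$ via Lemma~\ref{LSA} and~\eqref{hrhoYY}, use the $\tc$-criticality assumptions to control the sign and size of $\E Y_t - 1$, deduce analyticity from Lemma~\ref{LSB}, and then interpolate between $\bp$ and $\bp_t$ via the mixtures~\eqref{mixed} and integrate~\eqref{drho}. Your explicit computation of $a_1=\E Y_\tc^0\cdot\frac{\ddx}{\ddx t}\hrho_t\big|_{t=\tc}>0$ is a slightly more detailed justification of $a_1>0$ than the paper gives (the paper just asserts~\eqref{rexp} follows from analyticity, implicitly using $\rho(t)=\Theta(t-\tc)$ for $t>\tc$), but it is correct and in the same spirit.
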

\begin{proof}
We argue as in the proof of \refT{Tfinal}. In particular, we may assume that 
$(Y,Z)\in\kk$ and $(Y^0,Z^0)\in\kko$ for some $R,M,k_1,k_2,\gd$. 
We shall also assume that $c \le 1$.

We consider only $t$ with $ |t-\tc|\le\eps_0$; we may assume
that $\eps_0$ is small enough that this implies 
$t\in(t_0,t_1)$, and, by \eqref{means:Y}, that $\E Y^0_t>0$, and that
\begin{equation}\label{expYt1}
\sign(\E Y_t-1)=\sign(t-\tc) 
\qquad \text{ and } \qquad 
|\E Y_t-1| = \Theta(|t-\tc|) < \ccLSA .
\end{equation}
By~\eqref{rhoY:def} and \refL{LSA} and it follows that $\rho_1(t)=\rhoYt$ is zero for~$\tc-\eps_0\le t\le \tc$, and positive for~$\tc< t\le \tc+\eps_0$. 
Since~$\P(Y_t^0 \ge 1) >0$, now~\eqref{rhoYY:def} and~\eqref{rhoYY} imply an analogous statement for $\rho(t)=\rhoYYt$. 
Lemmas~\ref{LSA} and~\ref{LSB} also imply that 
\begin{equation}\label{raddYYt}
\rho_1(t)=\hrho\Yt \qquad \text{ and } \qquad \rho(t)=\hrho\YYt
\end{equation}
are both analytic for~$\tc\le t\le \tc+\eps_0$.
Hence~\eqref{rexp} holds if~$\eps_0$ is sufficiently small.

Next, for a branching process of type $(t,\eta)$, 
by~\eqref{be} we have $|\E Y_t - \E Y|=O(\eta)$.
Since~$\eta\le c|t-\tc|$, it follows from~\eqref{expYt1} that if~$c$ is small enough, then
$\sign(\E Y-1)=\sign(t-\tc)$.  
Moreover, since $\eta\le c|t-\tc|\le|t-\tc|\le\eps_0$, using~\eqref{expYt1} we also have $|\E Y-1|  < \ccLSA$ if~$\eps_0$ is small enough. 
Mimicking the above reasoning for $\rho_1(t)$ and $\rho(t)$, 
using~\eqref{rhoYY:def}--\eqref{rhoYY} and \refL{LSA} it follows for $\eta\le c|t-\tc|$ that $\trho_1=\rhoY$ and $\trho=\rhoYY$
satisfy $\trho_1=\trho=0$ if~$\tc-\eps_0\le t\le \tc$, and
$\trho_1,\trho>0$ if~$\tc< t\le \tc+\eps_0$; furthermore, 
\begin{equation}\label{raddYY}
\qquad \trho_1=\hrho_{Y} \qquad \text{ and } \qquad \trho=\hrho_{\YY}
\end{equation}
for~$\tc\le t\le \tc+\eps_0$ and $\eta\le c|t-\tc|$. 

Finally, we consider the interpolating branching process family 
$(\bY_u,\bZ_u,\bY^0_u,\bZ^0_u)_{u\in [0,1]}$ defined by~\eqref{mixed}, 
for which, as noted in Section~\ref{SStc-critical}, 
\eqref{sjw} holds with $\gl=\eta$ and $I=[0,1]$.
 Note that \eqref{mixe} and 
$\eta\le|t-\tc|\le\eps_0$ imply $|\E \bY_u-1|<\ccLSA$ 
provided~$\eps_0$ is small enough. 
Integrating~\eqref{drho} of \refL{LSB} over $u\in[0,1]$ similarly
to~\eqref{mixint} in the proof of \refT{Tfinal}, 
using the identities~\eqref{raddYYt}--\eqref{raddYY} 
we infer $\trho_1-\rho_1(t) =\hrho_{Y}-\hrho\Yt = O(\eta)$ and $\trho-\rho(t)=\hrho_{\YY}-\hrho\YYt = O(\eta)$ for $\tc \le t\le \tc+\eps_0$ and $\eta\le c|t-\tc|$,
completing the proof.
\end{proof}

\refT{thsurv} immediately implies the key case $p_{\cR}=1$, $K=0$ of Theorem~A.11 of~\cite{RW}, 
used there for the analysis of Achlioptas processes.

\bigskip{\bf Acknowledgement.} 
The last two authors are grateful to Christina Goldschmidt for useful pointers to the local limit theorem literature,
which were helpful for the developing parts of the slightly more involved (large deviation based) point probability analysis contained in an earlier version of~\cite{RW}.

\newcommand\AAP{\emph{Adv. Appl. Probab.} }
\newcommand\JAP{\emph{J. Appl. Probab.} }
\newcommand\JAMS{\emph{J. \AMS} }
\newcommand\MAMS{\emph{Memoirs \AMS} }
\newcommand\PAMS{\emph{Proc. \AMS} }
\newcommand\TAMS{\emph{Trans. \AMS} }
\newcommand\AnnMS{\emph{Ann. Math. Statist.} }
\newcommand\AnnPr{\emph{Ann. Probab.} }
\newcommand\CPC{\emph{Combin. Probab. Comput.} }
\newcommand\JMAA{\emph{J. Math. Anal. Appl.} }
\newcommand\RSA{\emph{Random Struct. Alg.} }
\newcommand\ZW{\emph{Z. Wahrsch. Verw. Gebiete} }
\newcommand\DMTCS{\jour{Discr. Math. Theor. Comput. Sci.} }

\newcommand\AMS{Amer. Math. Soc.}
\newcommand\Springer{Springer-Verlag}
\newcommand\Wiley{Wiley}

\newcommand\vol{\textbf}
\newcommand\jour{\emph}
\newcommand\book{\emph}
\newcommand\inbook{\emph}
\def\no#1#2,{\unskip#2, no. #1,} 
\newcommand\toappear{\unskip, to appear}

\newcommand\arxiv[1]{\texttt{arXiv:#1}}
\newcommand\arXiv{\arxiv}

\def\nobibitem#1\par{}


\small
\bibliographystyle{plain}

\normalsize

\end{document}